\newfont{\msbm}{msbm10 at 11pt}
\newfont{\msbmsm}{msbm10 at 8pt}
\newtheorem{Theo}{Theorem}
\newtheorem{Lemma}[Theo]{Lemma}
\numberwithin{equation}{section}
\begin{document}
\title{The Yule-$\Lambda$ Nested Coalescent: Distribution of the Number of Lineages}
\author{Toni Gui}
\maketitle

\vspace{-.3in}
\begin{abstract}
We study a model of a population with individuals  sampled from different species. The Yule-$\Lambda$ nested coalescent describes the genealogy of the sample when each species merges with another randomly chosen species with a constant rate $c$ and the mergers of individuals in each species follow the $\Lambda$-coalescent. For the Yule-$\Lambda$ nested coalescent with $c<\int_0^1x^{-1}\Lambda(dx)<\infty$, where $\Lambda$ is the measure that characterizes the $\Lambda$-coalescent, we show that under some initial conditions, the distribution of the number of individual lineages belonging to one species converges weakly to the distribution $\nu_c^*$, which is the solution to some recursive distributional equation (RDE) with finite mean. In addition, we show that for some values of $c$, the RDE has another solution with infinite mean.
  \\

Key words: gene tree, nested coalescent, $\Lambda$-coalescent, recursive distributional equation.

\end{abstract}


\section{Introduction}
Coalescent theory is the study of stochastic processes that describe how particles merge over time.
Since particles in coalescent theory are often used to represent individual genes, coalescent processes can help describe the genealogy of a population and trace the common origin of sampled genes. Before formally defining the Yule-$\Lambda$ nested coalescent, we will first introduce the $\Lambda$-coalescent and nested coalescents.

\subsection{The $\Lambda$-coalescent}\label{chap1.2introlambda}

One of the most widely studied stochastic processes of coalescence is Kingman's coalescent \cite{KINGMAN1982235,kingman_1982}. Kingman's coalescent can be used to describe the genealogy of a sample from a population under certain conditions, such as constant population size, neutral evolution and typically small family sizes. But when some of the conditions are not satisfied, a more general coalescent model is needed to describe the genealogy. Kingman's coalescent is a special case of the $\Lambda$-coalescent, which is also known as a coalescent with multiple collisions and was introduced by Pitman in \cite{pitman1999} and Sagitov in \cite{10.2307/3215582}. It was shown in \cite{DURRETT20051628} and \cite{10.1214/17-EJP58} that the genealogy of a population undergoing selection under some appropriate assumptions can be described by the $\Lambda$-coalescent. The $\Lambda$-coalescent can also be used to model populations in which the number of offspring of a random individual can be relatively large. Such models were studied in detail in \cite{10.2307/3215582} and \cite{SCHWEINSBERG2003107}. Some results on the $\Lambda$-coalescents and their applications to theoretical population genetics are surveyed in \cite{berestycki2009recent}. 

We will now formally define the $\Lambda$-coalescent. We can start by defining a process $(\Pi_t^n,t\geqslant 0)$ taking values in the space $\mathcal{P}_n$, which is defined as the set of the partitions of $[n]=\{1,\ldots,n\}$. This process is defined by:
\begin{enumerate}
        \item     Initially $\Pi^n_{0}=\{\{1\},\ldots,\{n\}\}$.
        \item    The merger rate of any given $k$-tuple of blocks when the partition has $b$ blocks is denoted by $\lambda_{b,k}$, and there exists a finite measure $\Lambda$ on the interval $[0,1]$,  such that 
        \begin{equation}\label{posterlambda}
        \lambda_{b,k}=\int_{[0,1]}x^{k-2}(1-x)^{b-k}\Lambda(dx),~2\leqslant k\leqslant b.
        \end{equation}
     \end{enumerate}

The merger rate given in (\ref{posterlambda}) satisfies the recursion 
$\lambda_{b,k}=\lambda_{b+1,k}+\lambda_{b+1,k+1}$. Because this consistency condition holds, there exists a unique in law continuous-time Markov process $(\Pi_t,t\geqslant 0)$ taking values in $\mathcal{P}_\infty$, which is defined as the set of the partitions of $\mathbb{N}^+=\{1,2,3,\ldots\}$, such that the initial state of the process is the partition of $\mathbb{N}^+$ into singletons and the law of $\Pi$, when restricted to $[n]$, equals the law of $\Pi^n$. The law of $\Pi$ is uniquely characterized by the measure $\Lambda$.

Kingman's coalescent is a special case of the $\Lambda$-coalescent in which the measure $\Lambda$ equals a unit mass at 0. If $\Lambda(dx)$ is the Beta$(2-\alpha,\alpha)$ distribution with $0<\alpha<2$, then the resulting coalescent is called a beta coalescent with parameter $\alpha$. Beta coalescents can be used to approximate the genealogy of a population when the offspring distribution has heavy tails \cite{SCHWEINSBERG2003107}.

Suppose $\Lambda(\{0\})=0$. Then the process $(\Pi_t,t\geqslant 0)$ can also be constructed as follows. Consider a Poisson point process on $[0,\infty)\times[0,1]$ with intensity $dt\times x^{-2}\Lambda(dx)$. If $\Pi_{t-}=\{B_1,B_2,\ldots\}$, where the blocks are ranked by their smallest elements, and if $(t,x)$ is a point of the Poisson point process, then by flipping a coin, which has probability $x$ of coming up heads, for each of the blocks in $\Pi_{t-}$ and merging blocks whose coins are heads, we can obtain $\Pi_{t}$. 

Suppose that the finite measure $\Lambda$ is a probability measure; if it is not, it can be normalized to a probability measure. Let $X$ be a random variable on $[0,1]$ with law 
$$P(X\in dx)=\Lambda(dx).$$ 
Let $\lambda_b$ be the rate at which mergers happen when there are $b$ blocks in total. Then
\begin{equation}\label{lambda11}
	\lambda_{b,k}=E\left[X^{k-2}(1-X)^{b-k}\right],~2\leqslant k\leqslant b,
\end{equation}
\begin{equation}\label{lambda22}
	\lambda_{b}=\sum_{k=2}^{b}\binom{b}{k}\lambda_{b,k}=\sum_{k=2}^{b}\binom{b}{k}E\left[X^{k-2}(1-X)^{b-k}\right], ~b\geqslant 2,
\end{equation}
and
\begin{equation}\label{lambda33}
	\lambda_{1}=0.
\end{equation}

For some measures $\Lambda$, there is a positive fraction of singleton blocks in $\Pi_t$ at all times. Such $\Lambda$-coalescents are called coalescents with dust. More formally, suppose that $P(X=1)=0$, and let $E$ denote the event that $\Pi_t$ has some singletons for all $t>0$. Then $P(E)=1$ if and only if $E[1/X]<\infty$ and otherwise $P(E)=0$. For example, the beta coalescent with $0<\alpha<1$ satisfies $E[1/X]<\infty$ and has a positive fraction of singletons for all $t>0$. See \cite{freund_mohle_2017, gnedin2011lambdacoalescents, Mhle2021TheRO} for some further studies of $\Lambda$-coalescents with dust.

\subsection{Nested Coalescents}

A constraint of the $\Lambda$-coalescent is that it is only applicable for approximating the genealogy within a single species, but is not suitable for investigating the inter-species relationships. The challenge of reconstructing species trees from gene trees has received attention, as demonstrated in, for example, \cite{10.2307/2413694}. In recent years, there has been a significant growth in the mathematical theory for estimating species trees, as reviewed in \cite{doi:10.1146/annurev-ecolsys-012121-095340}.

The multispecies coalescent model extends the coalescent model, which is originally only applicable to single populations, to a process on a tree of populations that branches during speciation events \cite{steel2016phylogeny}. In multispecies coalescent models, the species trees are either fixed or drawn from some probability distribution. Nested coalescents were introduced in \cite{blancas2018} to capture the joint dynamics of genes and species and to show a much broader class of Markov models for trees within trees. 
While the gene tree reflects the coalescence of lineages within a species, the species tree describes the coalescence of distinct species.
The nested coalescent describes the genealogy of a population at both the individual and species levels. 

One example of a nested coalescent model is the nested Kingman coalescent, in which both the gene tree and the species tree are given by Kingman's coalescent. That is, each pair of individual lineages belonging to the same species merges independently at rate 1 and each pair of species merges independently at some positive constant rate $c$. It has been shown in \cite{blancas2019} that the number of total lineages in the nested Kingman coalescent decays at rate $O(t^{-2})$ when time  $t\to0$. By solving a McKean-Vlasov equation in \cite{lambert2020coagulation}, it has been shown that the typical number of individual lineages in one species is $O(t^{-1})$ at time $t$ when $t\to0$. Another example of a nested coalescent model is the Yule-Kingman nested coalescent, which is introduced in \cite{YuleKingmanToni}. In this model, the species tree is a Yule tree and individual lineages merge according to Kingman's coalescent.

In this article we study the Yule-$\Lambda$ nested coalescent model.
The species tree is a Yule tree in this model. The Yule process is a process in which each individual gives birth to a new individual independently at some constant rate $c$. It is a standard and commonly used model to describe phylogeny at the species level \cite{steel2016phylogeny}. For the Yule-$\Lambda$ nested coalescent in which the $\Lambda$-coalescent is coalescent with dust, we study the distribution of the number of individual lineages in one species and prove the existence and uniqueness of the distribution $\nu_c^*$ with finite mean that solves some recursive distributional equation. 
We show that the distribution of the number of individual lineages in each species converges weakly yo $\nu_c^*$ under some initial condition. In addition, we prove that for some values of $c$, the recursive distributional equation has another solution with infinite mean.
Some similar results on the Yule-Kingman nested coalescent model are shown in \cite{YuleKingmanToni}.

\begin{figure}[ht]
\centering
\captionsetup{width=.85\linewidth} \includegraphics[width = 80mm]{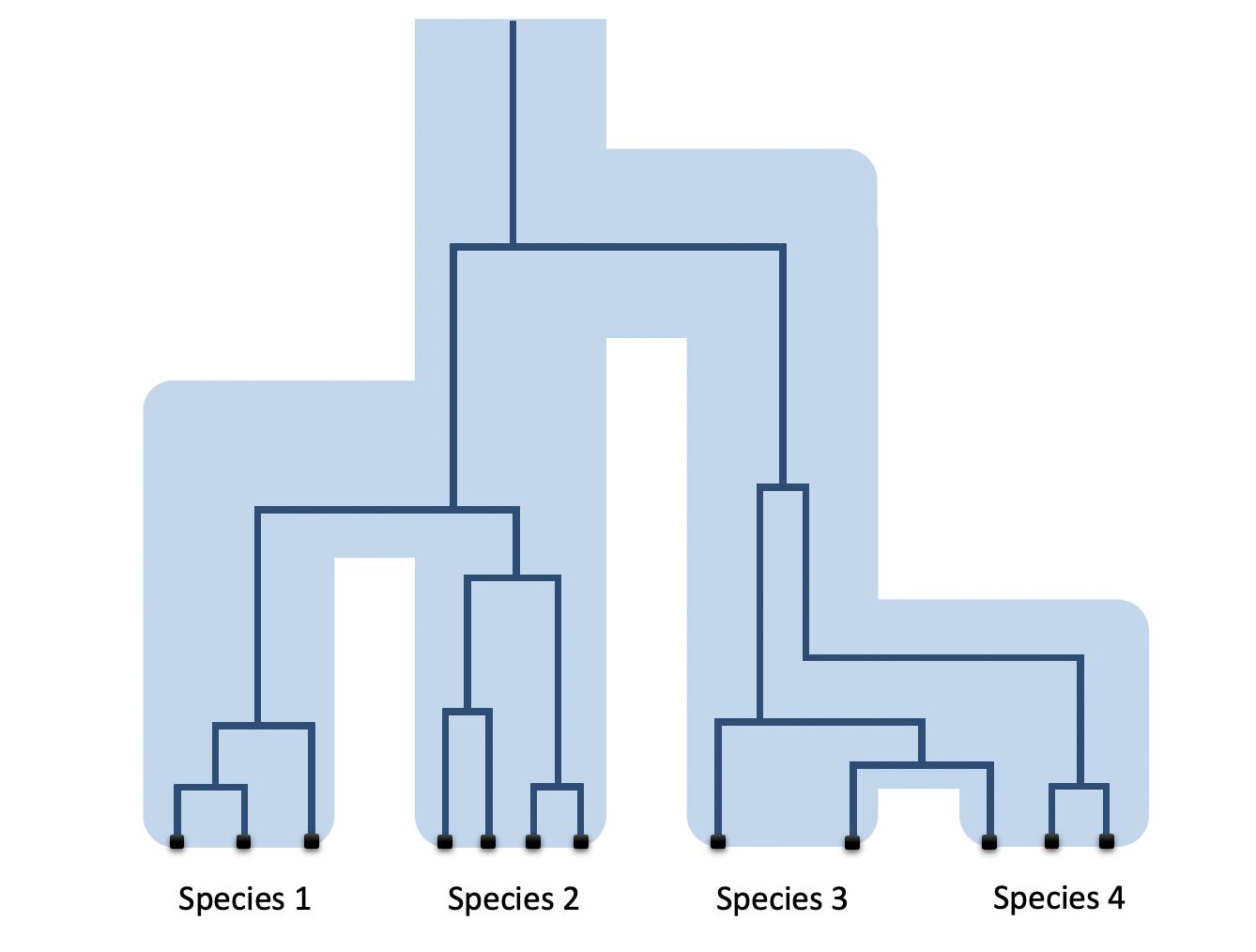}
\caption[A nested coalescent tree which illustrates a model starting from 4 species and 3, 4, 2, and 3 individual lineages in species 1, 2, 3, and 4 respectively.]{(From \cite{YuleKingmanToni}) A nested coalescent tree which illustrates a model starting from 4 species and 3, 4, 2, and 3 individual lineages in species 1, 2, 3, and 4 respectively. The dark lines present a possible ancestral tree for the sampled individual lineages and the light blue tree in the background shows a possible species tree. Only individual lineages belonging to the same species can merge.}
\label{fig.1}
\end{figure}

We will formally define the model and state the main results in the remainder of this section.


\subsection{Definition of the Yule-$\Lambda$ Nested Coalescent Model}\label{sec13}
We consider a nested coalescent model that is similar to  the ones that appear in \cite{blancas2019} and \cite{YuleKingmanToni}.
We start with a sample of $s$ species, and sample $n_k$ individuals from species $k$ ($n_k$ can be infinite).
A continuous-time Markov chain can be defined to formally describe the process. The Markov chain takes its values in the set of labeled partitions of $\{(m,k)\in\mathbb{Z}\times\mathbb{Z}:1\leqslant m\leqslant n_k,1\leqslant k\leqslant s\}$, where every partition block is assigned a label with one of the integers $1,\ldots,s$.
Each label represents one species, and each block corresponds to a lineage. Initially, there are $\sum_{k=1}^s n_k$ singleton blocks, and block $\{(m,k)\}$ is labeled by $k$. There are two types of transitions:\\

\noindent$\textbf{Lineage mergers:}$ When a label has $b$ blocks, any particular set of $k$ blocks with the label may merge independently into a single block with that label at rate given by
$$\lambda_{b,k}=\int_{[0,1]}x^{k-2}(1-x)^{b-k}\Lambda(dx),$$
where $\Lambda(dx)$ is a probability measure on $[0,1]$.\\

\noindent$\textbf{Species mergers:}$ Each label has a constant death rate of $c>0$. If there is more than one species, when label $j$ dies out, all blocks with label $j$ will have their label changed to $i$, which will be chosen from the remaining labels uniformly at random. If there is only one species, its death results in the simultaneous extinction of all blocks.\\

Note that within each species, the lineages merge according to the $\Lambda$-coalescent. The species mergers are the same as the species mergers in \cite{YuleKingmanToni}, where it was observed that if we reverse the direction of the time and redefine the time for the last species in the model to die out as time 0, then the evolutionary process of speciation is a Yule process with rate $c$, and the population has size $s$ when run until the last time. Therefore, we will refer to this model as the nested Yule-$\Lambda$ coalescent.

\subsection{Main Results for the Yule-$\Lambda$ Nested Coalescent}\label{chap1.6introresultYL}
Some of the notations below are similar to or the same as the ones used in \cite{YuleKingmanToni}.
Let $\mathcal{S}$ denote the set of probability distributions on $\mathbb{N}^+\cup\{\infty\}$, and let $\mathcal{S}_{1}$ denote the set of probability distributions on $\mathbb{N}^+$ with finite mean. Let $(\Pi_t,t\geqslant 0)$ be a $\Lambda$-coalescent process. Let $L_n(t)$ denote the number of blocks in $\Pi_t$ restricted to $[n]$. In other words, $L_n(t)$ is the number of lineages surviving to time $t$ in the $\Lambda$-coalescent started with $n$ lineages. Let $G_c:\mathcal{S}\to\mathcal{S}$ be the mapping defined such that $G_c(\mu)$ is the distribution of 
\begin{equation}\label{dist111}
L_{W_{1}+W_{2}}(Y),
\end{equation}
where $W_1$ and $W_2$ have distribution $\mu$, $Y$ is exponentially distributed with rate $c$, and the random variables $Y$, $W_{1}$, $W_{2}$ and the process $\Pi$ are independent. 
Let $G_c^{n}: \mathcal{S}\to\mathcal{S}$ be the map obtained by iterating the map $G_c$ for $n$ times. Let $S(t)$ be the number of species at time $t$. Notice that $S(0)=s$, and the process $(S(t), t\geqslant0)$ 
is a pure death process.  
Let $N_{k}(t)$ denote the number of individuals in species $k$ at time $t$, $1\leqslant k\leqslant s$. Notice that $N_{k}(t)=0$ if species $k$ dies out before time $t$, and there are $S(t)$ species that have a non-zero number of individuals at time $t$. For a fixed $m$, let $\tau_{m}^{s}=\sup\{t:S(t)=m\}$, which is the first time when the number of species reaches $m-1$ starting from $s$. Denote the left limit $\lim_{t\to(\tau_{m}^{s})^{-}}N_k(t)$ by $N_{k}((\tau_{m}^{s})^{-})$, which represents the number of individuals in species $k$ just before the merger that reduces the number of species from $m$ to $m-1$. Let $\{k_l\}_{l=1}^m\subset[s]$ denote the species that survive to $(\tau_{m}^{s})^{-}$. We can show the following results.

\begin{Theo}\label{theo4unilambda}
If $c<E[1/X]<\infty$, then there exists a unique distribution $\nu_c^*$ in $\mathcal{S}_1$ such that 
\begin{equation}\label{theoremequ1}
	G_c(\nu_c^*)=\nu_c^*.
\end{equation}
\end{Theo}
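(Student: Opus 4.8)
The plan is to prove existence and uniqueness separately, with the existence coming from a fixed-point/contraction argument on an appropriate complete metric space of distributions and uniqueness following from a coupling estimate that exploits the strict inequality $c < E[1/X]$.

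For existence, I would first restrict attention to the subset $\mathcal{S}_1 \subset \mathcal{S}$ of distributions on $\mathbb{N}^+$ with finite mean and check that $G_c$ maps a suitable subset of $\mathcal{S}_1$ into itself. The key computation here is a one-step mean estimate: if $W_1, W_2$ have common distribution $\mu$ with mean $m$, then $W_1 + W_2$ has mean $2m$, and I need to control $E[L_{W_1+W_2}(Y)]$ where $Y \sim \mathrm{Exp}(c)$. The intuition is that in a $\Lambda$-coalescent with dust (i.e.\ $E[1/X] < \infty$), the number of lineages $L_n(t)$ decreases, but not too fast — there is a positive asymptotic fraction of lineages surviving — so roughly $E[L_n(Y)] \approx n \cdot E[\text{surviving fraction over an Exp}(c) \text{ time}]$. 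I would make this precise using the known asymptotics for coalescents with dust: the number of blocks $L_n(t)/n$ converges a.s.\ and in $L^1$ to a positive limit, and more quantitatively one can compute or bound $E[L_n(t)]$ via the block-counting process, whose decay rate is governed by $\lambda_{b,k}$. The upshot should be a bound of the form $E[L_{W_1+W_2}(Y)] \le 2 \rho(c)\, m + O(1)$ (or an exact affine relation), where $\rho(c) < 1$ precisely because $c < E[1/X]$ forces the death rate to be dominated by the rate at which a single species sheds half its lineages. Thus the mean map $m \mapsto 2\rho(c) m + C$ has a fixed point $m^* = C/(1 - 2\rho(c))$ provided $2\rho(c) < 1$; I would need to verify that the condition $c < E[1/X]$ indeed yields $2\rho(c) < 1$, and if the naive bound only gives $\rho(c) < 1$, I would instead iterate and use tightness: the sequence $G_c^n(\mu_0)$ started from, say, a point mass, has uniformly bounded means, hence is tight in $\mathcal{S}$, and any subsequential limit $\nu$ satisfies $G_c(\nu) = \nu$ by continuity of $G_c$ with respect to weak convergence (which I would check, noting that $L_n$ is monotone in $n$ and $Y$ is a.s.\ finite). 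One must then rule out that the limit escapes to $\infty$ with positive probability, which again follows from the uniform mean bound via Markov's inequality, so $\nu \in \mathcal{S}_1$.

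For uniqueness, suppose $\mu$ and $\nu$ are two fixed points in $\mathcal{S}_1$. I would set up a coupling: build $(W_1, W_2)$ with law $\mu$ and $(W_1', W_2')$ with law $\nu$ on a common space, run the same $\Lambda$-coalescent $\Pi$ and the same $Y \sim \mathrm{Exp}(c)$, and compare $L_{W_1+W_2}(Y)$ with $L_{W_1'+W_2'}(Y)$. Using the monotonicity of $n \mapsto L_n(t)$ and a minimal coupling, the expected discrepancy after one step should contract: $d(G_c(\mu), G_c(\nu)) \le \theta\, d(\mu,\nu)$ for some $\theta < 1$ in a metric like the Wasserstein-$1$ distance $d_W$ on $\mathcal{S}_1$, where again $\theta$ is essentially $2\rho(c)$ and the strict inequality $c < E[1/X]$ is what makes $\theta < 1$. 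Since $(\mathcal{S}_1, d_W)$ restricted to distributions with mean $\le m^*$ is complete, the Banach fixed-point theorem then gives both existence and uniqueness at once, which would be the cleanest route if the contraction estimate holds on the nose.

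The main obstacle I anticipate is the quantitative control of $E[L_n(Y)]$ — specifically, proving the contraction factor is strictly less than $1$ rather than merely $\le 1$. The block-counting process of a $\Lambda$-coalescent with dust does not have a simple closed form, and one needs a sharp enough handle on how many lineages survive a random $\mathrm{Exp}(c)$ amount of time. I expect the argument to hinge on comparing the rate $c$ at which the "$W_1 + W_2 \rightsquigarrow$ one species" doubling-then-halving happens against the effective per-lineage coalescence rate, and the hypothesis $c < E[1/X]$ should be exactly the threshold separating the finite-mean regime from the infinite-mean one (consistent with the paper's later claim that for some $c$ above related thresholds a second, infinite-mean solution appears). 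A secondary technical point is justifying interchange of limits/expectations (dominated convergence for $L_n(Y)$, continuity of $G_c$), but these should be routine given that $L_n(t) \le n$ and $Y < \infty$ a.s.
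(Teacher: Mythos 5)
Your existence argument is in the right spirit and closely parallels the paper: the paper's Lemma~\ref{lemma17uniquep1} shows that $b_i := E[L_{i+1}(Y)]-E[L_i(Y)]$ converges to $c/(E[1/X]+c)$, which is $<1/2$ precisely when $c<E[1/X]$; Lemma~\ref{lemma18nup1} then iterates $G_c$ from $\delta_1$, uses monotonicity of $G_c$ to get a pointwise-increasing sequence of iterates, and derives the affine bound $E[X_{n+1}]<(\tfrac32+\delta)I+(1-2\delta)E[X_n]$ to conclude the means stay bounded, hence the limit lies in $\mathcal{S}_1$. Your $\rho(c)=c/(c+E[1/X])$ is exactly this quantity, and your fallback of tightness from uniform mean bounds is a slight variant that also works, so the existence half is essentially the paper's route.

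The uniqueness half of your proposal, however, has a genuine gap. Your plan is a Wasserstein-$1$ contraction: couple the inputs, use the same $\Lambda$-coalescent and the same $Y$, and show $d_W(G_c(\mu),G_c(\nu))\le \theta\, d_W(\mu,\nu)$ with $\theta<1$. But the one-step contraction factor you would get this way is governed by $2\sup_{i}b_i$, not by $2\lim_{i\to\infty}b_i$. Lemma~\ref{lemma17uniquep1} only gives $b_i<\tfrac12-\delta$ \emph{for all large $i$}; for small $i$ the increments can exceed $1/2$. For instance $b_1=P(L_2(Y)=2)=c/(\lambda_2+c)=c/(1+c)$ (recall $\lambda_2=1$ since $\Lambda$ is a probability measure), which is $>1/2$ as soon as $c>1$, and there are admissible $\Lambda$ with $E[1/X]$ arbitrarily large so that $1<c<E[1/X]$ is possible. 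Consequently, for distributions that disagree near the bottom of $\mathbb{N}^+$, the comparison $E[L_n(Y)]-E[L_m(Y)]=\sum_{i=m}^{n-1}b_i$ can produce a factor $\ge 1$ after accounting for the doubling $W_1+W_2$, and the Banach fixed-point argument does not close. This is precisely why the paper does not argue by contraction: its uniqueness proof instead derives a functional equation for the probability generating function of any fixed point (Lemma~\ref{lemmapgflambda}), establishes strict monotonicity and a Lipschitz-in-$c$ estimate for the fixed-point family $\nu^*_c=\lim_nG_c^n(\delta_1)$ (Lemmas~\ref{lemma21now}--\ref{lemma23cont}), and then, assuming a second finite-mean fixed point $\tilde\nu_c^*\succeq\nu_c^*$, carefully tunes auxiliary parameters $c_1,c_2,c_3$ and analyzes the location of the maximizer of $R_{c_2}-\tilde R_c$ (or $R_{c_2}-\tilde R_{c_3}$) in the PGF equation to reach a contradiction via a two-case split. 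That intermediate machinery is not something your coupling sketch replaces; the strict inequality $c<E[1/X]$ enters the paper's uniqueness proof through the finiteness of $R_c'(1)$ in equation~(\ref{exp9}) and the divergence $\lim_{c_1\to E[1/X]}R_{c_1}'(1)=\infty$, not through a uniform one-step contraction.
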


Note that if $P(X=1)=0$, then the Dirac measure on $\{\infty\}$ is a trivial solution to (\ref{theoremequ1}), because the $\Lambda$-coalescent has dust, and therefore if there are infinitely many blocks at time 0, the number of blocks remains infinite at all times. Therefore, the solution to (\ref{theoremequ1}) in $\mathcal{S}$ is not always unique for the Yule-$\Lambda$ nested coalescent model, which is different from the result proved in \cite{YuleKingmanToni} for the Yule-Kingman nested coalescent model.

\begin{Theo}
\label{Theoexistinflambda}
Suppose that $E[1/X]<\infty$ and $m\in\mathbb{N}^+$ is fixed. If $c<E[1/X]$ and $N_k(0)=1$ for all $k\in\{1,\ldots,s\}$, then the random variables $N_{k_l}((\tau_{m}^{s})^{-})$, $l\in\{1,\ldots,m\}$ are asymptotically independent as $s\to\infty$, and 
$$N_{k_l}((\tau_{m}^{s})^{-})\rightarrow_{d}\nu_c^*,~as~s\rightarrow\infty, ~\forall l\in\{1,\ldots,m\}.$$
If $N_k(0)=\infty$ for all $k\in\{1,\ldots,s\}$, then the random variables $N_{k_l}((\tau_{m}^{s})^{-})$, $l\in\{1,\ldots,m\}$ are asymptotically independent as $s\to\infty$, and there exists a distribution $\nu_c'\in\mathcal{S}$ such that
$$N_{k_l}((\tau_{m}^{s})^{-})\rightarrow_{d}\nu_c',~as~s\rightarrow\infty, ~\forall l\in\{1,\ldots,m\}.$$
We have $\nu_c'(\{\infty\})>0$ if $c>P(X=1)$, and $\nu_c'(\{\infty\})=0$ if $c\leqslant P(X=1)$. In addition, if $P(X=1)=0$, then $\nu_c'(\{\infty\})=1$. 
\end{Theo}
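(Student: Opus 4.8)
The plan is to replace the coalescent-tree picture by an exact Markovian bookkeeping at the level of species labels, which makes a recursive distributional structure visible. Since each label carries an independent rate-$c$ death clock, the death times $\delta_1,\dots,\delta_s$ are i.i.d.\ $\mathrm{Exp}(c)$; hence $\tau_m^s=\delta_{(s-m+1)}$ is an order statistic, the species $k_1,\dots,k_m$ surviving to $(\tau_m^s)^-$ are those with the $m$ largest death times, and $\tau_m^s\to\infty$ while $\min_l\delta_{k_l}\to\infty$ almost surely as $s\to\infty$. Moreover, for any living label $k$ the total rate at which another label dies into $k$ is $\sum_j c/(S(t)-1)=c$ at every time $t<\delta_k$, independently of how many species are alive. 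So a label, followed forward, starts with $N(0)$ individuals, receives ``injections'' at the epochs of a rate-$c$ Poisson process — each injection adding the terminal individual count of the absorbed label — and between injections its count is reduced by the $\Lambda$-coalescent. Writing $\mathcal A(\sigma)$ for the law of this count at time $\sigma^-$ for a label alive throughout $[0,\sigma)$, the label absorbed at a time $u$ contributes a copy with law $\mathcal A(u)$ (independent of the rest only in the $s\to\infty$ limit), so $\mathcal A$ obeys a self-referential recursion, and $N_{k_l}((\tau_m^s)^-)$ should converge in law to the long-time limit $\mathcal A(\infty)$.

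For the asymptotic independence I would condition on the survivor set and on the partition of the $s$ labels into the $m$ blocks recording which $k_l$ each label is ultimately absorbed into. Exchangeability of the receiver choices makes the $m$ resulting accumulation genealogies (with their $\Lambda$-coalescents) conditionally independent given the block sizes, and each block size tends to infinity; so it is enough to prove the one-block marginal converges, i.e.\ that the count at the root of a single accumulation genealogy with $J\to\infty$ leaves, run for time $(\tau_m^s)^-\to\infty$, converges. When $N(0)=1$, the hypothesis $c<E[1/X]$ gives tightness: the mean $f(\sigma)=E[\mathcal A(\sigma)]$ satisfies $f(\sigma)=1+c\int_0^\sigma f(u)\,du-\int_0^\sigma E[g(L(u))]\,du$, where $g(b)$ is the expected downward drift of the block count, and a computation with the expression \eqref{lambda11} for $\lambda_{b,k}$ shows $g(b)/b\to E[1/X]$; hence $f'\le(c-E[1/X]+\eta)f+C_\eta$ for a suitable $\eta>0$ and large argument, so $\sup_\sigma f(\sigma)<\infty$. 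Tightness yields subsequential limits in $\mathcal S$; isolating the last injection before time $\sigma$ (which occurs at $\sigma-Y$ with $Y\sim\mathrm{Exp}(c)$) gives, up to the vanishing dependence correction, $\mathcal A(\sigma)\overset{d}{=}L_{\mathcal A(\sigma-Y)+\mathcal A'(\sigma-Y)}(Y)$, so every subsequential limit is a fixed point of $G_c$ with finite mean, hence equals $\nu_c^*$ by Theorem~\ref{theo4unilambda}. Therefore $\mathcal A(\sigma)\to_d\nu_c^*$ and $N_{k_l}((\tau_m^s)^-)\to_d\nu_c^*$.

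For $N(0)=\infty$ the state space $\mathbb N^+\cup\{\infty\}$ is compact, so subsequential limits exist for free, and the last-injection identity again shows each is a fixed point $\rho$ of $G_c$. Its atom $q=\rho(\{\infty\})$ must satisfy $q=\bigl(1-(1-q)^2\bigr)\cdot\tfrac{c}{c+P(X=1)}$ — the first factor being the probability that at least one of the two merged parts is infinite, the second the probability that no total-coalescence event (an atom of $\Lambda$ at $1$, which on an infinite configuration occurs at rate $P(X=1)$) falls in the $\mathrm{Exp}(c)$ waiting time before the observation — so $q\in\{0,\,1-P(X=1)/c\}$. If $c\le P(X=1)$ only $q=0$ lies in $[0,1]$, giving $\rho(\{\infty\})=0$; if $P(X=1)=0$ there are no total-coalescences at all and $\mathcal A(\sigma)=\infty$ for every $\sigma$, so $\nu_c'=\delta_\infty$. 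If $0<P(X=1)<c$ one must rule out $q=0$; here I would show $P(\mathcal A(\sigma)=\infty)\ge 1-P(X=1)/c$ for all $\sigma$ by running the last-injection recursion backward from $P(\mathcal A(0^+)=\infty)=1$, using that $q\mapsto\bigl(1-(1-q)^2\bigr)\tfrac{c}{c+P(X=1)}$ is increasing on $[0,1]$ with attracting fixed point $1-P(X=1)/c$ there, so that $q_\infty=1-P(X=1)/c$. The same monotone-recursion analysis (or a coupling adapted from the proof of Theorem~\ref{theo4unilambda}) identifies a single limiting law $\nu_c'$, so $N_{k_l}((\tau_m^s)^-)\to_d\nu_c'$, with the stated dichotomy for $\nu_c'(\{\infty\})$.

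The hardest parts will be, first, the decorrelation underlying the asymptotic-independence step and the replacement of the genuinely dependent injected counts by independent copies of $\mathcal A(\cdot)$: this needs exchangeability of the receiver choices, the conditioning on the block partition, and a quantitative estimate that the accumulation genealogies of two fixed surviving labels do not interact before a large time. Second, in the case $N(0)=\infty$ with $0<P(X=1)<c$, the lower bound $\nu_c'(\{\infty\})\ge 1-P(X=1)/c$: the naive pathwise domination by the deterministic iterates $G_c^d(\delta_\infty)$ fails, because a total-coalescence can reset a label's count to a finite value — an effect a subtree comparison cannot capture — so this inequality has to be extracted from the self-consistent recursion for $P(\mathcal A(\sigma)=\infty)$ rather than from a monotone coupling.
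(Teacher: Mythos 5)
Your forward-time picture is genuinely different from the paper's. The paper reverses time and views the species process as a Yule tree with $m$ independent subtrees rooted at $u=0$; it then couples $N_{k_l}((\tau_m^s)^-)$ between $G_c^{\bar d+1}(\delta_1)$ (from below, on the high-probability event $A$ that all level-$\bar d$ branchpoints are captured before $\tau_m^s$, using that any node has count $\ge 1$) and $G_c^{\tilde d+1}(\delta_1)$ (from above, on the event $B$ that $\tau_m^s$ precedes the level-$\tilde d$ branchings), and lets $G_c^n(\delta_1)\to\nu_c^*$ do the rest. Asymptotic independence is free since the $m$ subtrees of the backward Yule process are independent. For $N(0)=\infty$ the same sandwich is run with $\delta_\infty$ in place of $\delta_1$, and Lemma~\ref{lemmabefore55} identifies $\nu_c'=\lim_n G_c^n(\delta_\infty)$ and computes $\nu_c'(\{\infty\})$ by iterating the quadratic map.

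Your proposal has two genuine gaps. First, the decorrelation behind the last-injection identity $\mathcal A(\sigma)\overset{d}{=}L_{\mathcal A(\sigma-Y)+\mathcal A'(\sigma-Y)}(Y)$ is acknowledged but not given; in the forward picture the absorbed label's accumulation history and the receiving label's history draw from the same finite pool of $s$ species and share the same global clock, so the ``independent copy'' $\mathcal A'$ is only an approximation, and quantifying its error is exactly the hard content that the paper's backward-Yule construction sidesteps (its $m$ subtrees are literally independent, so the only thing to control is the stopping time $U_m^{s+1}$, which Lemma 11 of the companion paper handles). Second and more seriously, in the $N(0)=\infty$ case ``every subsequential limit is a fixed point of $G_c$'' cannot identify the limit: $G_c$ has multiple fixed points on $\mathcal S$ (Theorem~\ref{Theoconvlambda1} even produces one with infinite mean and no atom at $\infty$), and the finite-mean uniqueness of Theorem~\ref{theo4unilambda} is unavailable. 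Some monotone comparison with the iterates $G_c^n(\delta_\infty)$ is needed to pin the limit to $\nu_c'$, and your closing ``same monotone-recursion analysis identifies a single limiting law'' is asserting precisely the thing that is missing.

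Your stated worry that ``pathwise domination by $G_c^d(\delta_\infty)$ fails because a total-coalescence can reset a label's count to a finite value'' is not correct, and is the opposite of how the sandwich works. On event $A$ you compare the actual root count with the count obtained by \emph{replacing} whatever sits at the level-$\bar d$ nodes with $\infty$ and running the depth-$\bar d$ coalescent; by monotonicity of the $\Lambda$-coalescent in the block count this only increases the root count, giving an upper bound distributed as $G_c^{\bar d+1}(\delta_\infty)$. On event $B$ you extend the (truncated) species tree to a full depth-$\tilde d$ binary tree, put $\infty$ at the new depth-$\tilde d$ leaves, and observe that the extended coalescent, at the positions of the actual leaves, shows counts $\le\infty$; monotonicity again gives a lower bound distributed as $G_c^{\tilde d+1}(\delta_\infty)$. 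A total coalescence below the truncation level only \emph{helps} this lower bound. The inequality $\nu_c'(\{\infty\})\ge 1-P(X=1)/c$ is not obtained from any such coupling; it is a property of the deterministic iterates, proved in Lemma~\ref{lemmabefore55} by showing $\left(G_c^{n+1}(\delta_\infty)\right)(\{\infty\})$ satisfies a quadratic recursion with multiplier $2c/(c+P(X=1))>1$ so the decreasing sequence cannot reach $0$ — which is essentially the computation you propose, just applied to $G_c^n(\delta_\infty)$ rather than to $\mathcal A(\sigma)$.
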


Section 2 is devoted to proving Theorem \ref{theo4unilambda} and Theorem \ref{Theoexistinflambda}.

\begin{Theo}
\label{Theoconvlambda1}
	Suppose that $0<c<E[(1-(1-X)^{1/2})/X^2]$ and $E[1/X]<\infty$. If there exists $p<1$ such that
	\begin{equation}\label{conditionYL11}
n^{-p}E\left[\frac{1-(1-X)^n}{X^2}\right]\to 0,\text{ as }n\to\infty,
\end{equation}
then there exists a distribution $\bar{\nu}_c^*\in\mathcal{S}$ with infinite mean and $\bar{\nu}_c^*(\{\infty\})=0$ such that 
\begin{equation*}\label{theomueq1}
G_c(\bar{\nu}_c^*)=\bar{\nu}_c^*.
\end{equation*}
\end{Theo}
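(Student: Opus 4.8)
The plan is to produce $\bar\nu_c^*$ as a fixed point of $G_c$ trapped, in the stochastic order on $\mathbb{N}^+\cup\{\infty\}$, between a subsolution and a supersolution, and to realize it as the limit of a monotone iteration. Three structural facts drive this. (i) $G_c$ is monotone: if $\mu\preceq\mu'$, couple $W_1\le W_1'$ and $W_2\le W_2'$ and use the consistency of the $\Lambda$-coalescent (which gives the coupling $L_n(t)\le L_{n+1}(t)$) to get $L_{W_1+W_2}(Y)\preceq L_{W_1'+W_2'}(Y)$, i.e.\ $G_c(\mu)\preceq G_c(\mu')$. (ii) $G_c$ is continuous for weak convergence on $\mathcal{S}$; the only delicate point, continuity at the atom $\infty$, follows from the dust hypothesis $E[1/X]<\infty$, since then the $\Lambda$-coalescent does not come down from infinity and $L_n(Y)\to\infty$ in distribution as $n\to\infty$. (iii) Any fixed point $\mu$ of $G_c$ has $\mu(\{\infty\})\in\{0,1\}$: with $q=\mu(\{\infty\})$ one has $G_c(\mu)(\{\infty\})=1-(1-q)^2$, so $q=2q-q^2$, forcing $q\in\{0,1\}$. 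Thus it is enough to exhibit a fixed point of infinite mean which is not $\delta_\infty$.

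The analytic input is a two-sided control of the block-counting process. Put $\phi(\theta)=E[(1-(1-X)^\theta)/X^2]$, so $\phi$ is increasing and concave, $\phi(1)=E[1/X]$, and the hypothesis reads $c<\phi(1/2)$. Pathwise $L_n(t)\succeq\mathrm{Bin}(n,Z_t)$, where, given the driving Poisson point process $\{(s,x)\}$, $Z_t=\prod_{(s,x)\colon s\le t}(1-x)$ is the chance that a given lineage never flips heads by time $t$ (such lineages are singletons, hence counted by $L_n(t)$); by the exponential formula $E[Z_t^\theta]=e^{-t\phi(\theta)}$, hence $E[Z_Y^\theta]=c/(c+\phi(\theta))$ after averaging over $Y$. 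In the reverse direction, a block alive at time $t$ is either a singleton or contains at least two of the original lineages, so $L_n(t)$ is bounded by the number of surviving singletons plus the number of merger events; the merger-event rate from $b$ blocks is $O(\phi(b))$, and condition (\ref{conditionYL11}) then yields $L_n(t)\le nZ_t+C_t\,n^{p}$ with $p<1$. For tail purposes, $L_n(Y)$ thus behaves like $nZ_Y$ up to an error that is lower order on the tail-index scale.

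I then take both barriers to be power laws of index $\gamma:=\phi^{-1}(c)$, which by $c<\phi(1/2)$ satisfies $\gamma<1/2$; in particular $\gamma<1$, so any tail $\asymp k^{-\gamma}$ gives infinite mean and no mass at $\infty$. For the \emph{supersolution} let $\mu_+$ have $\mu_+([k,\infty])=Ak^{-\gamma}$ for $k\ge m_0$. Using $L_{W_1+W_2}(Y)\approx(W_1+W_2)Z_Y$ and the subexponential expansion $P(W_1+W_2\ge m)=2Am^{-\gamma}-Cm^{-2\gamma}+o(m^{-2\gamma})$, the leading $k^{-\gamma}$ terms cancel exactly because $2E[Z_Y^\gamma]=2c/(c+\phi(\gamma))=1$, leaving $-C\,E[Z_Y^{2\gamma}]\,k^{-2\gamma}$; the constant $C$ is strictly positive precisely in the range $\gamma<1/2$, so $G_c(\mu_+)$ has a strictly lighter tail than $\mu_+$, making $\mu_+$ a supersolution. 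For the \emph{subsolution} let $\mu_-$ have $\mu_-([k,\infty])=Ak^{-\gamma}-B'k^{-\gamma'}$ for $k\ge m_0$, with $\gamma<\gamma'<2\gamma$ and $B'>0$ small; then $\mu_-\preceq\mu_+$ (the tails are ordered for large $k$, and the bodies can be chosen to keep the ordering), and under $G_c$ the coefficient of $k^{-\gamma'}$ passes from $-B'$ to $-2B'\,c/(c+\phi(\gamma'))$, which is strictly larger since $\phi(\gamma')>c$; as $\gamma'<2\gamma$ this dominates the $O(k^{-2\gamma})$ contribution, so $G_c(\mu_-)$ has a strictly heavier tail than $\mu_-$, making $\mu_-$ a subsolution. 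Now monotonicity gives $\mu_-\preceq G_c(\mu_-)\preceq\cdots\preceq G_c^n(\mu_-)\preceq G_c^n(\mu_+)\preceq\mu_+$ for every $n$; the increasing, dominated sequence $G_c^n(\mu_-)$ converges weakly to some $\bar\nu_c^*$, which is a fixed point by continuity of $G_c$, has infinite mean since $\bar\nu_c^*\succeq\mu_-$, and satisfies $\bar\nu_c^*(\{\infty\})=0$ since $\bar\nu_c^*\preceq\mu_+$ (or by (iii)).

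The main obstacle is making the two-barrier step rigorous: because everything balances on the nose at index $\gamma=\phi^{-1}(c)$, one must extract the \emph{second}-order behaviour of $G_c$ acting on a power-law tail and control it against the $\Lambda$-coalescent estimates — handling the binomial fluctuations hidden in $L_n(Y)\succeq\mathrm{Bin}(n,Z_Y)$, the $n^{p}$ error term governed by (\ref{conditionYL11}), and the small-$k$ parts of $\mu_\pm$ so that the barrier inequalities hold for \emph{all} $k$. This is precisely where the hypotheses $c<\phi(1/2)$ (forcing $\gamma<1/2$, hence $C>0$, so that the power law is a genuine supersolution) and (\ref{conditionYL11}) are genuinely used; without the first, at index $\gamma\ge 1/2$ the correction has the wrong sign and one cannot produce a supersolution dominating a subsolution of infinite mean.
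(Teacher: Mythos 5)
Your high-level plan matches the paper's: both trap a fixed point between a sub- and super-solution with power-law tails at index $\alpha_c = \phi^{-1}(c)$ (the paper's notation for your $\gamma$), then iterate $G_c$ monotonically. The paper's Section~3 proves exactly the statements $G_c(\mu_0)\succeq\mu_0$ (Lemma~\ref{lemmaincrease}) and $G_c(\nu_0)\preceq\nu_0$ (Lemma~\ref{lemmadecrease}) and then combines them as you describe. However, the execution is substantially different, and the part you flag as ``the main obstacle'' is precisely where the paper does all of its work.

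Rather than expanding tails directly, the paper works \emph{algebraically} with probability generating functions built from $1-(1-x)^{\alpha}$ for two exponents $\alpha_c$ and $\beta\in(\alpha_c,2\alpha_c\wedge 1)$. The key machinery is the PGF fixed-point equation~\eqref{PGF} and Lemma~\ref{lemmasrfunction}, which converts~\eqref{SRPGF} into explicit coefficient relations, plus Lemma~\ref{uniqueulemma}, which identifies when a sequence $\{r_n\}$ equals $P(L_T(Y)=n)$. With these, the sub-/super-solution inequalities become verifiable coefficient-by-coefficient using the Taylor coefficients $\gamma_{\alpha,n}$ of $1-(1-x)^\alpha$ and their recursions. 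This sidesteps the need for second-order subexponential asymptotics of convolutions of regularly varying tails, which is what your sketch requires but does not carry out. Your heuristic computation via the frozen-singleton fraction $Z_Y$, the identity $E[Z_Y^\theta]=c/(c+\phi(\theta))$, and the cancellation $2E[Z_Y^{\alpha_c}]=1$ is a clean and correct way to see \emph{why} the tail index must be $\alpha_c$, but turning ``$L_n(Y)\approx nZ_Y + O(n^p)$ plus a subexponential expansion'' into a rigorous sign determination on the $k^{-2\alpha_c}$ scale, uniformly in the body of the distribution and at all $k$, is exactly the work the paper invests Lemmas~\ref{lemmasrfunction}--\ref{lemmadecrease} in. So as it stands your proposal is an honest and well-motivated sketch of the same strategy, not an alternative complete proof.

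One concrete error: your point~(iii), that any fixed point has $\mu(\{\infty\})\in\{0,1\}$, is false in general. The correct relation is $G_c(\mu)(\{\infty\}) = \big(1-(1-q)^2\big)\cdot\frac{c}{P(X=1)+c}$ (as in Lemma~\ref{lemmabefore55}), so when $P(X=1)>0$ one can get $q\in(0,1)$, and Theorem~\ref{Theoconvlambda1} does \emph{not} assume $P(X=1)=0$. Your claim~(iii) implicitly assumes $P(L_\infty(Y)=\infty)=1$, which requires $P(X=1)=0$. This is harmless to your argument since you also obtain $\bar\nu_c^*(\{\infty\})=0$ from $\bar\nu_c^*\preceq\mu_+$, but the parenthetical ``or by~(iii)'' should be deleted.
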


Recall from section \ref{chap1.2introlambda} that the beta coalescent is the $\Lambda$-coalescent in which $\Lambda(dx)$ 
is the Beta$(2-\alpha,\alpha)$ distribution with $0<\alpha<2$. For $\alpha\in(0,1)$, we have
\begin{align*}
&\int_0^1\frac{1-(1-x)^n}{x^{2}}x^{1-\alpha}(1-x)^{\alpha-1}dx\\
&\quad\quad\leqslant \left(\frac{1}{2}\right)^{\alpha-1}\int_0^{1/2}(1-(1-x)^n)x^{-1-\alpha}dx 	+ \left(\frac{1}{2}\right)^{-1-\alpha}\int_{1/2}^1(1-x)^{\alpha-1}dx\\
&\quad\quad =-\left(\frac{1}{2}\right)^{\alpha-1}\frac{1}{\alpha}\left[(1-(1-x)^n)x^{-\alpha})\right]_{0}^{1/2}+\left(\frac{1}{2}\right)^{\alpha-1}\frac{1}{\alpha}\int_0^{1/2}n(1-x)^{n-1}x^{-\alpha}dx+\frac{2}{\alpha}\\
&\quad\quad\leqslant \left(\frac{1}{2}\right)^{\alpha-1}\frac{1}{\alpha}\int_0^{1}n(1-x)^{n-1}x^{-\alpha}dx+\frac{2}{\alpha}\\
&\quad\quad = \left(\frac{1}{2}\right)^{\alpha-1}\frac{n}{\alpha}\frac{\Gamma(1-\alpha)\Gamma(n)}{\Gamma(n+1-\alpha)}+\frac{2}{\alpha}.
\end{align*}
It then follows as a consequence of Stirling's formula that
\begin{align*}
n^{-\alpha}\int_0^1\frac{1-(1-x)^n}{x^{2}}\Lambda(dx)=O(1),~\text{as } n\to\infty. 	
\end{align*}
Therefore, the condition (\ref{conditionYL11}) is satisfied for the beta coalescent with $0<\alpha<1$.

Section 3 is devoted to proving Theorem \ref{Theoconvlambda1}.

Theorem \ref{Theoexistinflambda} shows the convergence of the number of individual lineages belonging to each surviving species for certain initial conditions. Theorem \ref{Theoconvlambda1} indicates that the picture is even more complicated compared with the Yule-Kingman nested coalescent because under certain conditions, there exists another solution to (\ref{theoremequ1}) with no mass at infinity other than the solution in Theorem~\ref{theo4unilambda}.

\section{Existence and Uniqueness of the Solution on $\mathcal{S}_1$ to the RDE (\ref{theoremequ1})}
Recall from section \ref{chap1.2introlambda} that $\Lambda$ is a probability measure that uniquely characterizes the law of the $\Lambda$-coalescent and $X$ is a random variable on $[0,1]$ with law $P(X\in dx)=\Lambda(dx)$. Recall from section \ref{chap1.6introresultYL} that $L_n(t)$ is the number of lineages surviving to time $t$ in the $\Lambda$-coalescent started with $n$ lineages and $Y$ is an exponential random variable with rate $c$. 
\subsection{Existence of the Solution to the RDE}

\begin{Lemma}\label{lemma17uniquep1}
	Suppose $E[1/X]<\infty$. Let $b_i=E[L_{i+1}(Y)-L_i(Y)]$. Then 
	$$\lim_{i\to\infty}b_i=\frac{c}{E\left[\frac{1}{X}\right]+c}.$$
\end{Lemma}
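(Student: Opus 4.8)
The plan is to give $b_i$ a probabilistic meaning and reduce the lemma to the distributional limit of the first-merger time of a tagged lineage. Since $L_n(t)$ counts the blocks of one fixed $\Lambda$-coalescent $\Pi_t$ restricted to $[n]$, and $\Pi_t|_{[i]}$ is obtained from $\Pi_t|_{[i+1]}$ by deleting $i+1$ from its block, the number of blocks decreases by exactly one when $\{i+1\}$ is itself a block of $\Pi_t|_{[i+1]}$; hence
\[
L_{i+1}(t)-L_i(t)=\mathbf{1}\bigl\{\{i+1\}\text{ is a singleton of }\Pi_t|_{[i+1]}\bigr\}.
\]
Let $T_i$ be the first time that $i+1$ shares a block with some $j\in[i]$. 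Since $Y\sim\mathrm{Exp}(c)$ is independent of $\Pi$,
\[
b_i=P\bigl(\{i+1\}\text{ is a singleton of }\Pi_Y|_{[i+1]}\bigr)=P(T_i>Y)=1-E\bigl[e^{-cT_i}\bigr],
\]
so, because $x\mapsto e^{-cx}$ is bounded and continuous, it suffices to show that $T_i$ converges in distribution to an exponential variable with rate $E[1/X]$.

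I would establish this by a tagged-lineage argument. By exchangeability of the $\Lambda$-coalescent on $[i+1]$, $T_i$ has the same law as $T^{(i+1)}$, the first time the block $\{1\}$ is absorbed into a larger block in $\Pi|_{[i+1]}$. Using the consistency of restrictions together with the Poissonian (paintbox) construction of $\Pi$ --- available since $E[1/X]<\infty$ forces $\Lambda(\{0\})=0$ --- the times $T^{(i+1)}$ are non-increasing in $i$ and bounded below by $T^{(\infty)}$, the corresponding first-merger time of $\{1\}$ in the coalescent on all of $\mathbb{N}^+$. Moreover, if at time $T^{(\infty)}$ the block $\{1\}$ merges with a block whose least element is $m$, then $\{1\}$ is already absorbed at the same instant in $\Pi|_{[i+1]}$ whenever $i+1\ge m$, so $T^{(i+1)}=T^{(\infty)}$ for all large $i$ almost surely. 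Bounded convergence then gives $E[e^{-cT_i}]=E[e^{-cT^{(i+1)}}]\to E[e^{-cT^{(\infty)}}]$.

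It remains to identify the law of $T^{(\infty)}$. While $\{1\}$ is still a singleton and the coalescent has $b$ blocks in total, $\{1\}$ is absorbed at rate $\sum_{k=2}^{b}\binom{b-1}{k-1}\lambda_{b,k}=\int_{[0,1]}x^{-1}\bigl(1-(1-x)^{b-1}\bigr)\Lambda(dx)$. Because $E[1/X]<\infty$ the coalescent has dust, so infinitely many singleton blocks are present at every positive time up to the first total-coalescence atom, and $\{1\}$ remains a singleton precisely until $T^{(\infty)}$, which occurs no later than that atom; hence throughout the life of the singleton $\{1\}$ the effective block count is infinite and the absorption rate is the constant $\int_{[0,1]}x^{-1}\Lambda(dx)=E[1/X]$. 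Therefore $T^{(\infty)}\sim\mathrm{Exp}(E[1/X])$, $E[e^{-cT^{(\infty)}}]=E[1/X]/(c+E[1/X])$, and
\[
\lim_{i\to\infty}b_i=1-\frac{E[1/X]}{c+E[1/X]}=\frac{c}{E[1/X]+c}.
\]

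The step I expect to require the most care is the dust argument, especially when $\Lambda$ has an atom at $1$: one must check that while the tagged lineage is a singleton there really are infinitely many blocks, so that the absorption rate equals $E[1/X]$ exactly (and not merely in the limit $b\to\infty$), and that this singleton is absorbed no later than any ``everything coalesces at once'' event. A clean way around this is to split off the rate-$\Lambda(\{1\})$ Poisson process of total-coalescence events, carry out the tagged-lineage computation for the coalescent driven by $\Lambda$ restricted to $[0,1)$ --- which has dust because $\int_{[0,1)}x^{-1}\Lambda(dx)<\infty$ --- and observe that the two rates $\Lambda(\{1\})$ and $E[1/X]-\Lambda(\{1\})$ recombine to $E[1/X]$, leaving the limit unchanged.
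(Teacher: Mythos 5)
Your proof is correct, but it takes a genuinely different route from the paper's, and the comparison is worth recording. The paper and you both reduce the lemma to studying the first time the tagged block $\{i+1\}$ joins a block meeting $[i]$, writing $b_i=P(Y<T_{i+1})$, and both identify the limiting exponential rate $E[1/X]$ by observing that while a tagged lineage is a singleton the coalescent has infinitely many blocks (dust), so the absorption rate is exactly $\int_0^1 x\,x^{-2}\Lambda(dx)=E[1/X]$. After that the two arguments diverge. The paper squeezes: it lower bounds $b_i$ by $P(Y<T_{i+1}^*)=c/(E[1/X]+c)$, where $T_{i+1}^*$ is the first time the block of $i+1$ is non-singleton in the full coalescent, and then gives an upper bound $P(Y<T_{i+1}^*)+3\varepsilon$ via an explicit multi-parameter estimate (choosing a time $t^*$, a block count $m^*$, conditioning on the no-total-coalescence event $Z^C$, and using the decay of the rate $\chi_m$ of mergers that avoid the first $m$ blocks), appealing to Kingman's and Pitman's results on the singleton frequency along the way. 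You instead exploit the monotone coupling of the restrictions $\Pi|_{[i+1]}$ under the Poisson/paintbox construction: the tagged first-merger times $T^{(i+1)}$ decrease to $T^{(\infty)}$ and in fact equal it for all $i$ past a random finite threshold (once $i+1$ exceeds the least label in the block that first absorbs $1$ in $\Pi$), so bounded convergence of $e^{-cT^{(i+1)}}$ yields the result with no $\varepsilon$-bookkeeping. Your route is shorter and more transparent; it also isolates the only place requiring real care, namely that the tagged lineage sees infinitely many other blocks throughout its singleton lifetime even when $\Lambda(\{1\})>0$, which you handle by splitting off the rate-$\Lambda(\{1\})$ total-coalescence process. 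The paper's route has the mild advantage of being self-contained in the sense of not requiring the consistent coupling of restrictions to be spelled out, but the bound it produces is only $o(1)$, not the exact identification of the limiting law of $T_{i+1}$ that you obtain.
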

\begin{proof} 
Consider the $\Lambda$-coalescent process $(\Pi_t,t\geqslant 0)$ starting from the partition of $\mathbb{N}^+$ into singletons. Let $T_{i}$ be the first time that the block containing the element $i$ merges with some block containing one of the elements $1,2,\ldots,i-1$. Then
\begin{equation}\label{b1restart}
b_i=P(Y<T_{i+1}).
\end{equation}
Let $T_i^*$ be the first time that the block containing $i$ is a non-singleton. Recall from section \ref{chap1.2introlambda} that $(\Pi_t,t\geqslant 0)$ can be constructed by a Poisson point process on $[0,\infty)\times[0,1]$ with intensity $dt\times x^{-2}\Lambda(dx)$. Note that $T_i^*$ is the first time that we get a head for block $\{i\}$ when flipping an $x$-coin. Therefore $T_i^*$ has an exponential distribution with rate 
$$\int_0^1x\cdot x^{-2}\Lambda(dx)=\int_0^1 x^{-1}\Lambda(dx)=E\left[\frac{1}{X}\right],$$
which implies that
\begin{equation}\label{rateofT*} 
T_i^*\sim \text{Exp}\left(E\left[\frac{1}{X}\right]\right).
\end{equation}
Let $\chi_{m}$ be the rate of mergers involving the block $B_{m+1}$ but not the blocks $B_1,\ldots,B_{m}$. Then 
$$\chi_{m}=\int_0^1x(1-x)^{m}\cdot x^{-2}\Lambda(dx)=\int_0^1 x^{-1}(1-x)^{m}\Lambda(dx).$$
Since $E[1/X]<\infty$, we have $\int_0^1 x^{-1}\Lambda(dx)<\infty$. Therefore, by the Dominated Convergence Theorem,\begin{equation}\label{chi11}
\chi_{m}\to0,~\text{as } m\to\infty.
\end{equation}
Let $\epsilon>0$. We can choose $t^*$ such that
\begin{equation}\label{yt*} 
P(Y\geqslant t^*)< \epsilon.
\end{equation}
By $(\ref{chi11})$, for a fixed $t^*$, we can choose $m^*>1$ such that 
\begin{equation}\label{chi111}
\chi_{m}<\frac{\epsilon}{t^*}, ~\forall m\geqslant m^*.
\end{equation}
Let $Z$ be the event that there is a merger before time $t^*$ that involves all blocks. Note that $Z$ is only possible when $\Lambda$ has an atom at 1, which is the same as $P(X=1)>0$. Let 
$$S(t^*)\coloneqq\lim_{n\to\infty}\frac{1}{n}\sum_{i=1}^n\bold{1}_{\{\{i\} \text{ is singleton at time } t\}},$$
where the existence of the limit follows from Theorem 2 in \cite{KINGMAN1982235}. 
Then 
\begin{equation}\label{ZcSt*1}
\{S(t^*)>0\}\subset Z^C. 
\end{equation}
Since $\int_0^1x^{-1}\Lambda(dx)=E[1/X]<\infty$, it then follows from Lemma 25 and Proposition 26 in \cite{pitman1999} that 
\begin{equation}\label{ZcSt*2}
P(S(t^*)>0|Z^C)=1.
\end{equation} 
Note that $L_n(t)$ is the number of blocks in $\Pi_t$ restricted to $[n]$. By the exchangeability of $\Pi_t$ restricted to $[n]$, we have
\begin{equation}\label{ZcSt*}
P(L_{i}(t^*)\geqslant m^*|S(t^*))\geqslant \sum_{n=m^*}^{i}\binom{i}{n}S(t^*)^n(1-S(t^*))^{i-n}.
\end{equation}
By $(\ref{ZcSt*1})$, $(\ref{ZcSt*2})$ and $(\ref{ZcSt*})$, we have
$$P(L_{i}(t^*)\geqslant m^*|Z^C)=P(L_{i}(t^*)\geqslant m^*|S(t^*)>0)\geqslant E\left[\sum_{n=m^*}^{i}\binom{i}{n}S(t^*)^n(1-S(t^*))^{i-n}\bigg|S(t^*)>0\right].$$
By the Dominated Convergence Theorem, we have
\begin{align*}
	\lim_{i\to\infty}&E\left[\sum_{n=m^*}^{i}\binom{i}{n}S(t^*)^n(1-S(t^*))^{i-n}\bigg|S(t^*)>0\right]\\
	&=1-\sum_{n=0}^{m^*-1}E\left[\lim_{i\to\infty}\binom{i}{n}S(t^*)^n(1-S(t^*))^{i-n}\bigg|S(t^*)>0\right]=1.
\end{align*}
Therefore, for $\epsilon>0$, we can choose $i^*$ such that 
\begin{equation}\label{Ln*111} 
P(L_{i}(t^*)< m^*|Z^C)<\epsilon,~\forall i>i^*.
\end{equation}
Note that
\begin{equation}\label{b1restart2}
T_{i+1}^*\leqslant T_{i+1}.
\end{equation}
By $(\ref{b1restart})$ and $(\ref{b1restart2})$, we have
\begin{align}\label{bbb111}
b_i&=P(Y<T_{i+1})\nonumber\\
&=P(Y<T_{i+1}^*)+P(T_{i+1}^*\leqslant Y<T_{i+1})\nonumber\\
&\leqslant P(Y<T_{i+1}^*)+ P(Y\geqslant t^*)+P(T_{i+1}^*<T_{i+1},T_{i+1}^*<t^*).
\end{align}
Note that $P(T_{i+1}^*<T_{i+1},T_{i+1}^*<t^*)$ increases as $t^*$ increases. Let $\bar{T}$ be the time of the first merger that involves all blocks, which is finite only if $\Lambda$ has an atom at 1. Note that 
the events $Z$ and $ \{\bar{T}<t^*\}$ agree almost surely.
Then
\begin{align}\label{sumZc}
	P(T_{i+1}^*<T_{i+1},T_{i+1}^*<t^*)&\leqslant P(T_{i+1}^*<T_{i+1},T_{i+1}^*<t^*|\bar{T}\geqslant t^*)P(\bar{T}\geqslant t^*)\nonumber\\
	&\leqslant P(T_{i+1}^*<T_{i+1},T_{i+1}^*<t^*|Z^C).
\end{align}
Let $A_t$ be the event that there is a merger involving the block containing the element $i+1$ that does not involve the blocks containing any of the elements $1,2,\ldots,i$ before time $t$.  
Recall from section \ref{chap1.2introlambda} that the rate of mergers involving the block $B_{m+1}$ but not the blocks $B_1,\ldots,B_m$ is $\chi_m$. 
Let $E_{m^*}\sim \text{Exp}(\chi_{m^*})$. If $L_{i}(t^*)\geqslant m^*$, then the number of blocks in $\Pi_t$ restricted to $[i]$ is no less than $m^*$ for all $t\leqslant t^*$, which implies the rate for the block containing the element $i+1$ to merge with the blocks containing any of the elements $1,2,\ldots,i$ is smaller than $\chi_{m^*}$ at any time $t\in[0,t^*]$.
Then 
\begin{equation}\label{chi3333}
P(A_{t^*}|L_{i}(t^*)\geqslant m^*)\leqslant P(E_{m^*}<t^*).
\end{equation}
By $(\ref{chi111})$, we have 
\begin{equation}\label{chi2222}
	 P(E_{m^*}<t^*)<\frac{\epsilon}{t^*}\cdot t^*=\epsilon.
\end{equation} 
Note that
\begin{align*}\label{AtZc}
	P&(T_{i+1}^*<T_{i+1},T_{i+1}^*<t^*|Z^C)\\
	&=P(T_{i+1}^*<T_{i+1},T_{i+1}^*<t^*, L_{i}(t^*)\geqslant m^*|Z^C)+P(T_{i+1}^*<T_{i+1},T_{i+1}^*<t^*, L_{i}(t^*)< m^*|Z^C)\nonumber\\
	&\leqslant P(A_{t^*}|\{L_{i}(t^*)\geqslant m^*\}\cap Z^C)P(L_{i}(t^*)\geqslant m^*|Z^C)+P(L_{i}(t^*)< m^*|Z^C).
\end{align*}
Since $m^*>1$, we have 
\begin{equation*}\label{subset123}
\{L_{i}(t^*)\geqslant m^*\}\subset Z^C.
\end{equation*}
Then
\begin{align}\label{AtZc222}
	P(T_{i+1}^*<T_{i+1},T_{i+1}^*<t^*|Z^C)&\leqslant P(A_{t^*}|L_{i}(t^*)\geqslant m^*)+P(L_{i}(t^*)< m^*|Z^C).
\end{align}
Therefore, by $(\ref{bbb111})$, $(\ref{sumZc})$, $(\ref{chi3333})$ and $(\ref{AtZc222})$, we have 
\begin{align}\label{biupperbound1}
	b_i\leqslant P(Y<T_{i+1}^*)+ P(Y\geqslant t^*)+ P(E_{m^*}<t^*)+P(L_{i}(t^*)< m^*|Z^C).
\end{align}
It then follows from $(\ref{yt*})$, $(\ref{Ln*111})$, $(\ref{chi2222})$ and $(\ref{biupperbound1})$ that for all $\epsilon>0$, we can choose $i^*$ large enough such that 
\begin{align*}
	b_i\leqslant P(Y<T_{i+1}^*)+3\epsilon,~\forall i>i^*.
\end{align*}
By $(\ref{b1restart})$, $(\ref{rateofT*})$ and $(\ref{b1restart2})$, we have
\begin{equation}\label{bilowerbound}
	b_i\geqslant P(Y<T_{i+1}^*)=\frac{c}{E\left[\frac{1}{X}\right]+c}, ~\forall i\in\mathbb{N}^+.
\end{equation}
By the squeeze theorem, we have
\begin{equation*}\label{bilimit111}
	\lim_{i\to\infty}b_i=P(Y<T_{i+1}^*)=\frac{c}{E\left[\frac{1}{X}\right]+c}.
\end{equation*}
\end{proof}

\begin{Lemma}\label{lemma18nup1}
	The limit $\lim_{n\to\infty}G_c^n(\delta_1)$ exists. Let $\nu_c^*=\lim_{n\to\infty}G_c^n(\delta_1)$. Then $\nu_c^*$ is a solution to (\ref{theoremequ1}). If $c<E[1/X]<\infty$, then 
	$$\nu_c^*\in\mathcal{S}_1.$$ 
	If $c>E[1/X]$, then $\nu_c^*$ has infinite mean.
\end{Lemma}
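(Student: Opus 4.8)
The plan is to use monotonicity of $G_c$ in the stochastic order, together with the first-moment estimate from Lemma~\ref{lemma17uniquep1}. I would first record that $G_c$ preserves the stochastic order: if $\mu\preceq\mu'$, couple $W_i\le W_i'$ with $W_i\sim\mu$, $W_i'\sim\mu'$ for $i=1,2$, all independent of one copy of $(Y,\Pi)$; then $W_1+W_2\le W_1'+W_2'$, and since $k\mapsto L_k(t)$ is nondecreasing under the restriction coupling of the $\Lambda$-coalescent (with $L_\infty(t):=\lim_{k\to\infty}L_k(t)$, the number of blocks of $\Pi_t$, which is the convention implicit in the definition of $G_c$), we get $L_{W_1+W_2}(Y)\le L_{W_1'+W_2'}(Y)$, hence $G_c(\mu)\preceq G_c(\mu')$. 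Since $G_c(\delta_1)$ is the law of $L_2(Y)$, supported on $\{1,2\}$, we have $\delta_1\preceq G_c(\delta_1)$, and by induction $G_c^n(\delta_1)\preceq G_c^{n+1}(\delta_1)$. Thus $(G_c^n(\delta_1))_{n\ge 0}$ is stochastically nondecreasing and, $\mathbb{N}^+\cup\{\infty\}$ being compact, converges weakly to some $\nu_c^*\in\mathcal{S}$; concretely, realize $X_n\sim G_c^n(\delta_1)$ with $X_n\uparrow X_\infty\sim\nu_c^*$ almost surely. For the fixed-point identity, take two independent such monotone sequences $(X_1^{(n)})_n$, $(X_2^{(n)})_n$ and an independent $(Y,\Pi)$; then $L_{X_1^{(n)}+X_2^{(n)}}(Y)\uparrow L_{X_1^{(\infty)}+X_2^{(\infty)}}(Y)$ a.s., the left side having law $G_c^{n+1}(\delta_1)$ and the right side law $G_c(\nu_c^*)$, so $G_c(\nu_c^*)=\lim_n G_c^{n+1}(\delta_1)=\nu_c^*$, which is (\ref{theoremequ1}).

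Next I would analyze the means. Let $a_n$ be the mean of $G_c^n(\delta_1)$. Since $\delta_1$ lives on $\mathbb{N}^+$ and $L_k(t)\le k$, an induction gives $a_{n+1}=E\big[L_{W_1+W_2}(Y)\big]\le E[W_1+W_2]=2a_n$ with $W_1,W_2\sim G_c^n(\delta_1)$ independent, so $a_n\le 2^n<\infty$ for every $n$; and by monotone convergence along $X_n\uparrow X_\infty$ the mean of $\nu_c^*$ equals $a_\infty:=\lim_n a_n\in[1,\infty]$. Set $g(k):=E[L_k(Y)]$ and $\beta:=c/(E[1/X]+c)$. Since $g(1)=1$ and $g(k+1)-g(k)=b_k$, we have $g(k)=1+\sum_{i=1}^{k-1}b_i$; by (\ref{bilowerbound}) $b_i\ge\beta$, so $g(k)\ge 1+(k-1)\beta\ge\beta k$, and by Lemma~\ref{lemma17uniquep1} $b_i\to\beta$, so $g(k)/k\to\beta$ by Cesàro averaging. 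Finally, $a_{n+1}=E[g(W_1+W_2)]$ with $W_1,W_2\sim G_c^n(\delta_1)$ independent.

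Now split into the two regimes, using $\beta<\tfrac12\iff c<E[1/X]$ and $\beta>\tfrac12\iff c>E[1/X]$. If $c<E[1/X]$, choose $\epsilon>0$ with $2(\beta+\epsilon)<1$; since $g(k)/k\to\beta$ there is $K$ with $g(k)\le(\beta+\epsilon)k$ for $k\ge K$, while $g(k)\le k<K$ for $k<K$, so $g(k)\le(\beta+\epsilon)k+K$ for all $k$, whence $a_{n+1}\le 2(\beta+\epsilon)a_n+K$. As $2(\beta+\epsilon)<1$, this recursion is bounded, $a_\infty\le a_0+K/(1-2(\beta+\epsilon))<\infty$; a finite mean forces $\nu_c^*(\{\infty\})=0$, i.e. $\nu_c^*\in\mathcal{S}_1$. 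If $c>E[1/X]$, then $g(k)\ge\beta k$ gives $a_{n+1}=E[g(W_1+W_2)]\ge\beta E[W_1+W_2]=2\beta a_n$, hence $a_n\ge(2\beta)^n a_0\to\infty$, so $\nu_c^*$ has infinite mean.

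The substantive input — the exact constant $\beta=c/(E[1/X]+c)$ appearing both in $b_i\to\beta$ and in the uniform bound $b_i\ge\beta$ — is precisely what Lemma~\ref{lemma17uniquep1} and (\ref{bilowerbound}) supply, so the remaining work is organizational: setting up the monotone coupling so that the fixed-point identity and the equality between the mean of $\nu_c^*$ and $\lim_n a_n$ both follow from monotone convergence, and closing the recursion $a_{n+1}\lesssim 2\beta a_n$ with the right constant so the dichotomy lands exactly at $c=E[1/X]$. The one delicate point is the coupling when $X_1^{(\infty)}+X_2^{(\infty)}=\infty$, where $L_{X_1^{(n)}+X_2^{(n)}}(Y)\uparrow L_\infty(Y)$, the number of blocks of $\Pi_Y$.
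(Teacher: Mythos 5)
Your proof is correct and follows essentially the same approach as the paper's: iterate $G_c$ monotonically from $\delta_1$, then control the means $a_n$ via the increments $b_i$ from Lemma~\ref{lemma17uniquep1} (lower bound $b_i\geqslant c/(E[1/X]+c)$ and limit $b_i\to c/(E[1/X]+c)$), closing a linear recursion $a_{n+1}\leqslant \rho\, a_n + K$ with $\rho<1$ in one regime and $a_{n+1}\geqslant 2\beta\, a_n$ with $2\beta>1$ in the other. The only cosmetic difference is that you establish the fixed-point identity $G_c(\nu_c^*)=\nu_c^*$ via an almost-sure monotone coupling and monotone convergence, whereas the paper passes through a probability-mass computation invoking Scheffe's theorem and the Dominated Convergence Theorem; both are valid and of comparable length.
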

\begin{proof}
	Since the expression in $(\ref{dist111})$ increases as $W_1$ and $W_2$ increase, we have that $G_c$ is monotone. It follows that the sequence of iterates $\{G_c^n(\delta_1)\}_{n=1}^\infty$ is increasing and then there exists a distribution $\nu_c^*$ on $\mathbb{N}^+\cup\{\infty\}$ such that
$$G_c^n(\delta_1)\to_d\nu_c^*,\text{ as } n\to\infty.$$

Note that for all $k\in\mathbb{N}^+$,
\begin{align}\label{lemma18proof111}
	\left(G_c(\nu_c^*)\right)(\{k\})&=\sum_{i=k}^\infty \left(\nu_c^* \ast \nu_c^*\right)(\{i\})P(L_i(Y)=k)\nonumber\\
	&= \lim_{n\to\infty}\sum_{i=k}^\infty\left(\left(\nu_c^* \ast \nu_c^*\right)(\{i\})-\left(G_c^{n-1}(\delta_1) \ast G_c^{n-1}(\delta_1)\right)(\{i\})\right)P(L_i(Y)=k)\nonumber\\
	&\quad\quad\quad\quad+\lim_{n\to\infty}\sum_{i=k}^\infty\left(G_c^{n-1}(\delta_1) \ast G_c^{n-1}(\delta_1)\right)(\{i\})P(L_i(Y)=k). 
\end{align}
By Scheffe's theorem and the Dominated Convergence Theorem, the first term on the right-hand side of $(\ref{lemma18proof111})$ is 0. Note that the second term on the right-hand side of $(\ref{lemma18proof111})$ is 
$\nu_c^*(\{k\})$, which implies that $\nu_c^*=G_c(\nu_c^*)$.

Let $X_{n}$ be a random variable with distribution $G_c^{n}(\delta_1)$.  
It follows from the Monotone Convergence Theorem that
\begin{equation}\label{limitofnu*123}
	\int_0^\infty \nu_c^*[x,\infty]dx=\lim_{n\to\infty}E[X_n].
\end{equation}
If $c<E[1/X]$, then $\lim_{i\to\infty}b_i<1/2$ by Lemma \ref{lemma17uniquep1}. It follows that there exist $\delta>0$ and $I\in\mathbb{N}^+$ such that
\begin{equation*}\label{M1}
	b_i<\frac{1}{2}-\delta, ~\forall i\geqslant I.
\end{equation*} 
Let $\bar{X}_{n}$ be a random variable with distribution $G_c^n(\delta_1)$ independent of $X_n$. Then 
\begin{align}
E[X_{n+1}]=E[L_{X_{n}+\bar{X}_{n}}(Y)]=\sum_{i=2}^\infty P(X_{n}+\bar{X}_{n}=i)E[L_{i}(Y)].\label{sumofL}
\end{align}
Note that $L_i(Y)\leqslant i$. Then
\begin{align}
\sum_{i=2}^{I} P(X_{n}+\bar{X}_{n}=i)E[L_{i}(Y)]\leqslant \sum_{i=2}^{I} iP(X_{n}+\bar{X}_{n}=i)\leqslant I,\label{sumofL1}
\end{align}
and 
\begin{align}
	\sum_{i=I+1}^\infty P(X_{n}+\bar{X}_{n}=i)E[L_{i}(Y)]=&\sum_{i=I+1}^\infty P(X_{n}+\bar{X}_{n}=i)\left(E[L_{I}(Y)]+\sum_{n=1}^{i-I}b_{I+n-1}\right)\nonumber\\
< &\sum_{i=I+1}^\infty P(X_{n}+\bar{X}_{n}=i)\left(I+\left(\frac{1}{2}-\delta\right)(i-I)\right)\nonumber\\
<&\left(\frac{1}{2}+\delta\right)I+(1-2\delta)E[X_{n}].\label{sumofL2}
\end{align}
By $(\ref{sumofL})$, $(\ref{sumofL1})$ and $(\ref{sumofL2})$, we have
\begin{equation*}\label{sumofL3}
	E[X_{n+1}]<\left(\frac{3}{2}+\delta\right)I+(1-2\delta)E[X_{n}].
\end{equation*}
It follows that the monotonically increasing sequence $\{E[X_n]\}_{n=1}^\infty$ has an upper bound, which implies that the limit $\lim_{n\to\infty}E[X_n]$ is finite. Therefore, by $(\ref{limitofnu*123})$, when $c<E[1/X]$, we have
$$\nu_c^*\in\mathcal{S}_1.$$
Recall that $b_i \geqslant c/(c+E[1/X])$ for all $i\in\mathbb{N}^+$ by $(\ref{bilowerbound})$. Then
\begin{align}
	E[X_{n+1}]=\sum_{i=2}^\infty P(X_{n}+\bar{X}_{n}=i)E[L_{i}(Y)]
	&=\sum_{i=2}^\infty P(X_{n}+\bar{X}_{n}=i)\left(1+\sum_{n=2}^i b_{n-1}\right)\nonumber\\
	&\geqslant1+\sum_{i=2}^\infty P(X_{n}+\bar{X}_{n}=i)\frac{c(i-1)}{c+E\left[\frac{1}{X}\right]}\nonumber\\
	&=\frac{2c}{c+E\left[\frac{1}{X}\right]}E[X_n]+1-\frac{c}{c+E\left[\frac{1}{X}\right]}.\label{larger1}
\end{align}
If $c>E[1/X]$, then $c/(c+E[1/X])>1/2$. Therefore, by $(\ref{limitofnu*123})$ and $(\ref{larger1})$, we have
that $\nu_c^*$ has infinite mean when $c>E[1/X]$. 
\end{proof}

\subsection{Uniqueness of the Solution to the RDE}

\begin{Lemma}
\label{lemmadistLTY}
	Let $(\Pi_t,t\geqslant 0)$ be a $\Lambda$-coalescent and let $(\Pi^n_t,t\geqslant 0)$ be its restriction to $[n]$. Let $L_n(t)$ denote the number of blocks in $\Pi^n_t$. Let $Y\sim \text{Exp}(c)$, where $c$ is some positive constant. Suppose $T$ is a random variable on $\mathbb{N}^+$, and $T$, $Y$, $\Pi$ are independent. Then
\begin{align*}
P(T=n)=\frac{\lambda_n+c}{c}P(L_T(Y)=n)  -\frac{1}{c}\sum_{i=n+1}^{\infty}  \binom{i}{i-n+1}\lambda_{i, i-n+1}P(L_T(Y)=i), ~\forall n\in\mathbb{N}^+.
\end{align*}
\end{Lemma}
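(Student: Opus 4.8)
The conceptual content is that the law of $L_T(Y)$ is the resolvent, evaluated at rate $c$, of the block‑counting process of the $\Lambda$-coalescent, started from the law of $T$; equivalently, it is obtained from the law of $T$ by Laplace‑transforming the Kolmogorov forward equation. Once this is set up, the claimed identity is just that relation read off coordinate by coordinate. Concretely, I would proceed as follows.

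First I would reduce to a deterministic number of starting blocks. Conditionally on $\{T=m\}$, the process $(L_T(t),t\geqslant 0)$ has the law of $(L_m(t),t\geqslant 0)$, a finite‑state continuous‑time pure‑death chain on $\{1,\dots,m\}$: from a state with $i$ blocks it jumps to $i-k+1$ blocks at rate $\binom{i}{k}\lambda_{i,k}$ for $2\leqslant k\leqslant i$, with total exit rate $\lambda_i=\sum_{k=2}^{i}\binom{i}{k}\lambda_{i,k}$ and $\lambda_1=0$. Writing $f^{(m)}_n(t)=P(L_m(t)=n)$, the forward equations are
\[
\frac{d}{dt}f^{(m)}_n(t)=-\lambda_n f^{(m)}_n(t)+\sum_{i=n+1}^{m}\binom{i}{i-n+1}\lambda_{i,i-n+1}\,f^{(m)}_i(t),
\]
since the only one‑step transition into $n$ blocks from $i>n$ blocks is an $(i-n+1)$-merger. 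This is a finite linear system, so each $f^{(m)}_n$ is smooth and bounded, with $f^{(m)}_n(0)=\mathbf{1}\{m=n\}$. I would then multiply by $ce^{-ct}$ and integrate over $t\in[0,\infty)$. Using $P(L_m(Y)=n)=\int_0^\infty ce^{-ct}f^{(m)}_n(t)\,dt$ (valid since $Y$ is independent of $\Pi$) and integration by parts on the left (the boundary term at infinity vanishes by boundedness), this gives the identity of the lemma for $T\equiv m$:
\[
-c\,\mathbf{1}\{m=n\}+c\,P(L_m(Y)=n)=-\lambda_n P(L_m(Y)=n)+\sum_{i=n+1}^{m}\binom{i}{i-n+1}\lambda_{i,i-n+1}P(L_m(Y)=i).
\]

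Finally I would average over $T$: multiply by $P(T=m)$ and sum over $m$. The left‑hand side yields $-c\,P(T=n)+c\,P(L_T(Y)=n)$ and the $-\lambda_n$ term yields $-\lambda_n P(L_T(Y)=n)$; for the remaining double sum I would apply Tonelli's theorem (all summands are nonnegative) together with the fact that $P(L_m(Y)=i)=0$ whenever $m<i$, so that $\sum_{m}P(T=m)P(L_m(Y)=i)=P(L_T(Y)=i)$. Collecting terms and dividing by $c$ gives the stated formula.

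\textbf{Main obstacle.} The computation itself is short; the care is in the justification of the two interchanges. The forward equation is standard only because I first restrict to a finite state space — this is the reason for conditioning on $\{T=m\}$ before summing. For the sum over $m$, Tonelli legitimizes the exchange of summations thanks to nonnegativity; a priori this only shows the series $\sum_{i>n}\binom{i}{i-n+1}\lambda_{i,i-n+1}P(L_T(Y)=i)$ takes a value in $[0,\infty]$, but the resulting identity forces it to equal $(c+\lambda_n)P(L_T(Y)=n)-c\,P(T=n)$, which is finite because $\lambda_n\leqslant\sum_{k=2}^{n}\binom{n}{k}<\infty$ for the finite measure $\Lambda$. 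Hence no divergence arises and the rearrangement is valid. (An alternative, essentially equivalent, route avoids the ODE entirely: write $\nu$ for the law of $L_T(Y)$ and $\mu$ for the law of $T$ as row vectors; independence of $Y$ gives $\nu=c\,\mu(cI-Q)^{-1}$ with $Q$ the block‑counting generator, hence $c\mu=\nu(cI-Q)$, and reading off the $n$‑th coordinate of $\nu Q$ reproduces the formula — but one still has to make $Q$ rigorous on the countable state space, which again is cleanest after conditioning on $\{T=m\}$.)
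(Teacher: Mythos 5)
Your proof is correct, but it takes a genuinely different route from the paper's. The paper avoids the Kolmogorov forward equation entirely: it decomposes the event $\{L_T(Y)=n\}$ according to whether $T=n$ (the chain never moved before $Y$) or $T\geqslant n+1$, and in the latter case according to the pre-jump state $i$ of the \emph{last} merger before time $Y$. The memoryless property of $Y$ gives the clean factorization $P(L_T(Y)=i)=\tfrac{c}{\lambda_i+c}P(B_i)$ where $B_i$ is the event that the chain reaches $i$ blocks before $Y$, and then $P(\{L_T(Y)=n\}\cap A_{i,n})=P(B_i)\cdot\tfrac{\binom{i}{i-n+1}\lambda_{i,i-n+1}}{\lambda_i+c}\cdot\tfrac{c}{\lambda_n+c}$; summing over $i$ and rearranging gives the formula. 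Your approach instead passes through the forward equation for the block-counting pure-death chain started at a deterministic $m$, Laplace-transforms it with the $\mathrm{Exp}(c)$ density (integration by parts producing the $-c\mathbf{1}\{m=n\}$ boundary term and the $(c+\lambda_n)$ factor), and then averages over $T$, with Tonelli handling the interchange and the per-$m$ identity plus finiteness of $\lambda_n$ guaranteeing the series converges. The paper's argument is slightly more self-contained and stays entirely within elementary probability, whereas yours isolates the structural fact — that the law of $L_T(Y)$ is the rate-$c$ resolvent of the block-counting generator applied to the law of $T$ — which generalizes verbatim to any continuous-time Markov chain observed at an independent exponential time. Both are rigorous; your remarks on the two interchanges (conditioning to get a finite state space for the ODE, and Tonelli plus a posteriori finiteness for the sum over $m$) correctly address the only delicate points.
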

\begin{proof}
Note that 
\begin{align}\label{LTYSplit}
P(L_{T}(Y)=n)=P(L_{T}(Y)=n,T\geqslant n+1)+P(L_{T}(Y)=n,T=n).
\end{align}
Let $A_{i,n}$ be the event that starting from $T$ blocks, the last merger before time $Y$ makes the number of blocks decrease from $i$ to $n$. Then 
\begin{equation}\label{LTYW12i1}
	\{L_{T}(Y)=n\}\cap\{T\geqslant n+1\}=\bigcup_{i=n+1}^\infty\left(\{L_{T}(Y)=n\}\cap A_{i,n}\right).
\end{equation}
Recall from (\ref{lambda11}) and (\ref{lambda22}) that the rate for each $k$-tuple of blocks to merge when there are $b$ blocks in total is $\lambda_{b,k}$ and that the rate at which mergers happen when there are $b$ blocks in total is $\lambda_b$.
Once the number of blocks reaches $i$, because of the memoryless property of exponential random variables, the probability for the number of blocks to stay at $i$ until time $Y$ is $c/(\lambda_i+c)$. 
Let $B_i$ be the event that the number of blocks reaches $i$ before time $Y$. Then
\begin{align}\label{LTYLijY}
	P(L_T(Y)=i)=\frac{c}{\lambda_i+c}P(B_i).
\end{align}
 Since the transition rate from $i$ to $n$ is $\binom{i}{i-n+1}\lambda_{i,i-n+1},$ the number of blocks jumps to $n$ with probability $\binom{i}{i-n+1}\lambda_{i,i-n+1}/(\lambda_i+c).$  
Therefore
\begin{align}\label{LTYLij1}
	P(\{L_{T}(Y)=n\}\cap A_{i,n} )&=P(B_i)\frac{\binom{i}{i-n+1}\lambda_{i,i-n+1}}{\lambda_i+c}\frac{c}{\lambda_n+c}.
\end{align}
For the first probability in $(\ref{LTYSplit})$, by $(\ref{LTYW12i1})$, $(\ref{LTYLijY})$ and $(\ref{LTYLij1})$, we have 
\begin{align}\label{LTYW12i} 
P(L_{T}(Y)=n,T\geqslant n+1)&=\sum_{i=n+1}^\infty P(\{L_{T}(Y)=n\}\cap A_{i,n} )\nonumber\\
&=\sum_{i=n+1}^\infty \binom{i}{i-n+1}\frac{\lambda_{i,i-n+1}}{\lambda_n+c}P(L_{T}(Y)=i).
\end{align}
For the second probability in $(\ref{LTYSplit})$, we have
\begin{align}\label{LTYW12i2}
	P(L_{T}(Y)=n,T=n)=P(L_n(Y)=n)P(T=n)=\frac{c}{\lambda_n+c}P(T=n).
\end{align}
The result follows from  
$(\ref{LTYSplit})$, $(\ref{LTYW12i})$ and $(\ref{LTYW12i2})$. 
\end{proof}

\begin{Lemma}\label{lemmapgflambda}
	Let $\nu_c^*\in\mathcal{S}$ be a distribution that satisfies the recursive distributional equation (\ref{theoremequ1}) $G_c(\nu_c^*)=\nu_c^*$. 
Let $R_c$ be the probability generating function of $\nu^*_c$. Then $R_c(x)$ satisfies
\begin{equation}\label{PGF}
E\left[\frac{R_c(x)-R_c((1-X)x)}{X^2}\right]+c(R_c(x)-R_c^2(x))=xE\left[\frac{R_c(X+(1-X)x)-R_c((1-X)x)}{X^2}\right],
\end{equation}
for all $x\in[0,1]$.
If $E[1/X]<\infty$ and $\nu_c^*\in\mathcal{S}_1$, then both sides of $(\ref{PGF})$ are finite for all $x\in[0,1]$.
\end{Lemma}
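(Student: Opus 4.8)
The plan is to convert the fixed-point relation $G_c(\nu_c^*)=\nu_c^*$ into (\ref{PGF}) by feeding it into Lemma~\ref{lemmadistLTY} and passing to generating functions. Apply Lemma~\ref{lemmadistLTY} with $T=W_1+W_2$, where $W_1,W_2$ are i.i.d.\ with law $\nu_c^*$ and independent of $Y$ and $\Pi$. Since $\nu_c^*=G_c(\nu_c^*)$, the variable $L_T(Y)=L_{W_1+W_2}(Y)$ again has law $\nu_c^*$, so, writing $r_n=\nu_c^*(\{n\})$ and $q_n=(\nu_c^*\ast\nu_c^*)(\{n\})$, Lemma~\ref{lemmadistLTY} gives
\begin{equation*}
c\,q_n=(\lambda_n+c)\,r_n-\sum_{i=n+1}^{\infty}\binom{i}{i-n+1}\lambda_{i,i-n+1}\,r_i, \quad n\in\mathbb{N}^+.
\end{equation*}
I would multiply by $x^n$, sum over $n\geqslant1$, and use $\sum_n q_n x^n=R_c(x)^2$ and $\sum_n r_n x^n=R_c(x)$. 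For the two remaining sums I would interchange the order of summation (substituting $k=i-n+1$ in the double sum) and then use $\lambda_{n,k}=E[X^{k-2}(1-X)^{n-k}]$ together with the identity
\begin{equation*}
\sum_{k=2}^{n}\binom nk X^{k}(1-X)^{n-k}=1-(1-X)^n-nX(1-X)^{n-1}\geqslant 0
\end{equation*}
to rewrite $\sum_n\lambda_n r_n x^n$ as $E\bigl[(R_c(x)-R_c((1-X)x)-Xx\,R_c'((1-X)x))/X^2\bigr]$ and the double sum as $xE\bigl[(R_c(X+(1-X)x)-R_c((1-X)x)-X\,R_c'((1-X)x))/X^2\bigr]$.

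The decisive point is that, when these two expressions are put together, the two $R_c'$ contributions appear as $-Xx\,R_c'((1-X)x)/X^2$ and $+Xx\,R_c'((1-X)x)/X^2$ and hence cancel identically; after that cancellation and a rearrangement one is left with exactly (\ref{PGF}). The interchanges of the expectation over $X$ with the sums over $n$ and $i$ are legitimate by Tonelli's theorem since the summands are nonnegative — this is where the displayed binomial identity is used, its right-hand side being a manifestly nonnegative sum of binomial terms (and similarly for the analogous identity in $i$). I would also note that $R_c$ is a power series of radius of convergence at least $1$, so $R_c,R_c',R_c''$ are finite on $[0,1)$ and each individual piece above converges (and is finite, e.g.\ by a second-order Taylor estimate) for $x<1$; the endpoint $x=1$ is then handled by letting $x\uparrow1$, since $c(R_c(x)-R_c(x)^2)$ is continuous there by Abel's theorem while the two expectation terms in (\ref{PGF}) are nonnegative and nondecreasing in $x$, hence tend to their values at $x=1$ by monotone convergence. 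I expect the bookkeeping — choosing the right index substitution and tracking the three pieces of the binomial expansion so that the $R_c'$ terms visibly cancel — to be the only real obstacle; once Lemma~\ref{lemmadistLTY} is available there is no conceptual difficulty.

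For the finiteness claim, suppose $E[1/X]<\infty$ and $\nu_c^*\in\mathcal{S}_1$, and let $W\sim\nu_c^*$, so $E[W]<\infty$. From $1-(1-X)^i\leqslant iX$ we get
\begin{equation*}
R_c(x)-R_c((1-X)x)=\sum_i r_i x^i\bigl(1-(1-X)^i\bigr)\leqslant X\,E[W],
\end{equation*}
and, since $(1-X)x+X\leqslant1$ for $x\in[0,1]$, the mean value theorem gives $(X+(1-X)x)^i-((1-X)x)^i\leqslant iX$, hence $R_c(X+(1-X)x)-R_c((1-X)x)\leqslant X\,E[W]$. Dividing by $X^2$ and taking expectations bounds both $E[(R_c(x)-R_c((1-X)x))/X^2]$ and $xE[(R_c(X+(1-X)x)-R_c((1-X)x))/X^2]$ by $E[W]\,E[1/X]<\infty$; together with $c(R_c(x)-R_c(x)^2)\leqslant c$, this shows that both sides of (\ref{PGF}) are finite for all $x\in[0,1]$.
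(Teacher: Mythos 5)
Your proposal is correct and follows essentially the same route as the paper: apply Lemma~\ref{lemmadistLTY} with $T=W_1+W_2$, multiply by $x^n$, sum over $n$, interchange expectation with summation (the paper invokes the Monotone Convergence Theorem where you invoke Tonelli, which amounts to the same thing), identify the two $R_c'$ contributions and cancel them, and then bound the two expectations in (\ref{PGF}) by $E[W]\,E[1/X]$ when $\nu_c^*\in\mathcal{S}_1$. The only cosmetic difference is that the paper keeps the canceling piece in the form $\sum_n n\,P(W=n)\,x^n X(1-X)^{n-1}/X^2$ rather than naming it $xX\,R_c'((1-X)x)/X^2$, and it bounds $R_c(x)-R_c((1-X)x)\leqslant xX\,R_c'(x)$ via convexity of $R_c$ rather than via $1-(1-X)^i\leqslant iX$; these are equivalent.
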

\begin{proof}
Let $W$, $W_1$ and $W_2$ be independent random variables with distribution $\nu_c^*$. We have $W=_{d}L_{W_{1}+W_{2}}(Y)$ because $G_c(\nu_c^*)=\nu_c^*$. By Lemma \ref{lemmadistLTY}, we have 
\begin{align*}
P(W_1+W_2=n)=\frac{\lambda_n+c}{c}P(W=n)  -\frac{1}{c}\sum_{i=n+1}^{\infty}  \binom{i}{i-n+1}\lambda_{i, i-n+1}P(W=i), ~\forall n\in\mathbb{N}^+.
\end{align*}
Note that $P(W_1+W_2=1)=0$ because $\nu_c^*$ is a distribution on $\mathbb{N}^+$. It then follows from $(\ref{lambda11})$, $(\ref{lambda22})$, $(\ref{lambda33})$ and the equation above that
\begin{align*}
	\Bigg(\sum_{k=2}^{n}&\binom{n}{k}E\left[X^{k-2}(1-X)^{n-k}\right]+c\Bigg)P(W=n)\nonumber\\
	&=\sum_{i=n+1}^\infty P(W=i)\binom{i}{i-n+1}E\left[X^{i-n-1}(1-X)^{n-1}\right]+cP(W_1+W_2=n), ~i\geqslant 2,
\end{align*}
and 
\begin{align*}
	cP(W=1)=\sum_{i=2}^\infty P(W=i)E[X^{i-2}].
\end{align*}
By multiplying both sides by $x^n$ and summing over $n$, we have
\begin{align}\label{equaa2}
	&\sum_{n=2}^{\infty}P(W=n)x^n\sum_{k=2}^{n}\binom{n}{k}E\left[X^{k-2}(1-X)^{n-k}\right]+\sum_{n=1}^\infty cP(W=n)x^n\nonumber\\
	&~~=\sum_{n=1}^\infty cP(W_1+W_2=n)x^n+\sum_{n=1}^{\infty}x^n\sum_{i=n+1}^\infty P(W=i)\binom{i}{i-n+1}E\left[X^{i-n-1}(1-X)^{n-1}\right].
\end{align}
Note that 
\begin{equation}\label{equaa3}
	\sum_{n=1}^\infty cP(W=n)x^n=cR_c(x),\text{ and }\sum_{n=1}^\infty cP(W_1+W_2=n)x^n=cR_c^2(x).
\end{equation}
By the Monotone Convergence Theorem, we have
\begin{align}\label{equaa5}
	\sum_{n=2}^{\infty}P(&W=n)x^n\sum_{k=2}^{n}\binom{n}{k}E\left[X^{k-2}(1-X)^{n-k}\right]\nonumber\\
	=&E\left[\sum_{n=2}^{\infty}P(W=n)x^n\sum_{k=2}^{n}\binom{n}{k}X^{k-2}(1-X)^{n-k}\right]\nonumber\\
	=&E\left[\sum_{n=2}^{\infty}P(W=n)x^n\frac{1-(1-X)^n-nX(1-X)^{n-1}}{X^2}\right]\nonumber\\
	=&E\left[\sum_{n=1}^{\infty}P(W=n)x^n\frac{1-(1-X)^n-nX(1-X)^{n-1}}{X^2}\right]\nonumber\\
	=&E\left[\frac{R_c(x)-R_c((1-X)x)-\sum_{n=1}^{\infty}nP(W=n)x^nX(1-X)^{n-1}}{X^2}\right],
\end{align}
and
\begin{align}\label{equaa6}
	\sum_{n=1}^{\infty}x^n&\sum_{i=n+1}^\infty P(W=i)\binom{i}{i-n+1}E\left[X^{i-n-1}(1-X)^{n-1}\right]\nonumber\\
	=&\sum_{i=2}^{\infty}P(W=i)x\sum_{n=1}^{i-1}\binom{i}{i-n+1}E\left[X^{i-n-1}(1-X)^{n-1}\right]x^{n-1}\nonumber\\
	=&E\left[\sum_{i=2}^{\infty}P(W=i)x\frac{(X+(1-X)x)^i-((1-X)x)^i-iX((1-X)x)^{i-1}}{X^2}\right]\nonumber\\
	=&E\left[\sum_{i=1}^{\infty}P(W=i)x\frac{(X+(1-X)x)^i-((1-X)x)^i-iX((1-X)x)^{i-1}}{X^2}\right]\nonumber\\
	=&E\left[\frac{xR_c(X+(1-X)x)-xR_c((1-X)x)-\sum_{i=1}^{\infty}iP(W=i)x^iX(1-X)^{i-1}}{X^2}\right].
\end{align}
The result (\ref{PGF}) follows from $(\ref{equaa2})$, $(\ref{equaa3})$, $(\ref{equaa5})$ and $(\ref{equaa6})$.

Since probability generating functions have nonnegative second order derivatives on $[0,1]$ and $X$ is a random variable on $[0,1]$, we have 
$$0\leqslant R_c(x)-R_c((1-X)x)\leqslant xXR_c'(x), ~\forall x\in[0,1].$$
Note that $R_c'(1)$ equals the mean of $W$. Since $\mu^*\in\mathcal{S}_1$, we have $R_c'(1)<\infty$, which implies $R_c'(x)<\infty$ for all $x\in[0,1]$. If $E[1/X]<\infty$, then 
\begin{equation*}
	E\left[\frac{R_c(x)-R_c((1-X)x)}{X^2}\right]\leqslant xR_c'(x)E\left[\frac{1}{X}\right]<\infty, ~\forall x\in[0,1].
\end{equation*}
Therefore, both sides of $(\ref{PGF})$ are finite for all $x\in[0,1]$ when $E[1/X]<\infty$.
\end{proof}

Let $\{c_i\}_{i=1}^\infty$ be a sequence of constants. Let $G_{c_i}:\mathcal{S}\to\mathcal{S}$ be the mapping defined such that $G_{c_i}(\mu)$ is the distribution of 
\begin{equation*}\label{dist22}
L_{W_{1}+W_{2}}(Y_i),
\end{equation*}
where $W_1$ and $W_2$ have distribution $\mu$, $Y_i$ is exponentially distributed with rate $c_i$, and the random variables $Y_i$, $W_{1}$, $W_{2}$ and the $\Lambda$-coalescent process are independent. Let $\nu^*_{c_i}$ denote the distribution $\lim_{n\to\infty}G^n_{c_i}(\delta_1)$. Let $R_{c_i}$ be the probability generating function of $\lim_{n\to\infty}G_{c_i}^n(\delta_1)$.

\begin{Lemma}\label{lemma21now}
If $c_1<c_2<E[1/X]$, then $\nu^*_{c_1}\preceq\nu^*_{c_2}$ and $R_{c_1}(x)>R_{c_2}(x)$ for all $x\in(0,1)$.
\end{Lemma}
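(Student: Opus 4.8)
The plan is to derive the stochastic ordering $\nu^*_{c_1}\preceq\nu^*_{c_2}$ from a monotone coupling and then upgrade the resulting inequality $R_{c_1}\geqslant R_{c_2}$ to a strict one by using that the functional equation (\ref{PGF}) is affine in $c$. Two monotonicity properties of the maps $G_c$ drive the first part. First, $G_c$ is monotone in its argument: this is the observation already used in the proof of Lemma~\ref{lemma18nup1} that $L_{W_1+W_2}(Y)$ in (\ref{dist111}) is nondecreasing in $W_1$ and $W_2$, so coupling $W_i\leqslant W_i'$ gives $\mu\preceq\mu'\Rightarrow G_c(\mu)\preceq G_c(\mu')$. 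Second, $G_c$ is monotone in $c$: taking $Y_i=E/c_i$ with a common $E\sim\text{Exp}(1)$ gives $Y_1\geqslant Y_2$ a.s.\ when $c_1<c_2$, and since $t\mapsto L_n(t)$ is nonincreasing for every $n$ (including $n=\infty$), running one $\Lambda$-coalescent with one pair $(W_1,W_2)$ yields $L_{W_1+W_2}(Y_1)\leqslant L_{W_1+W_2}(Y_2)$ pointwise; hence $G_{c_1}(\mu)\preceq G_{c_2}(\mu)$ for every $\mu\in\mathcal{S}$.

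Combining these, I would prove by induction that $G_{c_1}^n(\delta_1)\preceq G_{c_2}^n(\delta_1)$ for all $n$: the base case $n=1$ is monotonicity in $c$, and for the step $G_{c_1}^{n+1}(\delta_1)=G_{c_1}(G_{c_1}^n(\delta_1))\preceq G_{c_1}(G_{c_2}^n(\delta_1))\preceq G_{c_2}(G_{c_2}^n(\delta_1))=G_{c_2}^{n+1}(\delta_1)$, using monotonicity in the argument and then in $c$. Each side is monotone in $n$ and converges to $\nu^*_{c_i}$ by Lemma~\ref{lemma18nup1}, and for each fixed $k$ the tail $G_{c_i}^n(\delta_1)([k,\infty))$ increases to $\nu^*_{c_i}([k,\infty))$; letting $n\to\infty$ gives $\nu^*_{c_1}\preceq\nu^*_{c_2}$. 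Writing $\overline F_{c_i}(k)=\nu^*_{c_i}([k,\infty))$, so that $\overline F_{c_1}(k)\leqslant\overline F_{c_2}(k)$ for all $k$, summation by parts gives $R_{c_i}(x)=x-(1-x)\sum_{k\geqslant 2}\overline F_{c_i}(k)x^{k-1}$ for $x\in[0,1)$, whence $R_{c_1}(x)\geqslant R_{c_2}(x)$ for all $x\in(0,1)$.

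For strictness, suppose $R_{c_1}(x_0)=R_{c_2}(x_0)$ for some $x_0\in(0,1)$. The identity above forces $(1-x_0)\sum_{k\geqslant 2}(\overline F_{c_2}(k)-\overline F_{c_1}(k))x_0^{k-1}=0$, and since $x_0\in(0,1)$ and the summands are nonnegative, every $\overline F_{c_2}(k)-\overline F_{c_1}(k)$ vanishes; hence $\nu^*_{c_1}=\nu^*_{c_2}$ and $R_{c_1}\equiv R_{c_2}=:R$ on $[0,1]$. Since $c_1,c_2<E[1/X]$, Lemma~\ref{lemma18nup1} puts both $\nu^*_{c_1}$ and $\nu^*_{c_2}$ in $\mathcal{S}_1$, and each solves (\ref{theoremequ1}) with its own rate, so by Lemma~\ref{lemmapgflambda} the function $R$ satisfies (\ref{PGF}) both with $c=c_1$ and with $c=c_2$, with all terms finite. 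Subtracting these two instances, the terms not involving $c$ cancel and one gets $(c_1-c_2)(R(x)-R(x)^2)=0$ for all $x\in[0,1]$. As $c_1\neq c_2$, this gives $R(x)\in\{0,1\}$ for every $x$, contradicting continuity of $R$ on $[0,1]$ together with $R(0)=0$ and $R(1)=1$. Therefore $R_{c_1}(x)>R_{c_2}(x)$ for all $x\in(0,1)$.

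The only genuine obstacle is the strict inequality: the coupling argument gives only $\preceq$ and $\geqslant$, and ruling out $\nu^*_{c_1}=\nu^*_{c_2}$ requires structure beyond monotonicity. The clean device is that (\ref{PGF}) is affine in $c$, so a common fixed point of $G_{c_1}$ and $G_{c_2}$ would have generating function satisfying $R=R^2$, which is impossible for a nondegenerate probability generating function on $\mathbb{N}^+$; the one bookkeeping point is to verify that Lemma~\ref{lemmapgflambda} applies, i.e.\ that both fixed points have finite mean, which is exactly what the hypothesis $c_1<c_2<E[1/X]$ secures via Lemma~\ref{lemma18nup1}.
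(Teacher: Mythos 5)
Your proof is correct and follows essentially the same route as the paper's: a coupling argument to get $G_{c_1}(\mu)\preceq G_{c_2}(\mu)$, an induction to propagate this to $\nu^*_{c_1}\preceq\nu^*_{c_2}$, and then the affine-in-$c$ structure of (\ref{PGF}) together with $\nu^*_{c_i}\in\mathcal{S}_1$ (via Lemma~\ref{lemma18nup1} and Lemma~\ref{lemmapgflambda}) to rule out equality and upgrade to strict inequality. The only differences are cosmetic: you realize the time coupling as $Y_i=E/c_i$ rather than $Y_1=Y_2+Z$, and you phrase strictness by assuming equality of $R_{c_1}$ and $R_{c_2}$ at a single point rather than first establishing $\nu^*_{c_1}\neq\nu^*_{c_2}$ — both choices are equivalent to the paper's.
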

\begin{proof}
Since $c_1<c_2$, there exist $Y_1\sim \text{Exp}(c_1)$, $Y_2\sim \text{Exp}(c_2)$ and $Z\geqslant 0$ such that $Y_1=Y_2+Z$. Therefore $L_{T}(Y_1)=L_{T}(Y_2+Z)\preceq L_{T}(Y_2)$ for any positive integer-valued random variable $T$, which implies that 
\begin{equation}\label{lemma21gc1c2}
G_{c_1}(\mu)\preceq	G_{c_2}(\mu),~\forall \mu\in\mathcal{S}.
\end{equation}
Assume that $G_{c_1}^n(\delta_1)\preceq	G_{c_2}^n(\delta_1).$ Since $G_{c_2}$ is monotone, we have $G_{c_2}(G_{c_1}^{n}(\delta_1))\preceq G_{c_2}(G^{n}_{c_2}(\delta_1))$. By (\ref{lemma21gc1c2}), we have $G_{c_1}(G_{c_1}^{n}(\delta_1))\preceq G_{c_2}(G^{n}_{c_1}(\delta_1))$. Therefore,
$$G_{c_1}^{n+1}(\delta_1)\preceq	G_{c_2}^{n+1}(\delta_1).$$
By induction, we have $G_{c_1}^{n}(\delta_1)\preceq G_{c_2}^{n}(\delta_1)$ for all $n\in\mathbb{N}^+$. Then 
$$\nu^*_{c_1}=\lim_{n\to\infty}G_{c_1}^{n}(\delta_1)\preceq\lim_{n\to\infty}G_{c_2}^{n}(\delta_1)=\nu^*_{c_2}.$$ 
If $\nu^*_{c_1}=\nu^*_{c_2}$, then $R_{c_1}(x)=R_{c_2}(x)$ for all $x\in[0,1]$. By Lemma \ref{lemmapgflambda}, we have
$$c_1(R_{c_1}(x)-R_{c_1}^2(x))=c_2(R_{c_2}(x)-R_{c_2}^2(x))=c_2(R_{c_1}(x)-R_{c_1}^2(x)),~\forall x\in[0,1],$$
which implies that $R_{c_1}(x)\in\{0,1\}$ for all $x\in[0,1]$. This is not possible because $\nu^*_{c_1}\in\mathcal{S}_1$ by Lemma~\ref{lemma18nup1}. Therefore, $\nu^*_{c_2}$ and $\nu^*_{c_1}$ are not the same. It follows that 
\begin{align*}\label{strictlylarger2}
R_{c_1}(x)&=\sum_{j=1}^{\infty}\nu_{c_1}^*(\{j\})x^j\nonumber\\
&=x-(1-x)\sum_{j=2}^\infty\nu_{c_1}^*[j,\infty)x^{j-1}\nonumber\\
&>x-(1-x)\sum_{j=2}^\infty\nu_{c_2}^*[j,\infty)x^{j-1}=R_{c_2}(x), ~\forall x\in(0,1).
\end{align*}
\end{proof}

Let $G_0:\mathcal{S}\to\mathcal{S}$ be the mapping defined such that $G_0(\mu)=\delta_1$ for all $\mu\in\mathcal{S}$. 

\begin{Lemma}\label{lemma22diff1}
	Let $0\leqslant c<E[1/X]$. If $\tilde{\nu}_c^*\in\mathcal{S}_1$ is a distribution that satisfies $\tilde{\nu}_c^*=G_c(\tilde{\nu}_c^*)$, then limits $\lim_{n\to\infty}G_{c_1}^n(\tilde{\nu}_c^*)$ and $\lim_{n\to\infty}G_{c_2}^n(\tilde{\nu}_c^*)$ exist in $\mathcal{S}$ for all $c_1, c_2\in(0,E[1/X])$. Let $\tilde{\nu}^*_{c_1}=\lim_{n\to\infty}G_{c_1}^n(\tilde{\nu}_c^*)$ and $\tilde{\nu}^*_{c_2}=\lim_{n\to\infty}G_{c_2}^n(\tilde{\nu}_c^*)$. Let $m_1^*$ and $m_2^*$ be the means of $\tilde{\nu}^*_{c_1}$ and $\tilde{\nu}^*_{c_2}$ respectively. For all $\epsilon>0$, there exists a positive constant $K_{\epsilon}$ such that if $c\in[0,E[1/X]-\epsilon]$ and $c_1, c_2\in[\epsilon,E[1/X]-\epsilon]$, then 
	$$|m_2^*-m_1^*|\leqslant K_{\epsilon}|c_2-c_1|.$$
\end{Lemma}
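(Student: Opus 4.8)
First I would establish that the two iterated limits exist and lie in $\mathcal S_1$. Since $\tilde\nu_c^*=G_c(\tilde\nu_c^*)$ and, exactly as in the proof of Lemma \ref{lemma21now}, $G_{c'}(\mu)\succeq G_c(\mu)$ when $c'>c$ and $G_{c'}(\mu)\preceq G_c(\mu)$ when $c'<c$, the sequence $(G_{c'}^n(\tilde\nu_c^*))_{n\ge0}$ is stochastically monotone; hence it converges weakly in $\mathcal S$, and by the Scheff\'e/dominated-convergence argument of Lemma \ref{lemma18nup1} its limit is a fixed point of $G_{c'}$. The means $E[G_{c'}^n(\tilde\nu_c^*)]$ are finite and satisfy the contraction recursion of Lemma \ref{lemma18nup1} (valid for every $c'<E[1/X]$), so they stay bounded and the limit is in $\mathcal S_1$. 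This produces $\tilde\nu^*_{c_1},\tilde\nu^*_{c_2}$; moreover $G_{c_1}^n(\tilde\nu_c^*)\preceq G_{c_2}^n(\tilde\nu_c^*)$ for every $n$ when $c_1<c_2$ (induction, using monotonicity of $G_{c_2}$ and $G_{c_1}(\mu)\preceq G_{c_2}(\mu)$), so $\tilde\nu^*_{c_1}\preceq\tilde\nu^*_{c_2}$ and $m_1^*\le m_2^*$. We may therefore assume $c_1<c_2$ and write $\Delta:=m_2^*-m_1^*=|m_2^*-m_1^*|$.

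The engine of the argument is a mean identity for fixed points. Differentiating the functional equation \eqref{PGF} of Lemma \ref{lemmapgflambda} at $x=1$ (all terms are finite for a finite-mean fixed point), the cross-terms cancel and one is left with
\[
m\,(E[1/X]-c')=E\!\left[\frac{1-R(1-X)}{X^2}\right]=\sum_{l\ge0}\nu[l+1,\infty)\,\chi_l
\]
for any fixed point $\nu\in\mathcal S_1$ of $G_{c'}$ with mean $m$ and probability generating function $R$, where $\chi_l=E[(1-X)^l/X]$ is as in the proof of Lemma \ref{lemma17uniquep1}. Since $\chi_l\le\chi_k$ for $l\ge k$ and $\chi_k\downarrow0$, choosing $k_\epsilon$ with $E[1/X]-\chi_{k_\epsilon}\ge E[1/X]-\epsilon/2$ gives $m\le M_\epsilon$ whenever $c'\le E[1/X]-\epsilon$; in particular $E[\tilde\nu_c^*],m_1^*,m_2^*\le M_\epsilon$. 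Writing $u_i(l):=\tilde\nu^*_{c_i}[l+1,\infty)$ and $\rho_l:=E[(1-(1-X)^l)/X]=E[1/X]-\chi_l$, and subtracting the identity for $(c_1,R_{\tilde\nu^*_{c_1}})$ from that for $(c_2,R_{\tilde\nu^*_{c_2}})$, using $\sum_l(u_2(l)-u_1(l))=\Delta$ and $u_1(0)=u_2(0)=1$, one obtains
\[
c_1\Delta+m_2^*(c_2-c_1)=\sum_{l\ge1}\bigl(u_2(l)-u_1(l)\bigr)\rho_l .
\]
The sequence $(\rho_l)$ increases from $\rho_1=1$ to $E[1/X]$. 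Fixing $l_0=l_0(\epsilon)$ with $\rho_{l_0}\ge E[1/X]-\epsilon/2$ and setting $A:=\sum_{l=1}^{l_0-1}(u_2(l)-u_1(l))\in[0,\Delta]$, we bound the right-hand side below by $\rho_{l_0}(\Delta-A)$; since $c_1\le E[1/X]-\epsilon$ this yields $\tfrac\epsilon2\Delta\le E[1/X]\,A+M_\epsilon|c_2-c_1|$, i.e.
\[
\Delta\le\frac2\epsilon\Bigl(E[1/X]\,A+M_\epsilon|c_2-c_1|\Bigr).
\]

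It remains to control the ``low-index mean gap'' $A$. Here I would use the fixed-point equations directly: realize $W_i=_d L_{S_i}(Y_i)$ with $S_i$ a sum of two independent $\tilde\nu^*_{c_i}$-variables, couple $Y_1=Y_2+Z$ with $Z$ independent of the $\Lambda$-coalescent and $P(Z>0)=(c_2-c_1)/c_2$, couple $S_1\preceq S_2$, and run the two coalescents nested. Inserting the intermediate time $Y_1$,
\[
u_2(l)-u_1(l)=\bigl[P(L_{S_2}(Y_2)>l)-P(L_{S_2}(Y_1)>l)\bigr]+\bigl[P(L_{S_2}(Y_1)>l)-P(L_{S_1}(Y_1)>l)\bigr].
\]
The first bracket is at most $P(Z>0)\le|c_2-c_1|/\epsilon$. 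The second equals $P\bigl(L_{S_1}(Y_1)\le l<L_{S_2}(Y_1)\bigr)$: the $S_1$-coalescent has already reached $\le l$ blocks by time $Y_1$, yet a lineage descended only from the extra initial blocks $S_1+1,\dots,S_2$ has survived. Summing over $l<l_0$, the goal is the bound $A\le\theta_\epsilon\Delta+C_\epsilon|c_2-c_1|$ with $\theta_\epsilon$ small enough that $\tfrac2\epsilon E[1/X]\,\theta_\epsilon<1$; combined with the display above this gives $\Delta(1-\tfrac2\epsilon E[1/X]\theta_\epsilon)\le C'_\epsilon|c_2-c_1|$, which is the claim with $K_\epsilon=C'_\epsilon/(1-\tfrac2\epsilon E[1/X]\theta_\epsilon)$.

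The hard part is precisely this last estimate on $A$: extracting a \emph{genuine} contraction, i.e. bounding $\sum_{l<l_0}P(L_{S_1}(Y_1)\le l<L_{S_2}(Y_1))$ by a small multiple of $\Delta$ plus $O_\epsilon(|c_2-c_1|)$. The crude bound $P(L_{S_2}(Y_1)>L_{S_1}(Y_1))\le E[S_2-S_1]=2\Delta$ is too lossy, since its coefficient $2$ exceeds what the outer inequality can absorb; the same obstruction appears if one instead iterates $G_{c_i}$ and tries to make a per-step mean recursion contract, because the increments $b_i=E[L_{i+1}(Y)-L_i(Y)]$ need not be below $1/2$ for small $i$ when $c\ge1$. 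The resolution must use the fact that we are in the dust regime: an extra lineage merges into the main block at rate $\ge1$, so it survives a whole inter-branching interval with probability bounded away from $1$, and — since $\tilde\nu^*_{c_1}$ has atom at $1$ at most $1/(1+c_1)$, hence little mass at small values unless $c_1$ is small — the mean gap $\Delta=\sum_l(u_2(l)-u_1(l))$ is concentrated at large indices, where $\rho_l$ is close to $E[1/X]$. Everything else — monotonicity, the mean bounds, the subtracted identity, and the final geometric combination — is routine.
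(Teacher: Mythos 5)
Your proposal takes a genuinely different route from the paper. Where the paper constructs an explicit coupling of two random binary species trees (with exponential edge lengths at rates $c_1$ and $c_2$), tracks ``marked'' gene lineages created by the extra coalescent time in the slower model, and bounds the probability $r_k$ that a mark placed at level $k$ survives to the root via a cascade of auxiliary models (its Models 3, 4, 5) so that $\sum_k 2^k r_k<\infty$, you instead differentiate the fixed-point PGF equation of Lemma \ref{lemmapgflambda} at $x=1$ to obtain the mean identity $m\,(E[1/X]-c')=\sum_{l\geq 0}\nu[l+1,\infty)\,\chi_l$, subtract the identities for $c_1$ and $c_2$, and try to close a Lipschitz bound by a near-contraction. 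Your manipulations up to the display $\Delta\leq\frac{2}{\epsilon}\bigl(E[1/X]\,A+M_\epsilon|c_2-c_1|\bigr)$ are correct, and the uniform mean bound $M_\epsilon$ extracted from the identity is a nice observation.

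However, there is a genuine gap, and you flag it yourself: the needed estimate $A\leq\theta_\epsilon\Delta+C_\epsilon|c_2-c_1|$ with $\frac{2}{\epsilon}E[1/X]\,\theta_\epsilon<1$ is exactly where the difficulty lies, and you give heuristics (dust regime, mass away from small values, atoms at $1$) rather than a proof. As you correctly observe, the naive coupling bound carries a coefficient $2$ on $\Delta$ that the outer inequality cannot absorb, and iterating a per-step mean recursion fails because the increments $b_i$ need not be below $1/2$ for small $i$; but you do not show how to overcome either obstruction. That bound on $A$ is not a routine remainder --- it is the substance of the lemma. The paper's tree-coupling argument avoids your contraction obstacle entirely: rather than seeking a multiplicative contraction in the mean, it establishes a superexponential decay of $r_k$ (roughly $r_k\leq a^k + m_3^*\,2^{-k}\exp(-f(2)(1-\delta)(\log k)^2)$), which gives an absolutely convergent series and hence a bound on $m_{2,n}-m_{1,n}$ of order $|c_2-c_1|$ uniformly in $n$. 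Until you actually produce the estimate on $A$, the proof is incomplete at its central step.
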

\begin{proof}
Without loss of generality, suppose $0<c_1<c_2<E[1/X]$. It follows from $(\ref{lemma21gc1c2})$ that $ G_{c_i}(\tilde{\nu}_c^*)\succeq  G_{c}(\tilde{\nu}_c^*)=\tilde{\nu}_c^*$ when $c_i\geqslant c$ and $ G_{c_i}(\tilde{\nu}_c^*)\preceq  G_{c}(\tilde{\nu}_c^*)=\tilde{\nu}_c^*$ when $c\geqslant c_i$ for $i=1,2$.
Since $G_{c_i}$ is monotone, the sequence of iterates $\{G_{c_i}(\tilde{\nu}_c^*)\}_{n=1}^\infty$ is monotone. Therefore, there exist distributions $\tilde{\nu}_{c_i}^*$ on $\mathbb{N}^+\cup\{\infty\}$ such that for $i=1,2$,
\begin{equation}\label{glimitlemma5}
G_{c_i}^n(\tilde{\nu}_c^*)\to_d\tilde{\nu}_{c_i}^*,~\text{as }n\to\infty.
\end{equation}

Consider a binary tree with depth $n$. The root node of the tree has only one child, the leaf nodes of the tree have no children, and every other node of the tree has two children. Each child node is connected to its parent node by one edge. We will view this binary tree as a species tree, and we will keep track of the coalescence of gene lineages within each species. Let the number of genes at each leaf node be independent and identically distributed with distribution $\tilde{\nu}_c^*\ast\tilde{\nu}_c^*$, which is the convolution of $\tilde{\nu}_c^*$ with itself. We define the root of the tree to be at level 0, and the leaves of the tree to be at level $n$.

We consider two processes. In Model 1, the edge lengths are independent and exponentially distributed with parameter $c_1$. In Model 2, the edge lengths are independent and exponentially distributed with parameter $c_2$. For both models, the gene lineages at a child node will follow the $\Lambda$-coalescent along the edge connected to its parent node. When reaching a parent node, the remaining gene lineages from its two children will be added together. Therefore, the distributions of the numbers of gene lineages at the root nodes of Model 1 and Model 2 are $G_{c_1}^n(\tilde{\nu}_c^*)$ and $G_{c_2}^n(\tilde{\nu}_c^*)$ respectively.

We can couple these two processes. For each edge in the same position in Model 1 and Model 2, we choose an $\text{Exp}(c_2)$ random variable, which will be the edge length in Model 2. With probability $c_1/c_2$, the edge length in Model 1 will be the same. With probability $1-c_1/c_2$, the edge length in Model 1 will be the edge length in Model 2 plus another exponential random variable with rate $c_1$. We mark the edges that have different lengths in Model 1 and Model 2, and color the part of the marked edges where Model 1 is longer than Model 2 red in Model 1. Therefore, each unmarked edge and the uncolored part of each marked edge in Model 1 and Model 2 have the same length. 

\begin{figure}[ht]
\centering
\captionsetup{width=.95\linewidth,justification=centering} \includegraphics[width = 120mm]{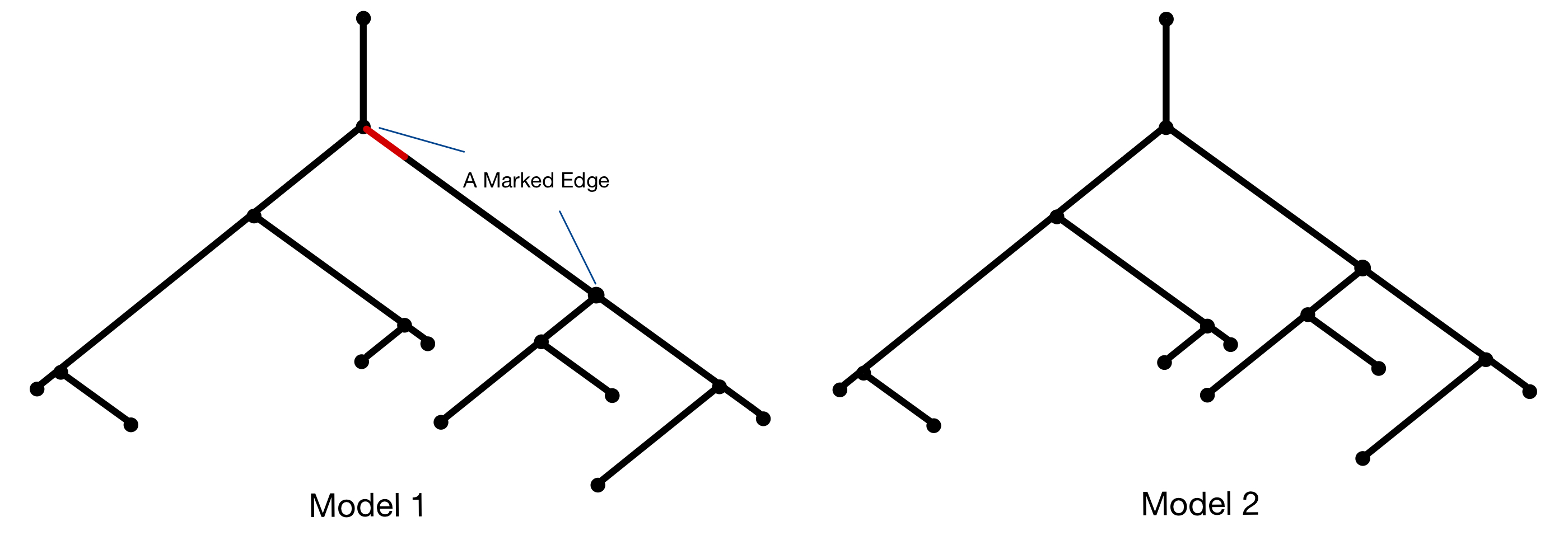}
\caption{Two binary species trees representing Model 1 and Model 2.}
\label{fig.5}
\end{figure}

Recall that the gene lineages in Model 1 follow the $\Lambda$-coalescent along each edge. If some gene lineages are involved in a merger that happens in the colored part of an edge in Model~1, we mark all but one of those gene lineages in Model 2. If some gene lineages are involved in a merger that happens in an unmarked edge or in the uncolored part of a marked edge in Model 1, we let the same number of unmarked gene lineages merge together in the same edge in Model 2. In this way, the unmarked gene lineages in Model 2 also follow the $\Lambda$-coalescent along each edge. Note that marked gene lineages might be involved in mergers to ensure that the gene lineages in Model 2 also follow the $\Lambda$-coalescent along each edge. 

Whenever a marked gene lineage participates in a merger of gene lineages that includes unmarked gene lineages, the mark will be eliminated. Recall that at a parent node at level $k$, the numbers of the gene lineages from the two edges connected to its children at level $k+1$ will be added up. Let a point at level $k+$ denote the point of an edge immediately before two edges merge. Note that there are $2^k$ nodes at level $k$ and $2^{k+1}$ points at level $k+$ for all $k\in\{1,\ldots,n-1\}$.
We can consider the distribution of the number of gene lineages at each point at level $k+$. Note that in Model 1, the number of gene lineages at each point at level $k+$ has distribution $G^{n-k}_{c_1}(\tilde{\nu}_c^*)$. In Model 2, the number of unmarked gene lineages at each point at level~$k+$ has distribution $G^{n-k}_{c_1}(\tilde{\nu}_c^*)$, and the total number of gene lineages at each point at level~$k+$ has distribution $G^{n-k}_{c_2}(\tilde{\nu}_c^*)$. 
 
Let $c_3=E[1/X]-\epsilon$, where $\epsilon$ is chosen to be small enough such that $c,c_1,c_2\leqslant c_3$ and $c_1,c_2>\epsilon$. By a similar argument as the one in (\ref{glimitlemma5}), there exists a distribution $\tilde{\nu}_{c_3}^*$ on $\mathbb{N}^+\cup\{\infty\}$ such that $G_{c_i}^n(\tilde{\nu}_c^*)\to_d\tilde{\nu}_{c_i}^*,~\text{as }n\to\infty$.
Let $m^*$ be mean of $\tilde{\nu}_c^*$. Let $m_{i,n}$ be the mean of $G_{c_i}^n(\tilde{\nu}_c^*)$ for $n\geqslant 1$. Let $m_{i,0}=m^*$. Then the sequence $\{m_{i,n}\}_{n=0}^\infty$ is monotone. Let $m_i^*$ be the mean of $\tilde{\nu}_{c_i}^*$. By the Monotone Convergence Theorem, we have
\begin{equation}\label{limm123}
	\lim_{n\to\infty}m_{i,n}= m_i^*, i=1,2,3.
\end{equation}
Since $c,c_2\leqslant c_3$, we have $m_{2,n}\leqslant \max\{m_2^*,m^*\}\leqslant m_3^*$. 
It follows that the expected number of total gene lineages at each point at level $k+$ is bounded above by $m_3^*$ for all $k\in\{0,\ldots,n-1\}$ in Model~2. Therefore, the expected number of marked gene lineages at each node is bounded above by $2m_3^*$. Conditional on having $i$ marked edges between level $k$ and level $k+1$, the expected number of gene lineages that get marked between level $k$ and level $k+1$ is bounded above by $2im_3^*$. 
Consider the case that the number of gene lineages at each point at level $k$ is 2 and all gene lineages at level~$k$ are unmarked except a randomly chosen one. Then there is one marked gene lineage at level $k$. Let $r_k$ be the probability that this marked gene lineage remains marked at the root of the tree. Note that the probability for a marked gene lineage at level $k$ to survive to the root decreases when the number of unmarked gene lineages at level $k$ increases and the lower bound of the number of unmarked gene lineages at each point at level $k$ is 2. Note that the number of marked gene lineages also decreases when a merger that only involves marked gene lineages happens. Therefore, if there are $j$ marked gene lineages at level $k$, then the expected number of marked gene lineages that could survive to the root of the tree is bounded above by $jr_k$. Let $M_k$ be the number of marked edges between level $k$ and level $k+1$. Then $M_k\sim\text{Binomial}(2^k,1-c_1/c_2)$ and
$$E[M_k]=2^{k}\frac{c_2-c_1}{c_2}.$$
Let $l_k$ be the expected number of the gene lineages that get marked between level $k$ and level~$k+1$ and remain marked at the root of the tree in Model 2. Then
$$l_k\leqslant 2E[M_k]m_3^*r_k.$$
Note that $m_{2,n}-m_{1,n}$ is the expected number of marked gene lineages at the root of the tree in Model 2.  Therefore,
\begin{align}\label{EX123321}
m_{2,n}-m_{1,n}=\sum_{k=0}^{n-1}l_k\leqslant\frac{2(c_2-c_1)m_3^*}{c_2}\sum_{k=0}^{n-1}2^{k}r_k.
\end{align}	

To bound $r_k$, we will introduce three new models, which will be called Model 3, Model~4 and Model 5.  
In these three models, we consider one binary tree which is mostly the same as the binary tree in Model 2, but it has depth $k$ instead of depth $n$. That is, the edge lengths are independently and exponentially distributed with rate $c_2$. Let the number of gene lineages at each leaf node be 2.
The gene lineages follow the $\Lambda$-coalescent along each edge and add together once reaching a parent node. Randomly choose one leaf node and mark one of the gene lineage at that leaf node.

Model 3 and Model 4 are different only in the way that we treat the mark.
In Model~3, when the marked gene lineage participates in a merger, the mark is eliminated. In Model~4, when the marked gene lineage participates in a merger of $l$ gene lineages, the mark is retained with probability $1/l$. Model 5 is the same as Model 4, except that any merger that involves the marked gene lineage but does not cause the mark to be lost gets suppressed altogether. Let $A_i$ be the event that the mark survives to reach the root in Model $i$. 
Note that
\begin{equation}\label{rkA3}
r_k= P(A_3).
\end{equation}
In Model 4, we can consider the gene lineages as labeled with ``marked" or ``unmarked". When gene lineages merge, we choose one of the merging gene lineages at random to have its label preserved. Then each gene lineage at depth $k$ is equally likely to have its label preserved at the root. Since there are $2^k$ leaves in total, if all gene lineages at one leaf node chosen at random are marked, 
the expected number of marked gene lineages at the root is $m_{2,k}/2^{k}$. 
Since in Model 4, only one gene lineage at one leaf node is marked, we have
 \begin{equation}\label{A4111}
P(A_4)\leqslant \frac{m_{2,k}}{2^{k}}\leqslant \frac{\max\{m_2^*,m^*\}}{2^k}\leqslant \frac{m_3^*}{2^k}.
\end{equation}

If the marked gene lineage survives to reach the root in Model 3, then there is no merger that involves the marked gene lineage. It follows that the marked gene lineage also survives to reach the root in Models 4 and 5. If the mark survives in Model 5, then there is either no merger that involves the marked gene lineage or all mergers that involve the marked gene lineage do not cause the mark to be lost, which implies that the mark also survives in Model 4. Therefore
\begin{equation*}\label{A345111}
A_3\subset A_5\subset A_4.
\end{equation*}
Let $T$ be the distance from the leaf that has the marked gene lineage to the root. Since the length of each edge has an $\text{Exp}(c_2)$ distribution, we have $T\sim$ Gamma$(k,c_2)$. 
Let $N(t)$ be the number of gene lineages on the same branch as the marked gene lineage after time $t$ in Model 5. Let $\mathcal{F}$ be the $\sigma-$algebra consisting of all information in Model 5. Let $f(n)$ be the rate, when there are $n$ gene lineages, of mergers that include the marked gene lineage but do not cause the mark to be lost in Model 4. Note that these mergers are not visible in Model 5. It is easy to see that $f(n)$ increases as $n$ increases. Therefore the rate of mergers that get suppressed in Model 5 at time $t$ is $f(N(t))$. If the mark survives in Model 5, then it does not survive in Model 3 if there exist some mergers involving the marked gene lineage, and all mergers involving the marked gene lineage do not cause the mark to be lost. Therefore,
\begin{equation}\label{A3111}
P(A_3| \mathcal{F})\bold{1}_{A_5}=\left(1-P(A_3^C|\mathcal{F})\right)\bold{1}_{A_5}=\exp\left(-\int_0^Tf(N(t))dt\right)\bold{1}_{A_5}.	
\end{equation}

Let $K$ be the number of edges that have length at most $(\log k)^2/k$ between the root and the leaf node which has the marked gene lineage. Then $K\sim\text{Binomial}(k,1-e^{-c_2(\log k)^2/k})$. It follows that for all $\delta>0$,
$$P(K\geqslant \lfloor\delta k\rfloor)\leqslant\binom{k}{\lfloor\delta k\rfloor}(1-e^{-c_2(\log k)^2/k})^{\lfloor\delta k\rfloor}\leqslant\frac{(k(1-e^{-c_2(\log k)^2/k}))^{\lfloor\delta k\rfloor}}{\lfloor\delta k\rfloor!}\leqslant\frac{(c_2(\log k)^2)^{\lfloor\delta k\rfloor}}{\lfloor\delta k\rfloor!}.$$
By Stirling's approximation, we have $\lfloor\delta k\rfloor!\sim\sqrt{2\pi\lfloor\delta k\rfloor }(\lfloor\delta k\rfloor/e)^{\lfloor\delta k\rfloor}$ as $k\to\infty$. Therefore, for any $a>0$, we can choose $k$ large enough such that 
\begin{equation}\label{K123}
	P(K\geqslant \lfloor\delta k\rfloor)\leqslant a^k.
\end{equation}
If $K<\lfloor\delta k\rfloor$, then the number of edges that have length at least $(\log k)^2/k$ is no less than $\lceil(1-\delta)k\rceil$, which implies that $T\geqslant (1-\delta)(\log k)^2$. Therefore,
\begin{equation}\label{KTsubset123}
	\{K< \lfloor\delta k\rfloor\}\subset\left\{T\geqslant (1-\delta)(\log k)^2\right\}.
\end{equation}
If there exists a merger that involves all gene lineages on the same branch as the marked gene lineage at some time in Model 5 and does not cause the mark to be lost, the merger should get suppressed altogether. It follows that on $A_5$,
\begin{equation}\label{Nt123}
N(t)\geqslant2,~\forall t\in[0,T].
\end{equation}
By $(\ref{A3111})$, $(\ref{KTsubset123})$ and $(\ref{Nt123})$, we have
\begin{align}\label{A3KT123}
P(A_3  \cap\{K< \lfloor\delta k\rfloor\}| A_5)&\leqslant P(A_3 \cap\{T\geqslant(1-\delta)(\log k)^2\}|A_5)\nonumber\\
&\leqslant P(A_3  |\{T\geqslant(1-\delta)(\log k)^2\}\cap A_5)\nonumber\\
&\leqslant \exp\left(-f(2)(1-\delta)(\log k)^2\right).
\end{align}
Recall from (\ref{A3111}) that $A_4\subset A_5$. Then
\begin{align}\label{r_k123}
P(A_3)&=P(A_3  \cap\{K\geqslant \lfloor\delta k\rfloor\})+ P(A_3  \cap\{K< \lfloor\delta k\rfloor\})\nonumber\\
&\leqslant P(K\geqslant \lfloor\delta k\rfloor)+P(A_5)P(A_3  \cap\{K<\lfloor\delta k\rfloor\}|A_5)\nonumber\\
&\leqslant P(K\geqslant \lfloor\delta k\rfloor)+P(A_4)P(A_3  \cap\{K<\lfloor\delta k\rfloor\}|A_5).
\end{align}
By $(\ref{rkA3})$, $(\ref{A4111})$, $(\ref{K123})$, $(\ref{A3KT123})$ and $(\ref{r_k123})$, we have for sufficiently large $k$,
\begin{equation*}
	r_k=P(A_3)\leqslant a^k+\frac{m_3^*}{2^k}\exp\left(-f(2)(1-\delta)(\log k)^2\right).
\end{equation*}
Since $a$ can be chosen to be arbitrarily small, and $\exp\left(-f(2)(1-\delta_1)(\log k)^2\right)$ is asymptotically much smaller than $k^{-b}$ for any $b>0$, there exists some constant $d$ such that  
\begin{equation}\label{m3*dbound11}
\sum_{k=0}^{\infty}2^{k}r_k\leqslant m_3^*d.  
\end{equation}
By (\ref{limm123}), $(\ref{EX123321})$ and (\ref{m3*dbound11}), we have 
\begin{align*}
m_2^*-m_1^*=\lim_{n\to\infty}(m_{2,n}-m_{1,n})\leqslant \frac{2(c_2-c_1)(m_3^*)^2}{c_2}d.
\end{align*}	
Let $K_\epsilon=2(m_3^*)^2d/\epsilon$. Then 
$$|m_2^*-m_1^*|\leqslant K_{\epsilon}|c_2-c_1|.$$
The result follows.
\end{proof}

\begin{Lemma}\label{lemma23cont}
	Let $0<c<E[1/X]$. Suppose that $\tilde{\nu}^*_{c}\in\mathcal{S}_1$ is a distribution satisfies $\tilde{\nu}^*_{c}=G_{c}(\tilde{\nu}^*_{c})$. For $c_1,c_2\in(0,E[1/X])$, the limits $\lim_{n\to\infty}G_{c_i}^n(\tilde{\nu}_c^*)$, $i=1,2$ exist. Denote the limits by $\tilde{\nu}^*_{c_i}$, $i=1,2$. 
	For any given $c_1\in(0,E[1/X])$ and $\epsilon>0$, there exists $\delta>0$ such that for all $c_2\in(c_1-\delta,c_1+\delta)$, the distribution $\tilde{\nu}^*_{c_2}$ satisfies the following property.	 
	Let $\tilde{R}_{c_1}$ and $\tilde{R}_{c_2}$ be the probability generating functions of $\tilde{\nu}^*_{c_1}$ and $\tilde{\nu}^*_{c_2}$ respectively. Then 
$$|\tilde{R}_{c_1}(x)-\tilde{R}_{c_2}(x)|<\epsilon, ~\forall x\in[0,1].$$	 
\end{Lemma}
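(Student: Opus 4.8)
\emph{Proof proposal.} The plan is to promote the Lipschitz estimate for the \emph{means} supplied by Lemma~\ref{lemma22diff1} to a uniform estimate for the probability generating functions, using the monotonicity of $c\mapsto G_c$ to pin down the \emph{sign} of the discrepancy between $\tilde\nu^*_{c_1}$ and $\tilde\nu^*_{c_2}$; without that sign control, a bound on the difference of means would not by itself control the generating functions.

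We may assume $c_1\neq c_2$, and by symmetry $c_1<c_2$. Recall from $(\ref{lemma21gc1c2})$ that $G_{c_1}(\mu)\preceq G_{c_2}(\mu)$ for every $\mu\in\mathcal{S}$ when $c_1<c_2$, since a larger rate gives a shorter exponential time and hence more surviving lineages. Running the induction in the proof of Lemma~\ref{lemma21now} starting from $\tilde\nu^*_c$ instead of $\delta_1$ (using that $G_{c_2}$ is monotone) yields $G_{c_1}^n(\tilde\nu^*_c)\preceq G_{c_2}^n(\tilde\nu^*_c)$ for all $n$, and letting $n\to\infty$ gives $\tilde\nu^*_{c_1}\preceq\tilde\nu^*_{c_2}$. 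By Lemma~\ref{lemma22diff1} both limiting laws have finite mean (in particular no mass at $\infty$), so $\tilde R_{c_1}$ and $\tilde R_{c_2}$ are bona fide generating functions of distributions in $\mathcal{S}_1$. Writing $W_i\sim\tilde\nu^*_{c_i}$ and using $1-x^{W_i}=(1-x)\sum_{k=0}^{W_i-1}x^k$ for $x\in[0,1]$, Tonelli's theorem gives
\begin{equation*}
1-\tilde R_{c_i}(x)=(1-x)\sum_{k=0}^{\infty}x^k\,\tilde\nu^*_{c_i}\big([k+1,\infty)\big),\qquad x\in[0,1].
\end{equation*}
Subtracting, and using that $\tilde\nu^*_{c_1}\preceq\tilde\nu^*_{c_2}$ makes every term of the difference nonnegative,
\begin{equation*}
0\le \tilde R_{c_1}(x)-\tilde R_{c_2}(x)=(1-x)\sum_{k=0}^{\infty}x^k\Big(\tilde\nu^*_{c_2}\big([k+1,\infty)\big)-\tilde\nu^*_{c_1}\big([k+1,\infty)\big)\Big)\le m_2^*-m_1^*,
\end{equation*}
where the last inequality uses $0\le(1-x)x^k\le 1$ together with $\sum_{k\ge 0}\tilde\nu^*_{c_i}([k+1,\infty))=m_i^*$. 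Hence $|\tilde R_{c_1}(x)-\tilde R_{c_2}(x)|\le|m_2^*-m_1^*|$ for all $x\in[0,1]$, and the same bound holds when $c_2<c_1$ by exchanging the two rates.

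To finish, fix $c_1\in(0,E[1/X])$ and the given $c\in(0,E[1/X])$, and choose $\epsilon'>0$ small enough that $c\le E[1/X]-2\epsilon'$ and $2\epsilon'\le c_1\le E[1/X]-2\epsilon'$. Then any $c_2$ with $|c_2-c_1|\le\epsilon'$ satisfies $c_2\in[\epsilon',E[1/X]-\epsilon']$, so Lemma~\ref{lemma22diff1} applies with this $\epsilon'$ and gives $|m_2^*-m_1^*|\le K_{\epsilon'}|c_2-c_1|$. Combining with the previous paragraph, $|\tilde R_{c_1}(x)-\tilde R_{c_2}(x)|\le K_{\epsilon'}|c_2-c_1|$ uniformly in $x\in[0,1]$, and taking $\delta=\min\{\epsilon',\,\epsilon/K_{\epsilon'}\}$ gives the claim. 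The bulk of the difficulty is already absorbed into Lemma~\ref{lemma22diff1}; the only genuinely delicate points left are the bookkeeping that keeps $c,c_1,c_2$ inside the range where $K_{\epsilon'}$ is valid, and the observation that the monotonicity in the rate forces the tail differences to be one-signed, so that the single scalar $m_2^*-m_1^*$ dominates the whole $\ell^1$-mass of the tail difference and hence the sup-norm gap between the generating functions.
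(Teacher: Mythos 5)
Your proposal is correct and follows essentially the same route as the paper: establish the stochastic ordering $\tilde\nu^*_{c_1}\preceq\tilde\nu^*_{c_2}$ (for $c_1<c_2$) by iterating the monotonicity argument from Lemma~\ref{lemma21now} starting at $\tilde\nu^*_c$, reduce the sup-norm gap of the generating functions to the one-signed tail differences (you via the identity $1-R(x)=(1-x)\sum_k x^k\,\nu([k+1,\infty))$, the paper via a derivative rearrangement, but the content is the same), bound this by the difference of means, and finish with the Lipschitz estimate from Lemma~\ref{lemma22diff1}. The only stylistic difference is that the paper rederives $\tilde\nu^*_{c_i}\in\mathcal{S}_1$ from scratch using the $b_{i,j}$ estimates from Lemma~\ref{lemma17uniquep1}, while you extract finiteness of the means from Lemma~\ref{lemma22diff1} (taking one of the rates equal to $c$); both are valid, with yours being the lighter bookkeeping.
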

\begin{proof}
By (\ref{glimitlemma5}), we have $\lim_{n\to\infty}G_{c_i}^n(\tilde{\nu}^*_{c})$, $i=1,2$ exists on $\mathcal{S}$. By a similar argument to the one in (\ref{lemma18proof111}), 
we have $\tilde{\nu}_{c_i}^*=G_{c_i}(\tilde{\nu}_{c_i}^*)$.

Now we will show that $\tilde{\nu}_{c_i}^*\in\mathcal{S}_1$ by similar arguments to the one in the proof of Lemma~\ref{lemma18nup1}. Let $Y_i\sim\text{Exp}(c_i),i=1,2$. Let
$$b_{i,j}\coloneqq E[L_{j+1}(Y_i)-L_{j}(Y_i)],~j\in\mathbb{N}^+,~i=1,2.$$ 
Since $c_i<E[1/X]$, it follows from Lemma \ref{lemma17uniquep1} that there exists $\Delta>0$ and $J\in\mathbb{N}^+$ such that
\begin{equation*}\label{M1}
	b_{i,j}<\frac{1}{2}-\Delta, ~\forall j\geqslant J,~i=1,2.
\end{equation*} 
Let $X^*_{i,n}$ and $\bar{X}^*_{i,n}$ be two independent random variables with distribution $G_{c_i}^n(\tilde{\nu}^*_{c})$. Then
\begin{align*}
E[X^*_{i,n+1}]&=\sum_{j=2}^{J} P(X^*_{i,n}+\bar{X}^*_{i,n}=j)E[L_{j}(Y_2)]+\sum_{j=J+1}^\infty P(X^*_{i,n}+\bar{X}^*_{i,n}=j)E[L_{j}(Y_2)].\nonumber\\
&< \sum_{j=2}^J JP(X^*_{i,n}+\bar{X}^*_{i,n}=j)+\sum_{j=J+1}^\infty P(X^*_{i,n}+\bar{X}^*_{i,n}=j)\left( J+\left(\frac{1}{2}-\Delta\right)(j-J)\right)\nonumber\\
&< \left(\frac{3}{2}+\Delta\right)J+(1-2\Delta)E[X^*_{i,n}]. 
\end{align*}
It follows that the sequences $\{E[X^*_{i,n}]\}_{n=1}^\infty$, $i=1,2$ have upper bounds. If $c<c_i<E[1/X]$, by $(\ref{lemma21gc1c2})$, we have 
$$\tilde{\nu}^*_{c}=G_c(\tilde{\nu}_{c}^*)\preceq G_{c_i}(\tilde{\nu}_{c}^*).$$
Since $G_{c_i}$ is monotone, the sequence of iterates $\{G_{c_i}^n(\tilde{\nu}_c^*)\}_{n=0}^\infty$ is increasing, which implies that $\{E[X^*_{i,n}]\}_{n=1}^\infty$ is an increasing sequence. Therefore the limit $\lim_{n\to\infty}E[X^*_{i,n}]$ exists and is finite. By the Monotone Convergence Theorem, the limit $\lim_{n\to\infty}E[X^*_{i,n}]$ equals the mean of $\tilde{\nu}^*_{c_i}$, which implies that
$$\tilde{\nu}^*_{c_i}\in\mathcal{S}_1,~i=1,2.$$
If $0<c_i<c$, by $(\ref{lemma21gc1c2})$, we have
$$\tilde{\nu}^*_{c}=G_{c}(\tilde{\nu}_{c}^*)\succeq G_{c_i}(\tilde{\nu}_{c}^*).$$
Since $G_{c_i}$ is monotone, the sequence of iterates $\{G_{c_i}^n(\tilde{\nu}_c^*)\}_{n=0}^\infty$ is decreasing, 
which implies that $\{E[X^*_{i,n}]\}_{n=1}^\infty$ is a decreasing nonnegative sequence. Therefore the mean of $\tilde{\nu}_{c_i}^*$ is less than the mean of $\tilde{\nu}_c^*$. It follows that 
$$\tilde{\nu}^*_{c_i}\in\mathcal{S}_1,~i=1,2.$$
If $c_i=c$, then
$$\tilde{\nu}^*_{c_i}=\tilde{\nu}^*_c\in\mathcal{S}_1.$$
Note that
\begin{align*}
		\tilde{R}_{c_2}'(x)-&\tilde{R}_{c_1}'(x)\nonumber\\
		&=\sum_{n=1}^\infty(\tilde{\nu}^*_{c_2}(\{n\})-\tilde{\nu}^*_{c_1}(\{n\}))nx^{n-1}\nonumber\\
		&=\sum_{i=1}^\infty\left(\left(\tilde{\nu}^*_{c_2}[i,\infty)-\tilde{\nu}^*_{c_1}[i,\infty)\right)x^{i-1}-(1-x)\sum_{n=i+1}^\infty\left(\tilde{\nu}^*_{c_2}[n,\infty)-\tilde{\nu}^*_{c_1}[n,\infty)\right)x^{n-2}\right).
\end{align*}
If $c_1\leqslant c_2<E[1/X]$,
then $\tilde{\nu}^*_{c_1}=\lim_{n\to\infty}G_{c_1}^n(\tilde{\nu}_c^*)\preceq\lim_{n\to\infty}G_{c_2}^n(\tilde{\nu}_c^*)=\tilde{\nu}^*_{c_2}$, which implies
$$\tilde{\nu}^*_{c_2}[i,\infty)-\tilde{\nu}^*_{c_1}[i,\infty)\geqslant0,~ \forall i\in\mathbb{N}^+.$$
Then $\left(\tilde{\nu}^*_{c_2}[i,\infty)-\tilde{\nu}^*_{c_1}[i,\infty)\right)x^{i-1}$ is nondecreasing and $(1-x)\sum_{n=i+1}^\infty(\tilde{\nu}^*_{c_2}[n,\infty)-\tilde{\nu}^*_{c_1}[n,\infty))x^{n-2}$ is minimized at $x=1$ on $[0,1]$ for all $i\in\mathbb{N}^+$, which implies that $R_{c_2}'(x)-R_{c_1}'(x)$ is maximized at $x=1$ on $[0,1]$. It follows that 
$$0\leqslant \tilde{R}_{c_1}(x)-\tilde{R}_{c_2}(x)\leqslant (\tilde{R}_{c_2}'(1)-\tilde{R}_{c_1}'(1))(1-x), ~\forall x\in[0,1].$$ 
If $0<c_2<c_1$, then $\tilde{\nu}^*_{c_1}=\lim_{n\to\infty}G_{c_1}^n(\tilde{\nu}_c^*)\succeq\lim_{n\to\infty}G_{c_2}^n(\tilde{\nu}_c^*)=\tilde{\nu}^*_{c_2}$. By a similar argument, we get 
$$0\leqslant \tilde{R}_{c_2}(x)-\tilde{R}_{c_1}(x)\leqslant (\tilde{R}_{c_1}'(1)-\tilde{R}_{c_2}'(1))(1-x),~\forall x\in[0,1].$$ 
Therefore, for $c_2\in(0,E[1/X])$, we have 
	\begin{align}\label{lemma4equ11}
		|\tilde{R}_{c_2}(x)-\tilde{R}_{c_1}(x)|\leqslant |\tilde{R}_{c_2}'(1)-\tilde{R}_{c_1}'(1)|(1-x)\leqslant |\tilde{R}_{c_2}'(1)-\tilde{R}_{c_1}'(1)| , ~\forall x\in[0,1].
	\end{align}	
Note that $\tilde{R}'_{c_1}(1)$ and $\tilde{R}'_{c_2}(1)$ are the means of $\tilde{\nu}^*_{c_1}$ and $\tilde{\nu}^*_{c_2}$ respectively. By Lemma \ref{lemma22diff1} and $(\ref{lemma4equ11})$, for all $\epsilon>0$, there exists $\delta>0$, such that for all $c_2\in(c_1-\delta,c_1+\delta)$, 
$$|\tilde{R}_{c_2}(x)-\tilde{R}_{c_1}(x)|\leqslant  |\tilde{R}_{c_2}'(1)-\tilde{R}_{c_1}'(1)|<\epsilon, ~\forall x\in[0,1].$$
\end{proof}

\subsection{Proof of Theorem \ref{theo4unilambda}}
\begin{proof}[Proof of Theorem \ref{theo4unilambda}]
The existence has been proved in Lemma  \ref{lemma18nup1}.
We will show the uniqueness in the rest of the proof. Let the probability generating function of $\nu^*_c$ be $R_c$. Assume that there exists another distribution $\tilde{\nu}_c^*\in\mathcal{S}_1$ such that independent variables $W$, $W_1$ and $W_2$ with distribution $\tilde{\nu}_c^*$ satisfy $(\ref{theoremequ1})$. Then
	 \begin{equation*}\label{theorem5eq11}
	 	\tilde{\nu}_c^*=G_c(\tilde{\nu}_c^*).
	 \end{equation*}
	  Let the probability generating function of  $\tilde{\nu}_c^*$ be $\tilde{R}_c$. By Lemma \ref{lemmapgflambda}, for all $x\in[0,1]$, the function $R_c$ satisfies the equation 
\begin{equation*}
E\left[\frac{R_c(x)-R_c((1-X)x)-xR_c(X+(1-X)x)+xR_c((1-X)x)}{X^2}\right]=c(R_c^2(x)-R_c(x)).
\end{equation*}
Note that $R_c(x)-R_c((1-X)x)-xR_c(X+(1-X)x)+xR_c((1-X)x)$ equals 0 at $x=1$. It follows that 
\begin{align}\label{theo24dct1}
&c(2R_c(1)R_c'(1)-R_c'(1))\nonumber\\
&\quad=\lim_{x\to1^-}\frac{1}{1-x}\left(0-E\left[\frac{R_c(x)-R_c((1-X)x)-xR_c(X+(1-X)x)+xR_c((1-X)x)}{X^2}\right]\right)\nonumber\\
&\quad=	\lim_{x\to1^-}\left(-E\left[\frac{(1-x)(R_c(x)-R_c((1-X)x))}{(1-x)X^2}\right]-E\left[\frac{x(R_c(x)-R_c(X+(1-X)x))}{(1-x)X^2}\right]\right).\nonumber\\
\end{align}
Note that probability generating functions have an increasing first derivative. Also note that if $0\leqslant x\leqslant 1$, then $(1-X)x\leqslant x\leqslant X+(1-X)x$. 
Therefore,
$$ 0\leqslant \frac{(1-x)(R_c(x)-R_c((1-X)x))}{(1-x)X^2}\leqslant \frac{R_c'(1)Xx}{X^2}\leqslant R_c'(1)\frac{1}{X},~\forall x\in[0,1],$$
and
$$ 0\geqslant\frac{x(R_c(x)-R_c(X+(1-X)x))}{(1-x)X^2}\geqslant -\frac{xR_c'(1)(X-Xx)}{(1-x)X^2}\geqslant -R_c'(1)\frac{1}{X},~\forall x\in[0,1].$$
Since $\nu_c^*$ and $\tilde{\nu}_c^*$ are distributions in $\mathcal{S}_1$, they have finite mean, which implies $R_c'(1)$ and $\tilde{R}_c'(1)$ are finite. Then by the Dominated Convergence Theorem, the right-hand side of (\ref{theo24dct1}) equals the expectation of the derivative of $(R_c(x)-R_c((1-X)x)-xR_c(X+(1-X)x)+xR_c((1-X)x))/X^2$ with respect to $x$ at $x=1$, which is 
\begin{align}\label{theo24dct2}
&E\left[\frac{R_c'(1)-(1-X)R_c'((1-X))-R_c(1)+R(1-X)-(1-X)R_c'(1)+(1-X)R_c'(1-X)}{X^2}\right]\nonumber\\
	&\quad\quad\quad\quad\quad\quad\quad\quad\quad\quad\quad\quad\quad\quad\quad\quad\quad\quad\quad =E\left[\frac{XR_c'(1)-R_c(1)+R_c(1-X)}{X^2}\right].
\end{align}
Note that (\ref{theo24dct1}) and  (\ref{theo24dct2}) also hold with $R_c$ replaced by $\tilde{R}_c$. 
Since $R_c$ and $\tilde{R}_c$ are probability generating functions, we have $R_c(1)=1$ and $\tilde{R}_c(1)=1$. Combining (\ref{theo24dct1}) and (\ref{theo24dct2}), we have 
	\begin{equation}\label{exp9}
		R_c'(1)=\frac{E\left[\frac{1-R_c(1-X)}{X^2}\right]}{E\left[\frac{1}{X}\right]-c}, \text{ and } \tilde{R}_c'(1)=\frac{E\left[\frac{1-\tilde{R}_c(1-X)}{X^2}\right]}{E\left[\frac{1}{X}\right]-c}.
	\end{equation}
Because of the monotonicity of $G_c$, we have $\tilde{\nu}_c^*=\lim_{n\to\infty}G_c^n(\tilde{\nu}_c^*)\succeq\lim_{n\to\infty}G_c^n(\delta_1)=\nu_c^*$. Since $\tilde{\nu}_c^*$ and $\nu^*_c$ are different distributions that have different means, we have 
	\begin{equation*}
		\tilde{R}_c(x)<R_c(x), ~\forall x\in(0,1),
	\end{equation*}
	and 
	\begin{equation*}\label{theorem5eq22}
		\tilde{R}_{c}'(1)>R_c'(1).
	\end{equation*}
	For $c_1\in(c,E[1/X])$, let $R_{c_1}$ be the probability generating function of $\lim_{n\to\infty}G_{c_1}^n(\delta_1)$. By Lemma~\ref{lemma21now},  we have $R_{c_1}(x)<R_c(x)$ for all $x\in(0,1)$. 
	Therefore, by $(\ref{exp9})$,
	$$R_{c_1}'(1)=\frac{E\left[\frac{1-R_{c_1}(1-X)}{X^2}\right]}{E\left[\frac{1}{X}\right]-c_1}\geqslant \frac{E\left[\frac{1-R_{c}(1-X)}{X^2}\right]}{E\left[\frac{1}{X}\right]-c_1}=\frac{R_c'(1)\left(E\left[\frac{1}{X}\right]-c\right)}{E\left[\frac{1}{X}\right]-c_1},$$
	which implies $\lim_{c_1\to E[1/X]}R_{c_1}'(1)=\infty$. 
	Recall that $\tilde{R}_{c}'(1)<\infty$ because $\tilde{\nu}_c^*\in\mathcal{S}_1$. Recall the definition of $G_0$ stated before Lemma \ref{lemma22diff1}. Note that $\delta_1$ satisfies $\delta_1=G_0(\delta_1)$.
	By Lemma \ref{lemma22diff1}, we have that $R_{c}'(1)$ is continuously increasing as a function of $c$. It follows that there exists a $c_1>c$ such that $R_{c_1}'(1)= \tilde{R}_{c}'(1)$. By $(\ref{exp9})$, we have 
	$$E\left[\frac{1-R_{c_1}(1-X))}{X^2}\right]=R_{c_1}'(1)\left(E\left[\frac{1}{X}\right]-c_1\right)<\tilde{R}_{c}'(1)\left(E\left[\frac{1}{X}\right]-c\right)=E\left[\frac{1-\tilde{R}_{c}(1-X))}{X^2}\right].$$
Then there exists a $c_2$ slightly larger than $c_1$ such that the probability generating function of $\lim_{n\to\infty}G_{c_2}^n(\delta_1)$, denoted by $R_{c_2}$, satisfies $R_{c_2}'(1)> \tilde{R}_c'(1)$ and
\begin{equation}\label{theo7eq111}
E\left[\frac{1-R_{c_2}(1-X)}{X^2}\right]<E\left[\frac{1-\tilde{R}_c(1-X)}{X^2}\right].
\end{equation}
If $R_{c_2}(x)\leqslant\tilde{R}_c(x)$ for all $x\in[0,1]$, then inequality $(\ref{theo7eq111})$ can not hold.  
Since $R_{c_2}'(1)> \tilde{R}_c'(1)$, there exists $\epsilon_1$ such that $R_{c_2}(x)<\tilde{R}_c(x)$ for all $x\in(1-\epsilon_1,1)$.
Let 
$$x_1\coloneqq \sup\{x\in(0,1):R_{c_2}(x)>\tilde{R}_c(x)\}.$$
Then $R_{c_2}(x)\leqslant \tilde{R}_c(x)$ on $(x_1,1),$ and $R_{c_2}(x)>\tilde{R}_c(x)$ on $(x_1-\epsilon_2,x_1)$ for some $\epsilon_2>0$. Let
$$x_2\coloneqq \sup\left\{\underset{x\in(0,1)}{\arg\max} \left(R_{c_2}(x)-\tilde{R}_c(x)\right)\right\}.$$
Then $0<x_2<x_1$ and $R_{c_2}(x_2)-\tilde{R}_c(x_2)>0$. 
Note that $R_{c_2}(x)$ and $\tilde{R}_c(x)$ both satisfy $(\ref{PGF})$. Then
\begin{align}\label{EQRC2123}
	E&\left[\frac{(R_{c_2}-\tilde{R}_c)(x_2)-x_2(R_{c_2}-\tilde{R}_c)(X+(1-X)x_2)-(1-x_2)(R_{c_2}-\tilde{R}_c)((1-X)x_2)}{X^2}\right]\nonumber\\
	&\quad\quad\quad\quad\quad\quad\quad\quad\quad\quad\quad\quad\quad\quad\quad\quad\quad=-c_2(R_{c_2}(x_2)-R_{c_2}^2(x_2))+c(\tilde{R}_c(x_2)-\tilde{R}_c^2(x_2)).
\end{align}
Since $x_2$ is the point that maximizes $R_{c_2}(x)-\tilde{R}_c(x)$, we have 
$$R_{c_2}(X+(1-X)x_2)-\tilde{R}_c(X+(1-X)x_2)\leqslant R_{c_2}(x_2)-\tilde{R}_c(x_2),$$ 
and 
$$R_{c_2}((1-X)x_2)-\tilde{R}_c((1-X)x_2)\leqslant R_{c_2}(x_2)-\tilde{R}_c(x_2).$$ 
It follows that the left-hand side of $(\ref{EQRC2123})$ is nonnegative.
Therefore,  
\begin{equation}\label{EQRC212345}
	c_2(R_{c_2}(x_2)-R_{c_2}^2(x_2))-c(\tilde{R}_c(x_2)-\tilde{R}_c^2(x_2))\leqslant0.
\end{equation}
We will discuss the following 2 different cases.

\textbf{Case 1:} $x_1\leqslant 1/2$. If $x_1\leqslant 1/2$, then $x_2<1/2$. Note that $R_{c_2}$ has a positive derivative  and is convex on the interval $(0,1)$ with $R_{c_2}(0)=0$ and $R_{c_2}(1)=1$. Then 
$$\tilde{R}_c(x_2)<R_{c_2}(x_2)<R_{c_2}\left(\frac{1}{2}\right)<\frac{1}{2}.$$
Recall that $c_2>c_1>c$. Since the mapping $y\mapsto y-y^2$ is increasing on (0,1/2), we have    
$$c_2(R_{c_2}(x_2)-R_{c_2}^2(x_2))-c(\tilde{R}_c(x_2)-\tilde{R}_c^2(x_2))>0.$$
This contradicts $(\ref{EQRC212345})$.

\textbf{Case 2:} $x_1>1/2$. For $c_3<c$, let $\tilde{R}_{c_3,1}$ be the probability generating function of $G_{c_3}(\tilde{\nu}_c^*)$. It is easy to see that
$$\lim_{c_3\to 0}\int_{1}^\infty x (G_{c_3}(\nu))(dx) = 1,~\forall \nu\in\mathcal{S}_1.$$
Then $\lim_{c_3\to 0}\tilde{R}_{c_3,1}'(1)=1$. Since $R_{c_2}$ is convex on $[0,1]$ with $R_{c_2}(1)=1$, the line connecting the two points $(1/2,R_{c_2}(1/2))$ and $(1,1)$ has slope larger than 1, and is above $R_{c_2}(x)$ for all $x\in[1/2,1]$. Since $\tilde{R}_{c_3,1}$ is convex on $[0,1]$ with $\tilde{R}_{c_3,1}(1)=1$, the line $y=\tilde{R}_{c_3,1}'(1)(x-1)+1$
is below $y=\tilde{R}_{c_3,1}$ for all $x\in[0,1]$. Therefore, there exists $c_3$ sufficiently small that $R_{c_2}(x)<\tilde{R}_{c_3,1}(x)$ for all $x\in[1/2,x_1]$. Note that $\lim_{n\to\infty}G_{c_3}^n(\tilde{\nu}_c^*)\preceq G_{c_3}(\tilde{\nu}_c^*)\preceq \tilde{\nu}_c^*$. Let $\tilde{R}_{c_3}$ be the probability generating function of $\lim_{n\to\infty}G_{c_3}^n(\tilde{\nu}_c^*)$. Then $\tilde{R}_{c_3}(x)\geqslant \tilde{R}_{c_3,1}(x)$ for all $x\in(0,1)$. Therefore,
\begin{equation}\label{TheoCase233}
	\max_{x\in[1/2,x_1]}(R_{c_2}(x)-\tilde{R}_{c_3}(x))<0.
\end{equation}
Recall that there exists $\epsilon_2>0$ such that $R_{c_2}(x)>\tilde{R}_c(x)$ on $(x_1-\epsilon_2,x_1)$. Since $x_1>1/2$, we have 
\begin{equation}\label{TheoCase244}
\max_{x\in[1/2,x_1]}(R_{c_2}(x)-\tilde{R}_{c}(x))>0.
\end{equation}
For $c_3'<c$, let $\tilde{R}_{c_3'}$ be the probability generating function of $\lim_{n\to\infty}G_{c_3'}^n(\tilde{\nu}_c^*)$. Then
\begin{align*}
	\max_{x\in[1/2,x_1]}\left(\tilde{R}_{c_3}(x)-\tilde{R}_{c_3'}(x)\right)+&\max_{x\in[1/2,x_1]}\left(R_{c_2}(x)-\tilde{R}_{c_3}(x)\right)\nonumber\\
	&\geqslant \max_{x\in[1/2,x_1]}\left(\tilde{R}_{c_3}(x)-\tilde{R}_{c_3'}(x)+R_{c_2}(x)-\tilde{R}_{c_3}(x)\right)\nonumber\\
	&=\max_{x\in[1/2,x_1]}\left(R_{c_2}(x)-\tilde{R}_{c_3'}(x)\right),
\end{align*}
and 
\begin{align*}
	\max_{x\in[1/2,x_1]}\left(\tilde{R}_{c_3'}(x)-\tilde{R}_{c_3}(x)\right)+&\max_{x\in[1/2,x_1]}\left(R_{c_2}(x)-\tilde{R}_{c_3'}(x)\right)\nonumber\\
	&\geqslant \max_{x\in[1/2,x_1]}\left(\tilde{R}_{c_3'}(x)-\tilde{R}_{c_3}(x)+R_{c_2}(x)-\tilde{R}_{c_3'}(x)\right)\nonumber\\
	&=\max_{x\in[1/2,x_1]}\left(R_{c_2}(x)-\tilde{R}_{c_3}(x)\right),
\end{align*}
which implies that 
\begin{align}\label{TheoCase211}
\left|\max_{x\in[1/2,x_1]}(R_{c_2}(x)-\tilde{R}_{c_3}(x))-\max_{x\in[1/2,x_1]}(R_{c_2}(x)-\tilde{R}_{c_3'}(x))\right|
\leqslant	\max_{x\in[1/2,x_1]}|\tilde{R}_{c_3}(x)-\tilde{R}_{c_3'}(x)|.
\end{align}
 By Lemma \ref{lemma23cont}, for any given $c_3$, for all $\epsilon>0$, there exists $\delta>0$, such that 
\begin{equation*}\label{TheoCase222}
	|\tilde{R}_{c_3}(x)-\tilde{R}_{c_3'}(x)| <\epsilon, ~\forall x\in[0,1],~\forall c_3'\in(c_3-\delta,c_3+\delta),
\end{equation*}
which implies 
\begin{equation}\label{TheoCase222}
	\max_{x\in[1/2,x_1]}|\tilde{R}_{c_3}(x)-\tilde{R}_{c_3'}(x)| <\epsilon.
\end{equation}
By $(\ref{TheoCase211})$ and $(\ref{TheoCase222})$, we have that $\max_{x\in[1/2,x_1]}(R_{c_2}(x)-\tilde{R}_{c_3}(x))$ is continuous with respect to $c_3$. By $(\ref{TheoCase233})$ and $(\ref{TheoCase244})$, we can find $c_3<c$ such that 
\begin{equation}\label{Case255}
	\max_{x\in[1/2,x_1]}(R_{c_2}(x)-\tilde{R}_{c_3}(x))=0.
\end{equation}

Since $c_3<c$, we have that $\lim_{n\to\infty}G_{c_3}^n(\tilde{\nu}_c^*)$ is stochastically dominated by $\tilde{\nu}_c^*$, and the two distributions are not the same, which implies $\tilde{R}_{c_3}(x)> \tilde{R}_{c}(x)$ for all $x\in(0,1)$. Recall that $R_{c_2}(x)\leqslant \tilde{R}_{c}(x)$ for all $x\in(x_1,1)$. Therefore,
\begin{equation}\label{Case2.13}
	R_{c_2}(x)<\tilde{R}_{c_3}(x),~\forall x\in(x_1,1).
\end{equation}
Let
$$x_3\coloneqq \sup\{x\in(0,1):R_{c_2}(x)>\tilde{R}_{c_3}(x)\}.$$
Let
$$x_4\coloneqq \sup\left\{\underset{x\in(0,1)}{\arg\max} \left(R_{c_2}(x)-\tilde{R}_{c_3}(x)\right)\right\}.$$
By $(\ref{Case255})$ and $(\ref{Case2.13})$, we have $x_3\in[1/2,x_1]$ and $R_{c_2}(x_4)-\tilde{R}_{c_3}(x_4)\geqslant0$. Note that inequality $(\ref{EQRC212345})$ still holds with $c$ replaced by $c_3$ and $x_2$ replaced by $x_4$. That is
\begin{equation}\label{EQRC2123456}
	c_2(R_{c_2}(x_4)-R_{c_2}^2(x_4))-c_3(\tilde{R}_{c_3}(x_4)-\tilde{R}_{c_3}^2(x_4))\leqslant0.
\end{equation}
We will discuss the following two subcases.

\textbf{Case 2.1:} $R_{c_2}(x_4)-\tilde{R}_{c_3}(x_4)>0$. By $(\ref{Case255})$ and $(\ref{Case2.13})$, we have $x_4<1/2$. Note that $R_{c_2}(x)$ has a positive derivative on the interval $(0,1)$. Then $\tilde{R}_{c_3}(x_4)<R_{c_2}(x_4)<R_{c_2}(1/2)<1/2$. Note that $c_3<c<c_2$. Then 
$$c_2(R_{c_2}(x_4)-R_{c_2}^2(x_4))-c_3(\tilde{R}_{c_3}(x_4)-\tilde{R}_{c_3}^2(x_4))>0.$$
This contradicts $(\ref{EQRC2123456})$.

\textbf{Case 2.2:} $R_{c_2}(x_4)-\tilde{R}_{c_3}(x_4)=0$. In this case, we have $x_4=x_3$. Since $0<R_{c_2}(x)<1$ for all $x\in(0,1)$ and $c_3<c_2$, we have
$$c_2(R_{c_2}(x_4)-R_{c_2}^2(x_4))-c_3(\tilde{R}_{c_3}(x_4)-\tilde{R}_{c_3}^2(x_4))=(c_2-c_3)(R_{c_2}(x_4)-R_{c_2}^2(x_4))>0.$$
This contradicts $(\ref{EQRC2123456})$.

Therefore, there won't be another distribution $\tilde{\nu}_c^*\in\mathcal{S}_1$ that satisfies our conditions. 
\end{proof}

\subsection{Proof of Theorem \ref{Theoexistinflambda}}
\begin{Lemma}\label{lemmabefore55}
The limit $\lim_{n\to\infty}G_c^n(\delta_\infty)$ exists. Let $\nu_c'\coloneqq\lim_{n\to\infty}G_c^n(\delta_\infty)$. Then $\nu_c'$ is a solution to (\ref{theoremequ1}). Suppose that $E[1/X]<\infty$. If $P(X=1)=0$, then $\nu_c'=\delta_\infty$. If $c>P(X=1)$, then $\nu_c'(\{\infty\})>0$. If $c\leqslant P(X=1)$, then $\nu_c'(\{\infty\})=0$.	
\end{Lemma}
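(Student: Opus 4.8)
The plan is to handle three things in turn. First, existence of the weak limit together with the fixed-point property, which uses only the monotonicity of $G_c$: the measure $\delta_\infty$ stochastically dominates every element of $\mathcal{S}$, and $G_c$ is monotone (expression (\ref{dist111}) is nondecreasing in $W_1,W_2$, as in the proof of Lemma \ref{lemma18nup1}), so $\{G_c^n(\delta_\infty)\}_{n\geqslant 0}$ is stochastically decreasing. By compactness of $\mathbb{N}^+\cup\{\infty\}$ it converges weakly, and since it is stochastically monotone it converges at every atom (for finite $k$ as a difference of convergent tails, and at $\infty$ because $G_c^n(\delta_\infty)([m,\infty])$ is nonincreasing in both $n$ and $m$), hence in total variation by Scheff\'e, to a limit $\nu_c'$. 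Passing to the limit in $G_c^{n+1}(\delta_\infty)=G_c\big(G_c^n(\delta_\infty)\big)$ exactly as around (\ref{lemma18proof111}) — using that convolution is total-variation continuous and that $P(L_i(Y)=k)\leqslant 1$ — then gives $G_c(\nu_c')=\nu_c'$. This part uses no hypothesis on $E[1/X]$.

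Second, a description of the $\Lambda$-coalescent started from infinitely many lineages. Assume $E[1/X]<\infty$ and write $p=P(X=1)$. In the Poisson construction of Section \ref{chap1.2introlambda} the atom of $\Lambda$ at $1$ produces points at rate $\int_{\{1\}}x^{-2}\Lambda(dx)=p$, and at each such point every block comes up heads, so all blocks coalesce into one; let $\bar T$ be the time of the first such point, so $\bar T\sim\text{Exp}(p)$ (and $\bar T=+\infty$ a.s.\ if $p=0$). Up to time $\bar T$ the coalescent is governed by $\Lambda$ restricted to $[0,1)$, and $\int_{[0,1)}x^{-1}\Lambda(dx)=E[1/X]-p<\infty$, so by the dust criterion recalled in Section \ref{chap1.2introlambda} a positive fraction of singletons — hence infinitely many blocks — persists at all times before $\bar T$. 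Consequently the number of blocks $L_\infty(t)$ of a $\Lambda$-coalescent started from the singletons of $\mathbb{N}^+$ equals $\infty$ for $t<\bar T$ and $1$ for $t\geqslant\bar T$, so for any $\mathbb{N}^+\cup\{\infty\}$-valued $m$ independent of $(\Pi,Y)$,
\[
\{L_m(Y)=\infty\}=\{m=\infty\}\cap\{Y<\bar T\},\qquad P(Y<\bar T)=\frac{c}{c+p}.
\]

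Third, the reduction to a scalar recursion. Put $a_n=G_c^n(\delta_\infty)(\{\infty\})$; then $a_0=1$ and, by the monotonicity above, $a_n\downarrow\nu_c'(\{\infty\})$. Taking $W_1,W_2$ i.i.d.\ with law $G_c^n(\delta_\infty)$ and independent of $(\Pi,Y)$, the event $\{W_1+W_2=\infty\}$ has probability $2a_n-a_n^2$ and is independent of $\{Y<\bar T\}$, so the previous paragraph yields
\[
a_{n+1}=(2a_n-a_n^2)\,\frac{c}{c+p}.
\]
If $p=0$ this forces $a_n\equiv 1$, so $G_c(\delta_\infty)=\delta_\infty$ and $\nu_c'=\delta_\infty$. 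If $p>0$, set $g(a)=\frac{c}{c+p}(2a-a^2)$; it is nondecreasing on $[0,1]$ and $a-g(a)=\frac{a}{c+p}\big(ca-(c-p)\big)$, so its fixed points in $[0,1]$ are $0$ and, when $c>p$, also $1-p/c$. Since $a_0=1$ exceeds the largest such fixed point $a^\star\coloneqq\max\{0,1-p/c\}$ and $g$ is nondecreasing, the iterates remain $\geqslant a^\star$; being monotone with a fixed point as their limit, $a_n\to a^\star$. Hence $\nu_c'(\{\infty\})=1-p/c>0$ when $c>p$ and $\nu_c'(\{\infty\})=0$ when $c\leqslant p$, which, together with the $p=0$ case, is the assertion.

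I expect the one delicate point to be the description of $L_\infty$: one must be sure that under $E[1/X]<\infty$ the only finite value $L_\infty(t)$ can take is $1$ — that is, apart from the all-blocks mergers coming from a possible atom of $\Lambda$ at $1$, the dust property forbids the coalescent from coming down from infinity. Once that dichotomy is in place, the remaining two steps are routine bookkeeping.
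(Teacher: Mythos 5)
Your proof is correct and follows essentially the same route as the paper's: monotonicity of $G_c$ gives the decreasing sequence of iterates and the fixed-point property, and the scalar recursion $a_{n+1}=\tfrac{c}{c+p}(2a_n-a_n^2)$ for $a_n=G_c^n(\delta_\infty)(\{\infty\})$ is exactly what the paper writes down before solving for the limit. The only place where you do something the paper leaves tacit is the justification that $L_\infty(t)\in\{1,\infty\}$ for all $t$: the paper simply asserts that the all-blocks merger rate $P(X=1)$ gives $P(L_\infty(Y)=\infty)=c/(P(X=1)+c)$, while you make the underlying dichotomy explicit by splitting off the atom at $1$ and invoking the dust criterion for $\Lambda|_{[0,1)}$ (which is legitimate since $\int_{[0,1)}x^{-1}\Lambda(dx)=E[1/X]-P(X=1)<\infty$). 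That extra care is worth having, as this is the one place the hypothesis $E[1/X]<\infty$ actually enters, but it does not change the structure of the argument.
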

\begin{proof}
Since $G_c$ is monotone, the sequence of iterates $\{G_c^n(\delta_\infty)\}_{n=1}^\infty$ is decreasing and there exists a distribution $\nu_c'$ on $\mathbb{N}^+\cup\{\infty\}$ such that 
$$	\lim_{n\to\infty}G_c^n(\delta_\infty)\to_d\nu_c',~\text{as }n\to\infty.$$
By a similar argument to the one in (\ref{lemma18proof111}) in Lemma \ref{lemma18nup1}, the distribution $\nu_c'$ is a solution to (\ref{theoremequ1}).

Suppose that $P(X=1)=0$. Let $(\Pi_t,t\geqslant 0)$ be a $\Lambda$-coalescent characterized by the probability measure of $X$. Let $L_\infty(t)$ denote the number of blocks in $\Pi_t$. Note that $P(L_\infty(t)=\infty)=1$ for all $t>0$ because the $\Lambda$-coalescent has dust, and therefore the number of blocks remains infinity at all times if starting from infinitely many blocks at time 0.
It then follows that $P(L_\infty(Y)=\infty)=1$. Therefore $G_c(\delta_\infty)=\delta_\infty$, which implies that $\nu_c'=\delta_\infty$.

Suppose that $P(X=1)>0$. Then the rate for infinitely many blocks all to merge into one is $P(X=1)$, which implies that $P(L_\infty(Y)=\infty)=c/(P(X=1)+c)$.
Since $\nu_c'$ is a solution to (\ref{theoremequ1}), we have 
\begin{equation}\label{lemmabefore5}
\nu_c'(\{\infty\})=G_c(\nu_c'(\{\infty\}))=(\nu_c'\ast\nu_c')(\{\infty\})P(L_\infty(Y)=\infty)=\frac{c\left(1-(1-\nu_c'(\{\infty\}))^2\right)}{P(X=1)+c}.
\end{equation}
If $c\leqslant P(X=1)$, then this equation can only hold when $\nu_c'(\{\infty\})=0.$

Suppose that $c>P(X=1)$. Note that $\{\left(G_c^n(\delta_\infty)\right)(\{\infty\})\}_{n=1}^\infty$ is a decreasing sequence of numbers with
\begin{align*} 
\left(G_c^{n+1}(\nu_c')\right)(\{\infty\})&=\left(2\left(G_c^{n}(\nu_c')\right)(\{\infty\})-\left(\left(G_c^{n}(\nu_c')\right)(\{\infty\})\right)^2\right)\frac{c}{P(X=1)+c}\\
&=\frac{2c}{P(X=1)+c}\left(G_c^{n}(\nu_c')\right)(\{\infty\})-\frac{c}{P(X=1)+c}\left(\left(G_c^{n}(\nu_c')\right)(\{\infty\})\right)^2.
\end{align*}
Since $2c/(P(X=1)+c)>1$, the sequence $\{\left(G_c^n(\delta_\infty)\right)(\{\infty\})\}_{n=1}^\infty$ has a strictly positive limit. By solving (\ref{lemmabefore5}), we get $\nu_c'(\{\infty\})=1-P(X=1)/c$.
\end{proof}

In the proof of Theorem \ref{Theoexistinflambda}, we are going to use the same
notations as in \cite{YuleKingmanToni}. Consider a Yule tree. Denote the first time when the number of branches reaches $m$ as $u=0$. Let the number of branches at time $u$ be $R(u)$. Then $R(0)=m$ and $R(0^-)=m-1$. If we consider the the portion of the tree after time $u=0$, then we have $m$ subtrees. The following notations for the subtree, branching times and number of lineages are the same as in \cite{blancas2019}. We can place the $m$ subtrees in random order and denote them by $\mathcal{T}^{l,m}$, $l\in\{1,\ldots,m\}$. Let $V^{l,m}$ denote the time when the initial branch splits into two. Let $V^{l,m}_{1}$ and $V^{l,m}_{2}$ denote the times when the two branches created at time $V^{l,m}$ split into two respectively. Given $V^{l,m}_{i_{1}\ldots i_{d}}$, define $V^{l,m}_{i_{1}\ldots i_{d}1}$ and $V^{l,m}_{i_{1}\ldots i_{d}2}$ to be the times when the two branches created at time $V^{l,m}_{i_{1}\ldots i_{d}}$ split again. Let $U^{s+1}_{m}=\inf\{u:R(u)=s+1\}$. Lemma 11 in \cite{YuleKingmanToni} says that for a fixed positive integer $d$, we have
$$\lim_{s\to\infty}P\left(U_{m}^{s+1}\geqslant\max\left\{V_{i_{1}\ldots i_{d}}^{l,m}, i_{1},\ldots,i_{d}\in\{1,2\}, 1\leqslant l\leqslant m\right\}\right)=1.$$
Formula (3.3) and (3.5) in \cite{YuleKingmanToni} respectively state that
\begin{equation}\label{usedintheo5222}
P\left(U^{s+1}_{m}<u\right)=e^{-cmu}\sum_{i=s+1}^{\infty}\binom{i-1}{i-m} (1-e^{-cu})^{i-m},
\end{equation}
and
\begin{equation}\label{Taums} 
\tau_{m}^{s}=_d U^{s+1}_{m}.
\end{equation}

\begin{center}
\setlength{\unitlength}{0.75cm}
\begin{picture}(22,8.5)
\linethickness{0.3mm}
\put(0,1){\line(2,1){11}}
\put(11,6.5){\line(2,-1){11}}
\put(11,6.5){\line(0,1){1.5}}
\put(6,4){\line(2,-1){6}}
\put(17.2,3.4){\line(-2,-1){4.8}}
\put(1,1.5){\line(2,-1){1}}
\put(8,3){\line(-2,-1){4}}
\put(20,2){\line(-2,-1){2}}
\put(11,6.5){\circle*{0.2}}
\put(6,4){\circle*{0.2}}
\put(17.2,3.4){\circle*{0.2}}
\put(1,1.5){\circle*{0.2}}
\put(8,3){\circle*{0.2}}
\put(20,2){\circle*{0.2}}
\put(11,8){\circle*{0.2}}
\put(11.2,8){$u=0$}
\put(11.2,6.5){$V^{l,m}$}
\put(4.5,4){$V_1^{l,m}$}
\put(17.5,3.4){$V_2^{l,m}$}
\put(0,1.5){$V_{11}^{l,m}$}
\put(8.5,3){$V_{12}^{l,m}$}
\put(20.3,2){$V_{22}^{l,m}$}
\end{picture}

\vspace{-.2in}
Figure 3: The tree ${\cal T}^{l,m}$.
\end{center}

\begin{proof}[Proof of Theorem \ref{Theoexistinflambda}]
Recall from the beginning of the proof of Lemma \ref{lemma18nup1} that $\{G_c^n(\delta_1)\}_{n=1}^\infty$ is an increasing sequence of distributions. Then by Lemma \ref{lemma18nup1} and Lemma \ref{lemmabefore55}, for every $x>0$, for any $\epsilon>0,$ there exists $\bar{d}\in\mathbb{N}^+$ such that
\begin{equation*}\label{T11theo5new}
-\frac{\epsilon}{2}< \left(G^{d+1}_c(\delta_\infty)\right)([0,x])-\nu_c'([0,x])<0<\left(G^{d+1}_c(\delta_1)\right)([0,x])-\nu_c^*([0,x])<\frac{\epsilon}{2}, ~\forall d\geqslant\bar{d}. 
\end{equation*}
By Lemma 11 in \cite{YuleKingmanToni} and $(\ref{Taums})$, for any $\bar{d}\in\mathbb{N}^+$, there exists $\bar{s}\in\mathbb{N}^+$ such that 
\begin{equation*}\label{T12theo5}
P\left(\tau_{m}^{s}\geqslant\max\left\{V_{i_{1}\ldots i_{\bar{d}}}^{l,m}, i_{1},\ldots,i_{\bar{d}}\in\{1,2\}, 1\leqslant l\leqslant m\right\}\right)>1-\frac{\epsilon}{2},~ \forall s\geqslant \bar{s}.
\end{equation*}
Let 
$$A\coloneqq\left\{\tau_{m}^{s}\geqslant\max\left\{V_{i_{1}\ldots i_{\bar{d}}}^{l,m}, i_{1},\ldots,i_{\bar{d}}\in\{1,2\}, 1\leqslant l\leqslant m\right\}\right\}.$$ 
Then $P\left(A^C\right)<\epsilon/2$.
Since the number of individual lineages belonging to each species at any time is at least 1, we can lower bound the number of individual lineages in each species immediately below any branchpoint at time $V_{i_{1}\ldots i_{\bar{d}}}^{l,m}$, where $i_{1},\ldots,i_{\bar{d}}\in\{1,2\}$ and $1\leqslant l\leqslant m$, by 1. 
Note that on event $A$, all times $V_{i_{1}\ldots i_{\bar{d}}}^{l,m},i_{1},\ldots,i_{\bar{d}}\in\{1,2\},1\leqslant l\leqslant m$ are less than $\tau_{m}^{s}$. Therefore, there exist independent random variables $X_l^{\bar{d}+1}$, $l\in\{1,\ldots,m\}$ with distribution $G^{\bar{d}+1}_c(\delta_1)$ such that $X_l^{\bar{d}+1}\leqslant N_{k_l}\left(\left(\tau_{m}^{s}\right)^-\right)$ for all $l\in\{1,\ldots,m\}$ on event $A$. It follows that for all $x>0$ and $l\in\{1,\ldots,m\}$, we have
$$P\left(\left\{N_{k_l}\left(\left(\tau_{m}^{s}\right)^-\right)\leqslant x\right\}\cap A\right)\leqslant P\left(\left\{X_l^{\bar{d}+1}\leqslant x\right\}\cap A\right)\leqslant P\left(X_l^{\bar{d}+1}\leqslant x\right)= \left(G^{\bar{d}+1}_c(\delta_1)\right)([0,x]),$$
which implies that for any $x>0$,
\begin{align}\label{prooftheo5eq101}
 P\left(N_{k_l}\left(\left(\tau_{m}^{s}\right)^-\right)\leqslant x\right)-\nu_c^*([0,x])&\leqslant  P\left(\left\{N_{k_l}\left(\left(\tau_{m}^{s}\right)^-\right)\leqslant x\right\}\cap A\right)+P\left(A^C\right)-\nu_c^*([0,x])\nonumber\\
&\leqslant \left(G^{\bar{d}+1}_c(\delta_1)\right)([0,x])+P\left(A^C\right)-\nu_c^*([0,x])\nonumber\\
&<\epsilon, ~ \forall s\geqslant\bar{s}, ~\forall l\in\{1,\ldots,m\}.
\end{align}
It follows from (\ref{usedintheo5222}) that 
for any $s$, there exists $\bar{u}>0$ such that 
$$P\left(U^{s+1}_{m}\geqslant \bar{u}\right)<\frac{\epsilon}{2}.$$
Since $V_{i_{1}\ldots i_{d}}^{l,m}$ is a sum of $d+1$ independent exponential random variables with parameter $c$, we have $V_{i_{1}\ldots i_{d}}^{l,m}\sim Gamma(d+1,c)$. For a given time $u$, we have
$$P\left(V_{i_{1}\ldots i_{d}}^{l,m}> u\right)=e^{-cu}\sum_{i=0}^{d}\frac{(cu)^{i}}{i!}.$$
Then for fixed $u$, we have
\begin{align*}
P\left(\min\left\{V_{i_{1}\ldots i_{d}}^{l,m}, i_{1},\ldots,i_{d}\in\{1,2\}, 1\leqslant l\leqslant m\right\}\geqslant u\right)\to 1,~\text{as}~d\to\infty.
\end{align*}
Then for any $\epsilon>0$, there exists $\tilde{d}>0$ such that 
\begin{equation*}\label{barusection31theo5}
P\left(\min\left\{V_{i_{1}\ldots i_{d}}^{l,m}, i_{1},\ldots,i_{d}\in\{1,2\},1\leqslant l\leqslant m\right\}< \bar{u}\right)<\frac{\epsilon}{2},~ \forall d\geqslant \tilde{d}. 
\end{equation*}
Therefore,
\begin{align}\label{prooftheo5eq3}
	&P\left(U_m^{s+1}<\min\left\{V_{i_{1}\ldots i_{d}}^{l,m}, i_{1},\ldots,i_{d}\in\{1,2\},1\leqslant l\leqslant m\right\}\right)\nonumber\\
	&\quad\quad\quad\quad\geqslant 1-P\left(\min\left\{V_{i_{1}\ldots i_{d}}^{l,m}, i_{1},\ldots,i_{d}\in\{1,2\},1\leqslant l\leqslant m\right\}< \bar{u}\right)-P\left(U^{s+1}_{m}\geqslant \bar{u}\right)\nonumber\\
	&\quad\quad\quad\quad>1-\epsilon,~ \forall d\geqslant \tilde{d}. 
\end{align}
Let 
$$B\coloneqq\left\{\tau_{m}^{s}<\min\left\{V_{i_{1}\ldots i_{\tilde{d}}}^{l,m}, i_{1},\ldots,i_{\tilde{d}}\in\{1,2\}, 1\leqslant l\leqslant m\right\}\right\}.$$ 
By $(\ref{Taums})$ and (\ref{prooftheo5eq3}), we have $P(B)>1-\epsilon$. Since the number of individual lineages belonging to each species at any time is at least 1 and $N_k(0)=1$ for all $k\in\{1,\ldots,s\}$, 
there exist independent random variables $X_l^{\tilde{d}+1}$, $l\in\{1,\ldots,m\}$ with distribution $G^{\tilde{d}+1}_c(\delta_1)$ such that $X_l^{\tilde{d}+1}\geqslant N_{k_l}\left(\left(\tau_{m}^{s}\right)^-\right)$ for all $l\in\{1,\ldots,m\}$ on event $B$. It follows that for any $x>0$ and $l\in\{1,\ldots,m\}$, we have
\begin{align*}
	P\left(\left\{N_{k_l}\left(\left(\tau_{m}^{s}\right)^-\right)\leqslant x\right\}\cap B\right)&\geqslant P\left(\left\{X_l^{\tilde{d}+1}\leqslant x\right\}\cap B\right)\\
	&\geqslant P\left(X_l^{\tilde{d}+1}\leqslant x\right)-P\left(B^C\right)\\
	&= \left(G^{\tilde{d}+1}_c(\delta_1)\right)([0,x])-P\left(B^C\right).
\end{align*}
Then
\begin{align}\label{prooftheo5eq202}
 P\left(N_{k_l}\left(\left(\tau_{m}^{s}\right)^-\right)\leqslant x\right)-\nu_c^*([0,x])& \geqslant  P\left(\left\{N_{k_l}\left(\left(\tau_{m}^{s}\right)^-\right)\leqslant x\right\}\cap B\right)-\nu_c^*([0,x])\nonumber\\
&\geqslant  \left(G^{\tilde{d}+1}_c(\delta_1)\right)([0,x])-P\left(B^C\right)-\nu_c^*([0,x])\nonumber\\
& >-\epsilon, ~ \forall s\geqslant\bar{s}, ~\forall l\in\{1,\ldots,m\}.
\end{align}
It follows from (\ref{prooftheo5eq101}) and (\ref{prooftheo5eq202}) that $N_{k_l}\left(\left(\tau_{m}^{s}\right)^-\right)\to_d\nu_c^*$ as $s\to\infty$ for all $l\in\{1,\ldots,m\}$ if $N_k(0)=1$ for all $k\in\{1,\ldots,s\}$. 

For the case $N_k(0)=1$ for all $k\in\{1,\ldots,s\}$ in Theorem \ref{Theoexistinflambda}, it remains to show the asymptotic independence. Let $\{a_1,\ldots,a_m\}$ be an arbitrary point of $(\mathbb{N}^+)^m$. 
Then
\begin{align*}
	P\left(N_{k_{l}}\left(\left(\tau_{m}^{s}\right)^-\right)\leqslant a_l,\forall l\in\{1,\ldots,m\}\right)&\leqslant P\left(\left\{N_{k_{l}}\left(\left(\tau_{m}^{s}\right)^-\right)\leqslant a_l,\forall l\in\{1,\ldots,m\}\right\}\cap A\right)+P\left(A^C\right)\nonumber\\
	&\leqslant P\left(\left\{X^{\bar{d}+1}_l\leqslant a_l,\forall l\in\{1,\ldots,m\}\right\}\cap A\right)+P\left(A^C\right)\nonumber\\
	&\leqslant P\left(X^{\bar{d}+1}_l\leqslant a_l,\forall l\in\{1,\ldots,m\}\right)+P\left(A^C\right)\nonumber\\
	&\leqslant \prod_{l=1}^m \left(G^{\bar{d}+1}_c(\delta_1)\right)([0,a_l])+P(A^C)\nonumber\\
	&\leqslant \prod_{l=1}^m\left(\nu_c^*([0,a_l])+\frac{\epsilon}{2}\right)+\frac{\epsilon}{2}, ~ \forall s\geqslant\bar{s},
\end{align*}
and
\begin{align*}
	P\left(N_{k_{l}}\left(\left(\tau_{m}^{s}\right)^-\right)\leqslant a_l,\forall l\in\{1,\ldots,m\}\right)&\geqslant P\left(\left\{N_{k_{l}}\left(\left(\tau_{m}^{s}\right)^-\right)\leqslant a_l,\forall l\in\{1,\ldots,m\}\right\}\cap B\right)\nonumber\\
	&\geqslant P\left(\left\{X_l^{\tilde{d}+1}\leqslant a_l,\forall l\in\{1,\ldots,m\}\right\}\cap B\right)\nonumber\\
	&\geqslant P\left(X_l^{\tilde{d}+1}\leqslant a_l,\forall l\in\{1,\ldots,m\}\right)-P\left(B^C\right)\nonumber\\
	&\geqslant \prod_{l=1}^m \left(G^{\bar{d}+1}_c(\delta_1)\right)([0,a_l])-P(B^C)\nonumber\\
	&\geqslant \prod_{l=1}^m \nu_c^*([0,a_l])-\epsilon, ~ \forall s\geqslant\bar{s}.
\end{align*}
Since $\epsilon$ can be arbitrarily small, we have
$$\lim_{s\to\infty}P\left(N_{k_{l}}\left(\left(\tau_{m}^{s}\right)^-\right)\leqslant a_l,~\forall l\in\{1,\ldots,m\}\right)= \prod_{l=1}^m\nu_c^*([0,a_l]),$$
which implies that $N_{k_l}((\tau_{m}^{s})^{-})$, $l\in\{1,\ldots,m\}$ are asymptotically independent random variables as $s\to\infty$.

If $N_k(0)=\infty$ for all $k\in\{1,\ldots,s\}$, the result in Theorem \ref{Theoexistinflambda} follows from  a similar argument and Lemma \ref{lemmabefore55}.
\end{proof}

\section{Existence of the Solution on $\mathcal{S}$ with Infinite Mean to the RDE (\ref{theoremequ1})}
In this section, we will show the existence of the solution in $\mathcal{S}$ with infinite mean to the RDE (\ref{theoremequ1}). Lemma \ref{lemmasrfunction} to Lemma \ref{uniqueulemma} will show some results used later in the proof of Lemma \ref{lemmaincrease} and Lemma~\ref{lemmadecrease}. Lemma \ref{lemmaincrease} and Lemma \ref{lemmadecrease} will show that under some conditions, there exists a distribution $\mu_0\in\mathcal{S}$ with infinite mean such that $G_c(\mu_0)\succeq\mu_0$ and a distribution $\nu_0\in\mathcal{S}$ with infinite mean such that $G_c(\nu_0)\preceq\nu_0$. Theorem \ref{Theoconvlambda1} will be proved at the end of this section.

\begin{Lemma}\label{lemmasrfunction}
Suppose $S(x)$ and $R(x)$ are two functions that satisfy the following properties:
\begin{enumerate}
	\item $R(0)=0$ and $R(1)=1$.
	\item $S(x)$ and $R(x)$ have Taylor series $S(x)=\sum_{n=1}^\infty s_nx^n$ and $R(x)=\sum_{n=1}^\infty r_nx^n$ centered at $x=0$ for $|x|<1$. There exists $N\in\mathbb{N}^+$ such that $r_n\geqslant 0$ for all $n\geqslant N$.
	\item For all $x\in[0,1)$, functions $S(x)$ and $R(x)$ satisfy the equation
\begin{equation}\label{SRPGF}
	S(x)=\frac{1}{c}E\left[\frac{R(x)-R((1-X)x)}{X^2}\right]-\frac{x}{c}E\left[\frac{R(X+(1-X)x)-R((1-X)x)}{X^2}\right]+R(x),
\end{equation}
where $c$ is a positive constant and $X$ is a random variable on $[0,1]$ with $E[1/X]<\infty$.
\end{enumerate}
Then 
$$s_n=\frac{\lambda_n+c}{c} r_n-\frac{1}{c} \sum_{i=n+1}^{\infty} \binom{i}{i-n+1}\lambda_{i, i-n+1} r_i, ~\forall n\in\mathbb{N}^+. $$
\end{Lemma}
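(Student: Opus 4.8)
The plan is to compare coefficients of $x^n$ on both sides of the functional equation \eqref{SRPGF}. Since $S$ and $R$ are given by convergent power series on $[0,1)$, it suffices to expand each of the three expectation/series expressions appearing in \eqref{SRPGF} as a power series in $x$ with coefficients that are expectations in $X$, then read off the coefficient of $x^n$. The key observation is that this is essentially the ``reverse'' of the computation carried out in the proof of Lemma~\ref{lemmapgflambda}: there, starting from the relation $P(W_1+W_2=n)=\frac{\lambda_n+c}{c}P(W=n)-\frac1c\sum_{i>n}\binom{i}{i-n+1}\lambda_{i,i-n+1}P(W=i)$, one multiplied by $x^n$, summed, and arrived at \eqref{PGF}; here one runs the same algebraic identities with $R$ in place of the generating function of $W$ and $S$ in place of the generating function of $W_1+W_2$.

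First I would rewrite \eqref{SRPGF} in the cleared form
\begin{equation*}
cS(x) = E\left[\frac{R(x)-R((1-X)x)}{X^2}\right] - x\,E\left[\frac{R(X+(1-X)x)-R((1-X)x)}{X^2}\right] + cR(x).
\end{equation*}
Then, using $R(x)=\sum_{n\geqslant 1} r_n x^n$ together with the elementary identity $\sum_{k=2}^{n}\binom{n}{k}t^{k-2}(1-t)^{n-k} = \big(1-(1-t)^n - nt(1-t)^{n-1}\big)/t^2$ (already used in \eqref{equaa5}), I would expand
\begin{equation*}
\frac{R(x)-R((1-X)x)}{X^2} = \sum_{n\geqslant 1} r_n x^n \frac{1-(1-X)^n}{X^2},
\end{equation*}
and similarly expand the term $x\,E[(R(X+(1-X)x)-R((1-X)x))/X^2]$ as in \eqref{equaa6}, obtaining after reindexing a series in $x^n$ whose $n$-th coefficient is $\sum_{i\geqslant n+1} r_i\binom{i}{i-n+1}E[X^{i-n-1}(1-X)^{n-1}]$. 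Taking expectations in $X$ and invoking the definitions \eqref{lambda11}, \eqref{lambda22} of $\lambda_{b,k}$ and $\lambda_b$, the coefficient of $x^n$ on the right-hand side becomes $\lambda_n r_n + c r_n - \sum_{i\geqslant n+1}\binom{i}{i-n+1}\lambda_{i,i-n+1}r_i$, while on the left it is $c s_n$. Dividing by $c$ gives the claimed formula.

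The one genuine technical point — the main obstacle — is justifying the interchange of $E[\,\cdot\,]$ (expectation over $X$) with the infinite sum over $n$ (equivalently, the interchange used to pass from power series to coefficientwise identities). For the ``$1-(1-X)^n$'' pieces everything is nonnegative, so Tonelli/Monotone Convergence applies exactly as in \eqref{equaa5}–\eqref{equaa6}. The subtlety is that $r_n$ need not be nonnegative for $n<N$; however, only finitely many terms $r_1,\dots,r_{N-1}$ are affected, each contributing a single polynomial term $r_n x^n (1-(1-X)^n)/X^2$ with $(1-(1-X)^n)/X^2 \leqslant n/X$ integrable (since $E[1/X]<\infty$), so these finitely many terms cause no convergence issue and can be handled separately. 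Hence the interchange is legitimate for $|x|<1$, and matching coefficients of $x^n$ is valid for every $n\in\mathbb{N}^+$. I would also note that the finiteness of $E[1/X]$ guarantees $\lambda_n<\infty$ for all $n$ and that the series $\sum_{i\geqslant n+1}\binom{i}{i-n+1}\lambda_{i,i-n+1}|r_i|$ converges, so the stated identity is between finite quantities.
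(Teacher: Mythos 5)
Your proposal is correct and follows essentially the same route as the paper: expand the three pieces of \eqref{SRPGF} into power series, interchange expectation with the sums (nonnegative tail by Tonelli, finitely many exceptional terms $n<N$ handled separately), and match coefficients of $x^n$. The only place where the paper is more careful than your sketch is the assertion that $\sum_{i\geqslant n+1}\binom{i}{i-n+1}\lambda_{i,i-n+1}|r_i|<\infty$: this does not follow from $E[1/X]<\infty$ alone, and the paper justifies it via the auxiliary function $\bar R(x)=\sum_{n\geqslant 1}|r_n|x^n$, which converges on $[0,1]$ because $R(1)=1$ together with $r_n\geqslant 0$ for $n\geqslant N$ forces $\sum_n|r_n|<\infty$.
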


\begin{proof}
We first show that the two expectations on the right-hand side of $(\ref{SRPGF})$ are finite for all $x\in[0,1)$. 
Let 
$$\bar{R}(x)\coloneqq \sum_{n=1}^\infty |r_n|x^n.$$
Since $r_n\geqslant 0$ for all $n\geqslant N$ and $\sum_{n=1}^\infty r_n=R(1)=1$, the series $\sum_{n=1}^\infty |r_n|$ is convergent.
It follows that 
$$ | R(x)|\leqslant\bar{R}(x)\leqslant \sum_{n=1}^\infty |r_n| ,~\forall x\in[0,1],$$
and
$$\sup_{a\in[0,x]}|R'(a)|=\sup_{a\in[0,x]}\left|\sum_{n=1}^\infty nr_n a^{n-1}\right|\leqslant \sum_{n=1}^\infty n|r_n| x^{n-1}  =\bar{R}'(x)
<\infty,~\forall x\in[0,1).$$
Since $X$ is a random variable on $[0,1]$, we have
$$0\leqslant(1-X)x\leqslant x\leqslant X+(1-X)x\leqslant1,~\forall x\in[0,1].$$
Then
	\begin{equation*}
			\left|E\left[\frac{R(x)-R((1-X)x)}{X^2}\right]\right|\leqslant x\sup_{a\in[0,x]}\bar{R}'(a)E\left[\frac{1}{X}\right]<\infty, ~\forall x\in[0,1),
	\end{equation*}
	and
	\begin{align*}
			\left| E\left[\frac{R(X+(1-X)x)-R((1-X)x)}{X^2}\right]\right|&\leqslant\left|E\left[\frac{R(X+(1-X)x)-R((1-X)x)}{X^2}\bold{1}_{\{X>1/2\}}\right]\right|\\
			&\quad\quad+\left|E\left[\frac{R(X+(1-X)x)-R((1-X)x)}{X^2}\bold{1}_{\{X\leqslant 1/2\}}\right]\right|\\
			&\leqslant \frac{ 2\sum_{n=1}^\infty |r_n|}{(1/2)^2}+\sup_{a\in[0,1/2+x/2]}\bar{R}'\left(a\right)E\left[\frac{1}{X}\right]\\
			&<\infty, ~\forall x\in[0,1).
	\end{align*}

We now prove the result of the lemma. 
Since $R(x)=\sum_{n=1}^\infty r_nx^n$ is finite on $[0,1]$, we have
$$R(x)-R((1-X)x)=\sum_{n=1}^\infty r_nx^n-\sum_{n=1}^\infty r_n(1-X)^nx^n=\sum_{n=1}^\infty r_nx^n(1-(1-X)^n),~\forall x\in[0,1],$$ 
and 
\begin{align*}
	R(X+(1-X)x)-R((1-X)x)&=\sum_{n=1}^\infty r_n(X+(1-X)x)^n-\sum_{n=1}^\infty r_n((1-X)x)^n\\
	&=\sum_{n=1}^\infty r_n\left((X+(1-X)x)^n-((1-X)x)^n\right),~\forall x\in[0,1].
\end{align*}
By the Monotone Convergence Theorem, we have
\begin{align}\label{srdiff1}
	E\left[\frac{R(x)-R((1-X)x)}{X^2}\right]&=E\left[\sum_{n=1}^\infty r_nx^n \frac{1-(1-X)^n}{X^2}\right]\nonumber\\
	&=E\left[\sum_{n=1}^\infty r_nx^n \sum_{k=1}^n \binom{n}{k} X^{k-2}(1-X)^{n-k}\right]\nonumber\\
	&=\sum_{n=1}^\infty\left( \sum_{k=1}^n \binom{n}{k} E\left[X^{k-2}(1-X)^{n-k}\right]\right)r_nx^n,
\end{align}
and
\begin{align}\label{srdiff2}
	xE\left[\frac{R(X+(1-X)x)-R(1-X)x)}{X^2}\right]&=xE\left[\sum_{i=1}^\infty r_i \frac{(X+(1-X)x)^i-((1-X)x)^i}{X^2}\right]\nonumber\\
	&=xE\left[\sum_{i=1}^\infty r_i \sum_{n=0}^{i-1} \binom{i}{i-n} X^{i-n-2}(1-X)^{n}x^n\right]\nonumber\\
	&=x\sum_{n=0}^\infty  \left(\sum_{i=n+1}^{\infty} \binom{i}{i-n} E\left[X^{i-n-2}(1-X)^{n}\right]r_i\right)x^n\nonumber\\
	&=\sum_{n=1}^\infty  \left(\sum_{i=n}^{\infty} \binom{i}{i-n+1} E\left[X^{i-n-1}(1-X)^{n-1}\right]r_i\right)x^n.
\end{align}
Note that the coefficients of $x^n$ for the Taylor series on both sides of $(\ref{SRPGF})$ must be the same. Therefore, from $(\ref{SRPGF})$, 
$(\ref{srdiff1})$ and $(\ref{srdiff2})$, we have
\begin{align*}
	s_n&=\frac{1}{c}\sum_{k=1}^n \binom{n}{k} E\left[X^{k-2}(1-X)^{n-k}\right]r_n-\frac{1}{c}\sum_{i=n}^{\infty} \binom{i}{i-n+1} E\left[X^{i-n-1}(1-X)^{n-1}\right]r_i+r_n\nonumber\\
	&=
	\begin{cases}
      r_1-\frac{1}{c}\sum\limits_{i=2}^{\infty}  E\left[X^{i-2}\right]r_i,&n=1,\\
      \left(\frac{1}{c}\sum\limits_{k=2}^{n}\binom{n}{k}E\left[X^{k-2}(1-X)^{n-k}\right]+1\right)r_n\\
      \quad\quad\quad\quad-\frac{1}{c}\sum\limits_{i=n+1}^\infty \binom{i}{i-n+1}E\left[X^{i-n-1}(1-X)^{n-1}\right]r_i,&n\geqslant 2.
    \end{cases} 
\end{align*}
The result of the lemma follows from $(\ref{lambda11})$, $(\ref{lambda22})$ and $(\ref{lambda33})$.
\end{proof}

 Note that for $\alpha>0$, the Taylor series for the function $1-(1-x)^{\alpha}$ centered at $x=0$, where $|x|\leqslant1$, is 
$$1-(1-x)^{\alpha}=\sum_{n=1}^\infty \binom{\alpha}{n} (-1)^{n+1}x^n,$$
where the generalized binomial coefficients are defined by
$$\binom{\alpha}{n}\coloneqq\frac{\alpha(\alpha-1)\cdots(\alpha-n+1)}{n!}.$$
For $\alpha>0$ and $n\geqslant 1$, let
\begin{equation}\label{def11xalpha122}
\gamma_{\alpha,n}\coloneqq(-1)^{n+1}\binom{\alpha}{n}.
\end{equation}
Then 
\begin{equation}\label{def11xalpha1}
	1-(1-x)^{\alpha}=\sum_{n=1}^\infty \gamma_{\alpha,n}x^n.
\end{equation}
Note that a distribution with probability mass function $p_k=\gamma_{\alpha,n}$ for all $k\in\mathbb{N}^+$ with $\alpha\in(0,1)$ is called a Sibuya distribution, which first appeared in \cite{sibuya1979generalized}. It is well-known (see, for example, Proposition 4 in \cite{kozubowski2018generalized}) that
for any $\alpha>0$ and $\alpha\notin\mathbb{Z}$, we have
\begin{equation}\label{gammasimre}
\lim_{n\to\infty}\frac{\gamma_{\alpha,n}}{n^{-1-\alpha}}=-\frac{1}{\Gamma(-\alpha)}.	
	\end{equation}

We will use this notation in the proofs of Lemmas \ref{sumsn1lemma} -
\ref{lemmadecrease} and Theorem \ref{Theoconvlambda1}.

\begin{Lemma}\label{sumsn1lemma}
	Suppose $E[1/X]<\infty$. Suppose $\{r_n\}_{n=1}^\infty$ is a sequence of nonnegative real numbers satisfying
\begin{align}\label{LTYprob0'rn}
\sum_{n=1}^\infty r_n=1.
\end{align}
Suppose $\{s_n\}_{n=1}^\infty$ is a sequence of real numbers satisfying
\begin{align}\label{LTYprob1'rn}
s_n=\frac{\lambda_n+c}{c} r_n-\frac{1}{c} \sum_{i=n+1}^{\infty} \binom{i}{i-n+1}\lambda_{i, i-n+1} r_i , ~\forall n\in\mathbb{N}^+.
\end{align}
Let $R(x)\coloneqq \sum_{n=1}^\infty r_nx^n$. If $(1-x)R'(x)$ is bounded on $[0,1)$ and $s_n$ is positive for all large enough $n$, then  
$$\sum_{n=1}^\infty s_n=1.$$
\end{Lemma}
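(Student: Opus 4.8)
The plan is to pass to generating functions: identify $\sum_{n=1}^{\infty}s_n$ with $\lim_{x\to 1^-}S(x)$, where $S(x):=\sum_{n=1}^{\infty}s_nx^n$, and then compute that limit from a closed form. The starting observation is that, running the computation in the proof of Lemma~\ref{lemmasrfunction} (equations~(\ref{srdiff1}) and (\ref{srdiff2})) for our sequence $\{r_n\}$ — which is legitimate since $R(x)=\sum_nr_nx^n$ is a power series with nonnegative coefficients and $R(1)=1$, so the two expectations on the right of (\ref{SRPGF}) are finite for $x\in[0,1)$ — shows that
$$f(x):=\frac{1}{c}E\left[\frac{R(x)-R((1-X)x)}{X^2}\right]-\frac{x}{c}E\left[\frac{R(X+(1-X)x)-R((1-X)x)}{X^2}\right]+R(x)$$
is, on $[0,1)$, a convergent power series whose $n$-th coefficient equals $\frac{\lambda_n+c}{c}r_n-\frac1c\sum_{i=n+1}^{\infty}\binom{i}{i-n+1}\lambda_{i,i-n+1}r_i=s_n$. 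Hence $S(x)$ converges on $[0,1)$ and $S(x)=f(x)$ there; combining the (finite) expectations, $f(x)=\tfrac1cE[g(x,X)/X^2]+R(x)$ with $g(x,X):=R(x)-R((1-X)x)-xR(X+(1-X)x)+xR((1-X)x)$.

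Next I would treat the easy side. Since $s_n>0$ for all $n$ past some $N_0$, split $S(x)=\sum_{n<N_0}s_nx^n+\sum_{n\geq N_0}s_nx^n$: the first piece is a polynomial tending to $\sum_{n<N_0}s_n$, and the second is a series of nonnegative terms, so by Abel's theorem (equivalently monotone convergence) it tends to $\sum_{n\geq N_0}s_n\in[0,+\infty]$. Thus $\lim_{x\to1^-}S(x)=\sum_{n=1}^{\infty}s_n$ in $(-\infty,+\infty]$, and it remains to show the limit is $1$. Now $R(x)\to R(1)=1$, and pointwise in $X$, $g(x,X)\to g(1,X)=R(1)-R(1)=0$ as $x\to1^-$, so the whole matter reduces to showing $\lim_{x\to1^-}E[g(x,X)/X^2]=0$.

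The hard part will be this last limit. One cannot split the two expectations and pass to the limit separately: $R'(1)$ may be infinite (this lemma is used precisely for infinite-mean solutions), so $E[(1-R(1-X))/X^2]$ may diverge, and the cancellation inside $g(x,X)$ must be kept. This is where the hypothesis enters. Let $M:=\sup_{x\in[0,1)}(1-x)R'(x)<\infty$, so $R'(t)\leq M/(1-t)$ on $[0,1)$. Write $g(x,X)=-D(x,X)+(1-x)B(x,X)$ with $D:=R(X+(1-X)x)-R(x)\geq0$ and $B:=R(X+(1-X)x)-R((1-X)x)\geq0$ (one checks $A+D=B$ where $A=R(x)-R((1-X)x)$, so that $g=A-xB=-D+(1-x)B$). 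Every point of the intervals $[x,X+(1-X)x]$ and $[(1-X)x,X+(1-X)x]$ lies at distance at least $(1-X)(1-x)$ from $1$, so integrating $R'(t)\leq M/(1-t)$ over these intervals (of lengths $X(1-x)$ and $X$ respectively) gives $D\leq MX/(1-X)$ and $(1-x)B\leq MX/(1-X)$ for $X<1$; together with the trivial bounds $D\leq1$ and $(1-x)B\leq1$, this shows $|g(x,X)|/X^2$ is bounded, uniformly in $x\in[0,1)$, by $\tfrac{4M}{X}\mathbf{1}_{\{X\leq1/2\}}+8\cdot\mathbf{1}_{\{X>1/2\}}$, which is $\Lambda$-integrable since $E[1/X]<\infty$. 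Dominated convergence then gives $E[g(x,X)/X^2]\to0$, whence $\lim_{x\to1^-}S(x)=\lim_{x\to1^-}f(x)=1$, and therefore $\sum_{n=1}^{\infty}s_n=1$. The one step to verify carefully en route — that $S(x)$ converges on $[0,1)$ and coincides with $f(x)$ — comes essentially for free from the computation already carried out in Lemma~\ref{lemmasrfunction}.
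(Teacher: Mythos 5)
Your proposal is correct and follows essentially the same route as the paper: both pass to the generating-function identity $S(x)=\frac1c E[g(x,X)/X^2]+R(x)$ from Lemma~\ref{lemmasrfunction}, use monotone convergence on the eventually-positive tail of $\{s_n\}$ to identify $\sum_n s_n$ with $\lim_{x\to1^-}S(x)$, and then show $E[g(x,X)/X^2]\to0$ by dominated convergence with a $\Lambda$-integrable majorant built from the hypothesis that $(1-x)R'(x)$ is bounded, treating $\{X\leq 1/2\}$ and $\{X>1/2\}$ separately. The only cosmetic difference is the decomposition used to construct the dominating function: you write $g=(1-x)B-D$ and bound each nonnegative piece via $R'(t)\leq M/(1-t)$, whereas the paper writes $g=(1-x)A-xD$ and bounds each term using convexity of $R$ together with the same boundedness hypothesis.
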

\begin{proof}
	By multiplying both sides of (\ref{LTYprob1'rn}) by $x^n$, summing over $n$, and following a similar calculation to the one in (\ref{equaa5}) and (\ref{equaa6}) in the proof of Lemma \ref{lemmapgflambda} with $r_n$ playing the role of $P(W=n)$, we have 
\begin{equation}\label{eq11lemma26}
\sum_{n=1}^\infty s_nx^n= \frac{1}{c}E\left[\frac{R(x)-R((1-X)x)-x(R(X+(1-X)x)-R((1-X)x))}{X^2}\right]+R(x),~\forall x\in[0,1).
\end{equation}
Note that 
\begin{align*}
R(x)-R((1-X)x)&-x(R(X+(1-X)x)-R((1-X)x))\\
&=(1-x)(R(x)-R((1-X)x))+x(R(x)-R(X+(1-X)x)).
\end{align*}
Since $\{r_n\}_{n=1}^\infty$ is a sequence of nonnegative real numbers, we have that $R(x)$ is a nondecreasing function with nondecreasing first derivative. Since $(1-X)x\leqslant x\leqslant X+(1-X)x$, we have
$$0\leqslant R(x)-R((1-X)x)\leqslant R'(x)Xx,$$
and 
$$ 0\geqslant R(x)-R(X+(1-X)x)\geqslant -R'(X+(1-X)x)X(1-x).$$
Therefore,
\begin{align}\label{eq22lemma26}
&\frac{R(x)-R((1-X)x)-x(R(X+(1-X)x)-R((1-X)x))}{X^2}\nonumber\\
&\quad\quad\quad\quad\geqslant -4\cdot\bold{1}_{\{X\geqslant1/2\}}+\bold{1}_{\{X<1/2\}}\frac{-x(1-x)R'(X+(1-X)x)}{X},
\end{align}
and 
\begin{equation}\label{eq33lemma26}
\frac{R(x)-R((1-X)x)-x(R(X+(1-X)x)-R((1-X)x))}{X^2}\leqslant \frac{x(1-x)R'(x)}{X}.
\end{equation}
Note that
$$\bold{1}_{\{X<1/2\}}(1-x)R'(X+(1-X)x)\leqslant (1-x)R'\left(\frac{1}{2}+\frac{x}{2}\right)=2\left(1-\frac{1}{2}-\frac{x}{2}\right)R'\left(\frac{1}{2}+\frac{x}{2}\right).$$
Since $(1-x)R'(x)$ is bounded on $[0,1)$, the right-hand side of (\ref{eq22lemma26}) is bounded below by an integrable random variable and the right-hand side of (\ref{eq33lemma26}) is bounded above by an integrable random variable. It follows from the Dominated Convergence Theorem that 
\begin{equation}\label{eq44lemma26}
 \lim_{x\to1^-}E\left[\frac{R(x)-R((1-X)x)-x(R(X+(1-X)x)-R((1-X)x))}{X^2}\right]=0.
\end{equation}
Since $\{s_n\}_{n=1}^\infty$ is eventually positive, by (\ref{eq11lemma26}) and (\ref{eq44lemma26}), we have 
$$\sum_{n=1}^\infty s_n=\lim_{x\to1^-}\sum_{n=1}^\infty s_nx^n=R(1)=\sum_{n=1}^\infty r_n.$$
The result follows from (\ref{LTYprob0'rn}).
\end{proof}

\begin{Lemma}\label{uniqueulemma}
	Suppose $E[1/X]<\infty$. Suppose $\{r_n\}_{n=1}^\infty$ is a sequence of nonnegative real numbers such that $\sum_{n=1}^\infty r_n$ is convergent  
and there exists $\alpha\in(0,1)$ and $b>0$ such that 
\begin{align}\label{LTYprob00'}
\frac{r_{n-1}-r_n}{n^{-2-\alpha}}\to b, \text{ as }n\to\infty.
\end{align}
Suppose $T$ is a random variable on $\mathbb{N}^+$ with
\begin{align}\label{LTYprob1'}
P(T=n)=\frac{\lambda_n+c}{c} r_n-\frac{1}{c} \sum_{i=n+1}^{\infty} \binom{i}{i-n+1}\lambda_{i, i-n+1} r_i , ~\forall n\in\mathbb{N}^+.
\end{align}
Let $(\Pi_t,t\geqslant 0)$ be a $\Lambda$-coalescent characterized by the probability measure of $X$, and let $(\Pi_t,t\geqslant 0)$ be its restriction to $[n]$. Let $L_n(t)$ denote the number of blocks in $\Pi^n_t$. Suppose $T$, $Y$, and $\Pi$ are independent.
Then 
$$P(L_T(Y)=n)=r_n,~\forall n\in\mathbb{N}^+.$$
\end{Lemma}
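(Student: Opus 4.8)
The plan is to introduce $q_n \coloneqq P(L_T(Y)=n)$ for $n\in\mathbb{N}^+$ and to show $q_n=r_n$ for every $n$. Since $T$ is $\mathbb{N}^+$-valued and $L_m(Y)\le m$ for each $m$, we have $L_T(Y)<\infty$ almost surely, so $\sum_{n\ge 1}q_n=1$. The first point is that $\{q_n\}$ satisfies the \emph{same} recursion as $\{r_n\}$: applying Lemma~\ref{lemmadistLTY} to the random variable $T$ (independent of $Y$ and $\Pi$, with $Y\sim\mathrm{Exp}(c)$) gives
\begin{equation*}
P(T=n)=\frac{\lambda_n+c}{c}\,q_n-\frac1c\sum_{i=n+1}^{\infty}\binom{i}{i-n+1}\lambda_{i,i-n+1}q_i,\qquad n\in\mathbb{N}^+,
\end{equation*}
which together with the hypothesis~\eqref{LTYprob1'} shows that $\{r_n\}$ and $\{q_n\}$ have the same image under the linear map of Lemma~\ref{lemmasrfunction}. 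Multiplying each of the two recursions by $x^n$ and summing over $n$ — legitimate by the monotone-convergence computation \eqref{srdiff1}--\eqref{srdiff2} since $r_n,q_n\ge 0$, and producing finite expectations for every $x\in[0,1)$ by the estimates in the first part of the proof of Lemma~\ref{lemmasrfunction} together with $E[1/X]<\infty$ — I obtain, with $R(x)=\sum_n r_nx^n$, $Q(x)=\sum_n q_nx^n$ and $F\in\{R,Q\}$,
\begin{equation*}
\sum_{n=1}^{\infty}P(T=n)x^n=\frac1c E\!\left[\frac{F(x)-F((1-X)x)-x\bigl(F(X+(1-X)x)-F((1-X)x)\bigr)}{X^2}\right]+F(x),\qquad x\in[0,1).
\end{equation*}

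Subtracting the $F=Q$ identity from the $F=R$ identity, the common left-hand side $\sum_n P(T=n)x^n$ cancels, and setting $D\coloneqq R-Q=\sum_n d_nx^n$ (a power series with $\sum_n|d_n|<\infty$, hence continuous on $[0,1]$) I am left, after a direct expansion of the numerator, with the homogeneous functional equation
\begin{equation}\label{Deqn}
cD(x)=-\,E\!\left[\frac{(1-x)\bigl(D(x)-D((1-X)x)\bigr)+x\bigl(D(x)-D(X+(1-X)x)\bigr)}{X^2}\right],\qquad x\in[0,1).
\end{equation}
I also record that $\sum_n r_n=1$: letting $x\to 1^-$ in the $F=R$ identity, the left side tends to $\sum_n P(T=n)=1$ and $R(x)\to\sum_n r_n$ by Abel's theorem, while the expectation tends to $0$ by dominated convergence exactly as in \eqref{eq44lemma26}, since \eqref{LTYprob00'} forces $r_n=\sum_{m>n}(r_{m-1}-r_m)\sim\frac{b}{1+\alpha}\,n^{-1-\alpha}$ with $1+\alpha\in(1,2)$, so $(1-x)R'(x)=O\bigl((1-x)^{\alpha}\bigr)$ is bounded on $[0,1)$. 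Hence $R(1)=Q(1)=1$, i.e.\ $D(0)=D(1)=0$.

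The heart of the proof is a maximum principle applied to \eqref{Deqn}. Suppose $D\not\equiv 0$ on $[0,1]$; replacing $D$ by $-D$ if necessary (both solve \eqref{Deqn}, which is linear and homogeneous in $D$), we may assume $M\coloneqq\max_{x\in[0,1]}D(x)>0$, attained at some $x^*$. Because $D(0)=D(1)=0<M$, we have $x^*\in(0,1)$. For every realization of $X\in(0,1]$ (note $E[1/X]<\infty$ forces $X>0$ a.s.), both $(1-X)x^*$ and $X+(1-X)x^*=x^*+X(1-x^*)$ lie in $[0,1]$, so $D((1-X)x^*)\le M=D(x^*)$ and $D(X+(1-X)x^*)\le M=D(x^*)$; since $1-x^*>0$ and $x^*>0$, the numerator inside the expectation in \eqref{Deqn} evaluated at $x^*$ is nonnegative almost surely, so this expectation (finite, being the difference of the finite $F=R$ and $F=Q$ expectations) is $\ge 0$. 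Thus the right-hand side of \eqref{Deqn} at $x^*$ is $\le 0$ while the left-hand side is $cM>0$, a contradiction. Therefore $D\equiv 0$ on $[0,1]$, so $d_n=0$ for all $n$, which is the assertion $P(L_T(Y)=n)=q_n=r_n$.

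The main obstacle is the absence of any forward induction: \eqref{LTYprob1'} determines each $r_n$ only in terms of $r_n,r_{n+1},\dots$, and at the level of sequences the relevant homogeneous operator is not a contraction — the row sums $\sum_{i>n}\binom{i}{i-n+1}\lambda_{i,i-n+1}$ can exceed $\lambda_n+c$, and can even be infinite. Passing to generating functions and evaluating at a point $x^*\in(0,1)$ supplies the missing contraction factors $1-x^*$ and $x^*$, which is exactly what makes the maximum-principle argument close; forcing $x^*$ to lie strictly inside $(0,1)$ — hence the preliminary need for $D(1)=0$, i.e.\ $\sum_n r_n=1$ — is the point at which the decay hypothesis \eqref{LTYprob00'} is used. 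The remaining items (legitimacy of the term-by-term manipulations, finiteness of the expectations for $x\in[0,1)$, and the Abelian identification $\sum_n r_n=1$) are routine and follow the pattern of the proofs of Lemmas~\ref{lemmasrfunction} and~\ref{sumsn1lemma}.
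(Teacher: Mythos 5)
Your proof is correct and takes a genuinely different route from the paper's. The paper's argument is a direct summation: it writes $P(L_T(Y)=k)=\sum_{n\geqslant k}P(T=n)P(L_n(Y)=k)$, substitutes the hypothesized expression for $P(T=n)$ rewritten in terms of the increments $v_j=r_{j-1}-r_j$, and then, after several Tonelli interchanges justified by the rate in \eqref{LTYprob00'} and a comparison of $r_i$ against the Sibuya coefficients $\gamma_{\alpha,i}$, collapses the resulting triple sum by telescoping against the iterative formula for $P(L_i(Y)=k)$; this yields $r_k$ for all $k$ larger than some $N$, and the finitely many remaining indices are handled by descending through the recursion of Lemma~\ref{lemmadistLTY}. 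You dispense with the triple-sum manipulation entirely: you apply Lemma~\ref{lemmadistLTY} at once to see that $q_n=P(L_T(Y)=n)$ and $r_n$ are mapped to the same sequence $\{P(T=n)\}$ by the same linear operator, pass to generating functions (the same computation as in Lemmas~\ref{lemmasrfunction} and~\ref{sumsn1lemma}, legitimate here because both coefficient sequences are nonnegative and summable and $E[1/X]<\infty$ keeps the relevant expectations finite on $[0,1)$), and eliminate $D=R-Q$ by a maximum-principle argument pinned by the boundary values $D(0)=D(1)=0$. This is shorter and more transparent, and it isolates exactly where \eqref{LTYprob00'} enters in your route — only through the tail estimate $r_n=O(n^{-1-\alpha})$, which keeps $(1-x)R'(x)$ bounded, forces $R(1)=1$ via the Abelian limit, and so pushes the extremizer of $D$ into $(0,1)$ where the functional equation applies. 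The maximum-principle step is thematically close to the argmax argument the paper runs for the uniqueness part of Theorem~\ref{theo4unilambda}, but is simpler here because the functional equation satisfied by $D$ is linear and homogeneous, with no quadratic term to track.
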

\begin{proof}
Note that $L_T(Y)\leqslant T$. By the independence of $T$, $Y$, and $\Pi$, we have 
\begin{equation}\label{L4LTYninfsum1}
P(L_T(Y)=n)=\sum_{i=n}^\infty P(T=i)P(L_T(Y)=n|T=i)=\sum_{i=n}^\infty P(T=i)P(L_i(Y)=n),~\forall n\in\mathbb{N}^+.
\end{equation}

We first derive an iterative formula for $P(L_{i}(Y)=k)$. Suppose that we are starting from $i$ blocks. Let $B_{i,n}$ be the event that the first merger happens before $Y$ and makes the number of blocks decrease from $i$ to $n$. Then for $i>k$, we have
\begin{equation*}\label{SplitBij}
	\{L_i(Y)=k\}=\bigcup_{n=k}^{i-1}\left(\{L_{i}(Y)=k\}\cap B_{i,n}\right).
\end{equation*}
Note that the transition rate from $i$ to $n$ is $\binom{i}{i-n+1}\lambda_{i, i-n+1}$, and the rate for the number of blocks to decrease is $\lambda_i$ when there are $i$ blocks. Recall that $Y$ is an exponential random variable with rate $c$. Therefore, 
\begin{equation*}\label{probBij}
	P(B_{i,n})=\frac{\binom{i}{i-n+1}\lambda_{i, i-n+1}}{\lambda_i+c},
\end{equation*}
and
\begin{equation}\label{probLkk1}
	P(L_i(Y)=i)=\frac{c}{\lambda_i+c}.
\end{equation}
Once the number of blocks reaches $n$, because of the memoryless property of exponential random variables, the probability for the number of blocks to reach $k$ at time $Y$ is $P(L_n(Y)=k)$. Then for $i>k$, we have
\begin{equation}\label{LiYn}
	P(L_{i}(Y)=k)=\sum_{n=k}^{i-1}P\left(L_{i}(Y)=k|B_{i,n}\right)P(B_{i,n})=\frac{1}{\lambda_i+c}\sum_{n=k}^{i-1}\binom{i}{i-n+1}\lambda_{i, i-n+1}P(L_{n}(Y)=k).
\end{equation}

We now prove $P(L_T(Y)=n)=r_n$ for all large enough $n$.
For $n\geqslant 2$, let 
\begin{equation}\label{vndef1new}
	v_n\coloneqq r_{n-1}-r_n.
\end{equation}
It follows from the convergence of $\sum_{n=1}^\infty r_n$ that $\lim_{n\to\infty}r_n=0$, and from $(\ref{LTYprob00'})$ that there exists $N\in\mathbb{N}^+$ such that $0<v_n< 2bn^{-2-\alpha}$ for all $n>N$. Therefore,
\begin{equation}\label{unsumvnrnl5}
r_n=\sum_{i=n+1}^\infty v_i,~\forall n\in\mathbb{N}^+,
\end{equation}
and there exists $M_1>0$ such that $r_n<M_1n^{-1-\alpha}$ for all $n\in\mathbb{N}^+$.
By $(\ref{LTYprob1'})$ and $(\ref{unsumvnrnl5})$, we have
\begin{align}\label{L2PTn1rnnew} 
P(T=n)&=r_n+\frac{\lambda_n}{c} \sum_{j=n+1}^\infty v_j-\frac{1}{c} \sum_{i=n+1}^{\infty} \binom{i}{i-n+1}\lambda_{i, i-n+1} \sum_{j=i+1}^\infty v_j\nonumber\\
&=r_n+\frac{\lambda_n}{c} v_{n+1}+ \frac{1}{c}\sum_{j=n+2}^\infty  v_j \lambda_n-\frac{1}{c}\sum_{j=n+2}^\infty  v_j\sum_{i=n+1}^{j-1} \binom{i}{i-n+1}\lambda_{i, i-n+1}.
\end{align}
Note that $\sum_{j=n+2}^\infty  v_j \lambda_n=\lambda_n r_{n+1}<\infty$. Then the first three terms on the right-hand side of (\ref{L2PTn1rnnew}) are finite and the finiteness of the last term follows from $P(T=n)<\infty$. 
Therefore, by $(\ref{L2PTn1rnnew})$, we have 
\begin{align}\label{L2PTn1new} 
P(T=n)=r_n+\frac{1}{c}\lambda_n v_{n+1}+\frac{1}{c}\sum_{j=n+2}^\infty  v_j \left(\lambda_n- \sum_{i=n+1}^{j-1} \binom{i}{i-n+1}\lambda_{i, i-n+1}\right).
\end{align}
Note that the result $P(L_T(Y)=n)=r_n$ follows by combining $r_n=\sum_{i=n}^\infty P(T=i)P(L_i(Y)=n)$ with (\ref{L4LTYninfsum1}). Now we show $r_n=\sum_{i=n}^\infty P(T=i)P(L_i(Y)=n)$. We will do this by showing that the right-hand side can be separated into the sum of several convergent series, and then show the two sides are equal.
Note that 
\begin{equation*}
	\lambda_{n}=\sum_{k=2}^{n}\binom{n}{k}E\left[X^{k-2}(1-X)^{n-k}\right]=E\left[\frac{1-(1-X)^n-nX(1-X)^{n-1}}{X^2}\right]< nE\left[\frac{1}{X}\right].
\end{equation*}
Recall that  $0<v_n< 2bn^{-2-\alpha}$ for all $n>N$. Then
$$0<\sum_{n=k}^\infty  \lambda_n v_{n+1}<\sum_{n=k}^\infty nE\left[\frac{1}{X}\right] 2bn^{-2-\alpha}<\infty,~\forall k>N.$$ 
Therefore,
\begin{equation}\label{convergesumfl1}
	\sum_{n=k}^\infty  \lambda_n v_{n+1}P(L_n(Y)=k)\text{ is convergent},~\forall k\in\mathbb{N}^+.
\end{equation}
Let 
\begin{equation}\label{deffkl4newnew}
f_k\coloneqq \sum_{n=k}^\infty \sum_{j=n+2}^\infty   \sum_{i=n+1}^{j-1}v_j\binom{i-1}{n-1}E\left[X^{i-n-1}(1-X)^{n-1}\right].
\end{equation}
We now prove that $f_k$ is finite for all $k\in\mathbb{N}^+$. Since $v_j>0$ for all $j>N$,
by Tonelli's Theorem, $(\ref{unsumvnrnl5})$ and $(\ref{deffkl4newnew})$, we have
\begin{equation}\label{boundgkl41rnnew}
f_k=\sum_{n=k}^\infty\sum_{i=n+1}^\infty \binom{i-1}{n-1}E\left[X^{i-n-1}(1-X)^{n-1}\right]r_i\geqslant0,~\forall k>N.
\end{equation}
Since $\alpha\in(0,1)$, we have $\gamma_{\alpha,n}>0$ for all $n\in\mathbb{N}^+$.
It then follows from (\ref{gammasimre}) that there exists $M_2>0$ such that $i^{-1-\alpha}<M_2\gamma_{\alpha,i}$ for all $i\in\mathbb{N}^+$. Recall that $r_i<M_1i^{-1-\alpha}$ for all $i\in\mathbb{N}^+$.    
Then 
\begin{equation}\label{unboundgammarnnew}
	r_i<  M_1M_2\gamma_{\alpha,i}<  M_1M_2\gamma_{\alpha,i-1},~\forall i\in\mathbb{N}^+.
\end{equation}
Let $R(x)=1-(1-x)^\alpha$. Recall from $(\ref{def11xalpha1})$ that $1-(1-x)^\alpha=\sum_{n=1}^\infty \gamma_{\alpha,n}x^n$. Then
$$R(X+(1-X)x)
=1-(1-X)^\alpha(1-x)^\alpha=1-(1-X)^\alpha+(1-X)^\alpha \sum_{n=1}^\infty \gamma_{\alpha,n}x^n,$$ 
and 
\begin{align*}
R(X+(1-X)x)&=\sum_{i=1}^\infty \gamma_{\alpha,i}(X+(1-X)x)^i\\
&=\sum_{n=1}^\infty (1-X)^n x^n\sum_{i=n}^\infty\gamma_{\alpha,i}\binom{i}{n}X^{i-n}+\sum_{i=1}^\infty\gamma_{\alpha,i}X^{i}.
\end{align*} 
Since the coefficient of the term $x^{n-1}$ in the Taylor expansion of $R(X+(1-X)x)$ centered at 0 is unique, for all $n\geqslant2$, we have
$$(1-X)^\alpha \gamma_{\alpha,n-1}=(1-X)^{n-1} \sum_{i=n-1}^\infty\gamma_{\alpha,i}\binom{i}{n-1}X^{i-(n-1)}=(1-X)^{n-1} \sum_{i=n}^\infty\gamma_{\alpha,i-1}\binom{i-1}{n-1}X^{i-n}.$$
It follows from the Monotone Convergence Theorem that for $n\geqslant 2$, 
\begin{align}\label{gammanchangel21rnnew}
	E\left[\frac{(1-X)^\alpha}{X}\right]\gamma_{\alpha,n-1}
	=\sum_{i=n}^\infty\binom{i-1}{n-1}E\left[X^{i-n-1}(1-X)^{n-1}\right]\gamma_{\alpha,i-1}.
\end{align}
By $(\ref{unboundgammarnnew})$ and $(\ref{gammanchangel21rnnew})$, we have
\begin{equation}\label{boundgkl42rnnew}
\sum_{i=n+1}^\infty \binom{i-1}{n-1}E\left[X^{i-n-1}(1-X)^{n-1}\right]r_i\leqslant M_1M_2E\left[\frac{(1-X)^\alpha}{X}\right]\gamma_{\alpha,n-1},~\forall n\geqslant 2.
\end{equation}
Since $\sum_{n=1}^\infty\gamma_{\alpha,n}=R(1)=1$, by $(\ref{boundgkl41rnnew})$ and $(\ref{boundgkl42rnnew})$, we have
\begin{equation*}\label{conevergentfkl4new}
	0\leqslant f_k\leqslant M_1M_2	E\left[\frac{(1-X)^\alpha}{X}\right]\sum_{n=k}^\infty \gamma_{\alpha,n-1}<\infty,~\forall k>N.
\end{equation*}
Note that 
for $n>1$,
\begin{align}\label{beforenegabinom}
0\leqslant \sum_{i=n+1}^{j-1}\binom{i-1}{n-1}E\left[X^{i-n-1}(1-X)^{n-1}\right]
\leqslant \frac{j}{n-1}E\left[\sum_{i=n+1}^{\infty}\binom{i-2}{i-n}\frac{X^{i-n}(1-X)^{n-1}}{X}\right].
\end{align}
It is a well-known result that for $p\in[0,1]$,
\begin{equation}\label{negabinom111}
\sum_{k=0}^\infty\binom{k+r-1}{k}(1-p)^kp^r=1.	
\end{equation}
Combining (\ref{beforenegabinom}) and (\ref{negabinom111}), for $n>1$, we have
$$0\leqslant \sum_{i=n+1}^{j-1}\binom{i-1}{n-1}E\left[X^{i-n-1}(1-X)^{n-1}\right]<\frac{j}{n-1}E\left[\frac{1}{X}\right].$$
Recall that $0<v_j< 2bj^{-2-\alpha}$ for all $j>N$. Then
$$ \sum_{j=n+2}^\infty   \sum_{i=n+1}^{j-1}  |v_j|\binom{i-1}{n-1}E\left[X^{i-n-1}(1-X)^{n-1}\right]<\frac{1}{n-1}E\left[\frac{1}{X}\right]\sum_{j=n+2}^\infty j|v_j|<\infty,~\forall n>1,$$
and 
$$\sum_{j=3}^\infty   \sum_{i=2}^{j-1}|v_j|E\left[X^{i-2}\right]\leqslant \sum_{j=3}^\infty j|v_j|<\infty.$$
Since $|v_j|=v_j$ for all $j>N$ and $f_k$ is finite for all $k>N$, we have
\begin{equation}\label{conevergentfkl4new2}
\sum_{n=k}^\infty \sum_{j=n+2}^\infty   \sum_{i=n+1}^{j-1}|v_j|\binom{i-1}{n-1}E\left[X^{i-n-1}(1-X)^{n-1}\right]<\infty,~\forall k\in\mathbb{N}^+.
\end{equation}
Let 
$$f'_k\coloneqq\sum_{n=k}^\infty \sum_{j=n+2}^\infty   \sum_{i=n+1}^{j-1}v_j\binom{i-1}{n-1}E\left[X^{i-n-1}(1-X)^{n-1}\right]P(L_n(Y)=k).$$
By $(\ref{conevergentfkl4new2})$, we have
\begin{equation}\label{convegegk1}
	f'_k\text{ is finite},~\forall k\in\mathbb{N}^+.
\end{equation}
Let
$$g(n,j)\coloneqq v_j \left(\lambda_n- \sum_{i=n+1}^{j-1} \binom{i}{i-n+1}\lambda_{i, i-n+1}+\sum_{i=n+1}^{j-1}\binom{i-1}{i-n}E\left[X^{i-n-1}(1-X)^{n-1}\right]\right).$$
Therefore, by $(\ref{L2PTn1new})$, $(\ref{convergesumfl1})$, $(\ref{convegegk1})$ and the fact that $\sum_{n=1}^\infty r_n$ is convergent, for all $k\in\mathbb{N}^+$, 
we have
\begin{align}\label{splitnewl51}
\sum_{n=k}^\infty P(T=n)P(L_n(Y)=k)&=\sum_{n=k}^\infty r_nP(L_n(Y)=k)+\frac{1}{c}\sum_{n=k}^\infty  \lambda_n v_{n+1}P(L_n(Y)=k)\nonumber\\
&\quad\quad\quad\quad-\frac{f'_k}{c}+\frac{1}{c}\sum_{n=k}^\infty \sum_{j=n+2}^\infty g(n,j)P(L_n(Y)=k).
\end{align}
Note that 
\begin{align*}
	\lambda_{n}
	=E\left[\frac{1-(1-X)^n-nX(1-X)^{n-1}}{X^2}\right]=E\left[\frac{1-(1-X)^{n-1}-(n-1)X(1-X)^{n-1}}{X^2}\right].
\end{align*}
Using (\ref{negabinom111}),  
it follows that
$$\lambda_n=\sum_{i=n+1}^\infty \binom{i-1}{n-2}E\left[X^{i-n-1}(1-X)^{n-1}\right].$$
Recall from $(\ref{lambda11})$ that $\lambda_{i, i-n+1}=E\left[X^{i-n-1}(1-X)^{n-1}\right]$.
Therefore, 
\begin{align*}\label{Tonelliposirnnew}
	\lambda_n&- \sum_{i=n+1}^{j-1}\binom{i}{i-n+1}\lambda_{i, i-n+1}\nonumber\\
	&=\left(\sum_{i=n+1}^\infty \binom{i-1}{n-2}E\left[X^{i-n-1}(1-X)^{n-1}\right]-\sum_{i=n+1}^{j-1} \binom{i}{i-n+1}E\left[X^{i-n-1}(1-X)^{n-1}\right]\right)\nonumber\\
	&\geqslant \left(\sum_{i=n+1}^{j-1} \binom{i-1}{i-n+1}E\left[X^{i-n-1}(1-X)^{n-1}\right]-\sum_{i=n+1}^{j-1} \binom{i}{i-n+1}E\left[X^{i-n-1}(1-X)^{n-1}\right]\right)\nonumber\\
	&= -\sum_{i=n+1}^{j-1}\binom{i-1}{i-n}E\left[X^{i-n-1}(1-X)^{n-1}\right].
\end{align*}
Since $0<v_j$ for all $j>N$, we have $g(n,j)\geqslant 0$ for all $j>N$. By Tonelli's Theorem, for $k>N$, we have
\begin{align*}
	\sum_{n=k}^\infty& \sum_{j=n+2}^\infty g(n,j)P(L_n(Y)=k)=\sum_{j=k+2}^\infty\sum_{n=k}^{j-2}g(n,j)P(L_n(Y)=k)\\
	&=\sum_{j=k+2}^\infty v_j\left( \sum_{n=k}^{j-2}\lambda_n P(L_n(Y)=k)-\sum_{i=k+1}^{j-1}\sum_{n=k}^{i-1} \binom{i}{i-n+1}\lambda_{i, i-n+1}P(L_n(Y)=k)\right.\\
	&\quad\quad\quad\quad\quad\quad\quad\quad+\left.\sum_{n=k}^{j-2}\sum_{i=n+1}^{j-1}\binom{i-1}{i-n}E\left[X^{i-n-1}(1-X)^{n-1}\right]P(L_n(Y)=k)\right),
\end{align*}
and
$$\sum_{j=k+2}^\infty v_j\sum_{n=k}^{j-2}\sum_{i=n+1}^{j-1}\binom{i-1}{i-n}E\left[X^{i-n-1}(1-X)^{n-1}\right]P(L_n(Y)=k)=f'_k.$$
It follows from $(\ref{probLkk1})$ and $(\ref{LiYn})$ that 
\begin{align*}
\sum_{n=k}^{j-2}\lambda_n &P(L_n(Y)=k)-\sum_{i=k+1}^{j-1}\sum_{n=k}^{i-1} \binom{i}{i-n+1}\lambda_{i, i-n+1}P(L_n(Y)=k)\\
&=\sum_{n=k}^{j-2}\lambda_n P(L_n(Y)=k)-\sum_{i=k+1}^{j-1}(\lambda_i+c)P(L_i(Y)=k)\\
&=\frac{c\lambda_k}{\lambda_k+c}-\lambda_{j-1}P(L_{j-1}(Y)=k)-c\sum_{i=k+1}^{j-1}P(L_i(Y)=k).
\end{align*}
Recall from (\ref{convergesumfl1}) and (\ref{convegegk1}) that $\sum_{n=k}^\infty  \lambda_n v_{n+1}P(L_n(Y)=k)$ and $f'_k$ are finite for all $k\in\mathbb{N}^+$. Then for all $k>N$,
\begin{align*}
	\sum_{n=k}^\infty& \sum_{j=n+2}^\infty  g(n,j)P(L_n(Y)=k)\\
	&=\sum_{j=k+2}^\infty v_j\left(\frac{c\lambda_k}{\lambda_k+c}-\lambda_{j-1}P(L_{j-1}(Y)=k)-c\sum_{i=k+1}^{j-1}P(L_i(Y)=k)\right)+f'_k\\
	&=\frac{c\lambda_k}{\lambda_k+c}\sum_{j=k+2}^\infty v_j-\sum_{n=k+1}^\infty \lambda_n v_{n+1}P(L_n(Y)=k)-c\sum_{i=k+1}^\infty P(L_i(Y)=k)\sum_{j=i+1}^\infty v_j+f'_k.
\end{align*}
It follows from $(\ref{probLkk1})$, $(\ref{vndef1new})$, $(\ref{unsumvnrnl5})$ and $(\ref{splitnewl51})$ that
\begin{align*}
	&\sum_{n=k}^\infty P(T=n)P(L_n(Y)=k)\\
	&\quad\quad\quad\quad=\sum_{n=k}^\infty r_nP(L_n(Y)=k)+\frac{1}{c}\sum_{n=k}^\infty  \lambda_n v_{n+1}P(L_n(Y)=k)-\frac{f'_k}{c}\\
	&\quad\quad\quad\quad\quad\quad+\frac{\lambda_k r_{k+1}}{\lambda_k+c}-\frac{1}{c}\sum_{n=k+1}^\infty  \lambda_n v_{n+1}P(L_n(Y)=k)-\sum_{i=k+1}^\infty P(L_i(Y)=k)r_i+\frac{f_k'}{c}\\
	&\quad\quad\quad\quad=r_kP(L_k(Y)=k)+\frac{1}{c}\lambda_kv_{k+1}P(L_k(Y)=k)+\frac{\lambda_k r_{k+1}}{\lambda_k+c}\\
	&\quad\quad\quad\quad=r_k,~\forall k>N.
\end{align*}

It remains to prove $P(L_T(Y)=n)=r_n$ for $n\leqslant N$. 
By $(\ref{LTYprob1'})$, we have 
$$r_n=\frac{1}{\lambda_n+c}\sum_{i=n+1}^{\infty}  \binom{i}{i-n+1}\lambda_{i, i-n+1}r_i+\frac{c}{\lambda_n+c}P(T=n), ~\forall n\in\mathbb{N}^+.$$
By Lemma \ref{lemmadistLTY}, we have 
$$P(L_T(Y)=n)=\frac{1}{\lambda_n+c}\sum_{i=n+1}^{\infty}  \binom{i}{i-n+1}\lambda_{i, i-n+1}P(L_T(Y)=i)+\frac{c}{\lambda_n+c}P(T=n), ~\forall n\in\mathbb{N}^+.$$
Since $P(L_T(Y)=n)=r_n$ for all $n> N$, by replacing $n$ in the two above equations with $N$, the right-hand sides of the two equations are the same, which implies $P(L_T(Y)=N)=r_{N}$. Therefore, by replacing $n$ in the two equations with $N-1,\ldots,1$, we obtain $P(L_T(Y)=n)=r_n$ for all $n\leqslant N$. The result follows.
\end{proof}

Suppose that $0<c<E[1/X]<\infty$. It is easy to see that $E[(1-(1-X)^\alpha)/X^2]$ is monotonically increasing as a function of $\alpha$ and takes values in $[0,E[1/X]]$ for $\alpha\in[0,1]$. Then there exists a unique constant $\alpha_c\in(0,1)$ such that
\begin{equation}\label{alphac}
	E\left[\frac{1-(1-X)^{\alpha_c}}{X^2}\right]=c.
\end{equation}
We will use this notation in the proofs of Lemma \ref{lemmaincrease}, Lemma \ref{lemmadecrease} and Theorem \ref{Theoconvlambda1}.

\begin{Lemma}
\label{lemmaincrease}
Suppose that $0<c<E[1/X]<\infty$. Then there exists a distribution $\mu_0\in\mathcal{S}$ with infinite mean such that 
\begin{equation}\label{lemma41}
G_c(\mu_0)\succeq \mu_0.
\end{equation}
\end{Lemma}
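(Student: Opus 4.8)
The plan is to exhibit $\mu_{0}$ as a Sibuya-type distribution with a heavy polynomial tail and to reduce $(\ref{lemma41})$ to a comparison of two probability generating functions. Write $A(\beta):=E[(1-(1-X)^{\beta})/X^{2}]$, so that $A$ is nondecreasing, $A(\beta)\leqslant E[1/X]$ for $\beta\leqslant 1$, and $A(\alpha_{c})=c$ by $(\ref{alphac})$. For a distribution $\mu$ on $\mathbb{N}^{+}$ let $\Psi(\mu)$ be the law of $L_{T}(Y)$, where $T\sim\mu$ and $T,Y,\Pi$ are independent with $Y\sim\mathrm{Exp}(c)$; then $G_{c}(\mu)=\Psi(\mu\ast\mu)$, and $\Psi$ is monotone for the stochastic order (couple the coalescent on $\mathbb{N}^{+}$ and restrict to $[T_{1}]\subseteq[T_{2}]$, as in the monotonicity of $G_{c}$ used in the proof of Lemma~\ref{lemma18nup1}). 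Given $\mu$ with $\mu(\{n\})=r_{n}$, attach to it the sequence $s_{n}:=\frac{\lambda_{n}+c}{c}r_{n}-\frac1c\sum_{i>n}\binom{i}{i-n+1}\lambda_{i,i-n+1}r_{i}$; if $\{s_{n}\}$ is again a probability distribution and $\{r_{n}\}$ satisfies the regularity condition $(\ref{LTYprob00'})$, then Lemma~\ref{uniqueulemma} gives $\Psi(\{s_{n}\})=\mu$. Consequently it suffices to produce a distribution $\mu_{0}$ on $\mathbb{N}^{+}$, of infinite mean, whose increments obey $(\ref{LTYprob00'})$, whose companion sequence $\{s_{n}\}$ is nonnegative and sums to $1$, and for which $\mu_{0}\ast\mu_{0}\succeq\{s_{n}\}$; for then $G_{c}(\mu_{0})=\Psi(\mu_{0}\ast\mu_{0})\succeq\Psi(\{s_{n}\})=\mu_{0}$, which is $(\ref{lemma41})$.

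For $\mu_{0}$ I would take $\mu_{0}(\{n\})=\gamma_{\alpha,n}$ for all $n$ beyond a fixed index $N_{0}$, with a fixed $\alpha\in(0,\alpha_{c})$ (so $A(\alpha)<c$), and with the values $\mu_{0}(\{1\}),\dots,\mu_{0}(\{N_{0}\})$ chosen below so that $\mu_{0}$ is a probability distribution. By $(\ref{gammasimre})$, $\gamma_{\alpha,n}\sim(-1/\Gamma(-\alpha))\,n^{-1-\alpha}$ with $\alpha<1$, so $\sum_{n}n\,\mu_{0}(\{n\})=\infty$ and $\mu_{0}$ has infinite mean; and $\mu_{0}(\{n-1\})-\mu_{0}(\{n\})=\gamma_{\alpha,n-1}(1+\alpha)/n\sim\mathrm{const}\cdot n^{-2-\alpha}$ for large $n$, so $(\ref{LTYprob00'})$ holds (with the same $\alpha$), whatever the choice at small indices. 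Writing $R_{0}$ for the generating function of $\mu_{0}$, Lemma~\ref{lemmasrfunction} identifies $S_{0}(x):=\sum_{n}s_{n}x^{n}$ with the right-hand side of $(\ref{SRPGF})$ (with $R=R_{0}$); since $(1-x)R_{0}'(x)$ is bounded on $[0,1)$, Lemma~\ref{sumsn1lemma} will give $\sum_{n}s_{n}=1$ as soon as $s_{n}$ is eventually positive, and the latter follows from the explicit form $s_{n}=\gamma_{\alpha,n}+c^{-1}\gamma_{\alpha,n-1}\big(A(\alpha)+E[(1-X)^{n-1}/X]-\tfrac{1+\alpha}{n}A(n)\big)$ together with $A(n)/n\to 0$ (dominated convergence), whose bracket tends to $A(\alpha)>0$.

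It remains to secure $s_{n}\geqslant 0$ for all $n$ and $\mu_{0}\ast\mu_{0}\succeq\{s_{n}\}$. With $b_{k}:=\sum_{n\geqslant k}\big((\mu_{0}\ast\mu_{0})(\{n\})-s_{n}\big)$, Abel summation together with $\sum_{n}\big((\mu_{0}\ast\mu_{0})(\{n\})-s_{n}\big)=0$ gives $S_{0}(x)-R_{0}(x)^{2}=(1-x)\sum_{n\geqslant 2}b_{n}x^{n-1}$, so $\mu_{0}\ast\mu_{0}\succeq\{s_{n}\}$ is equivalent to $b_{n}\geqslant 0$ for all $n$, equivalently to $S_{0}\geqslant R_{0}^{2}$ on $[0,1]$. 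Near $x=1$ this is easy: a direct computation with $u=1-x$ (grouping the terms singular as $X\to 0$ so that each expectation stays finite) gives $S_{0}(x)-R_{0}(x)^{2}\geqslant(1-\tfrac{A(\alpha)}{c})u^{\alpha}-u^{2\alpha}-O(u)$ as $x\to 1^{-}$, which is positive for $x$ close to $1$ because $A(\alpha)<c$ and $\alpha<1$. Equivalently, $\mu_{0}$ has a regularly varying tail $\mu_{0}([n,\infty))\sim Cn^{-\alpha}$ with $C=-1/(\alpha\Gamma(-\alpha))>0$, so $\mu_{0}\ast\mu_{0}$ is subexponential with tail $\sim 2Cn^{-\alpha}$, while $\{s_{n}\}$ has tail $\sim(1+A(\alpha)/c)Cn^{-\alpha}$ with $1+A(\alpha)/c<2$; hence $b_{n}>0$ for all $n$ beyond some $N_{1}$.

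Thus only finitely many inequalities survive, namely $s_{n}\geqslant 0$ and $b_{n}\geqslant 0$ for $n\leqslant\max(N_{0},N_{1})$, and the step I expect to be the main obstacle is choosing the low-order probabilities $\mu_{0}(\{1\}),\dots,\mu_{0}(\{N_{0}\})$ — subject only to $\sum_{n\leqslant N_{0}}\mu_{0}(\{n\})=\sum_{n\leqslant N_{0}}\gamma_{\alpha,n}$ — so that all of these hold at once. This is a finite-dimensional linear-feasibility problem, which should be solvable because near the thresholds the constrained quantities carry a fixed positive slack: the $s_{n}$ a margin $\approx A(\alpha)\gamma_{\alpha,n-1}/c>0$ coming from the bracket above, and the $b_{n}$ the gap between the factors $2$ and $1+A(\alpha)/c$ in the tail comparison. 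With such a $\mu_{0}$ fixed, the reduction of the first paragraph yields $G_{c}(\mu_{0})\succeq\mu_{0}$ with $\mu_{0}$ of infinite mean.
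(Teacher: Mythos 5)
Your overall reduction — writing $G_c(\mu_0)=\Psi(\mu_0\ast\mu_0)$ and using monotonicity of $\Psi$ together with the companion-sequence identity $\Psi(\{s_n\})=\mu_0$ from Lemma~\ref{uniqueulemma} — is a clean and valid way to organize the argument, and your choice to take the tail of $\mu_0$ equal to a Sibuya tail $\gamma_{\alpha,n}$ with $\alpha$ \emph{strictly below} $\alpha_c$ (so $A(\alpha)<c$) is an interesting alternative to the paper's choice. The paper uses $\alpha_c$ itself (making the leading tail coefficients of $\{s_n\}$ and $\mu_0\ast\mu_0$ both equal to $2a\gamma_{\alpha_c,n}$) and then introduces a correction term $-\varepsilon a(1-(1-x)^\beta)$ with $\beta>\alpha_c$ to break the tie; you avoid the $\beta$ correction entirely by letting $A(\alpha)<c$ open a gap of size $2-(1+A(\alpha)/c)>0$ at leading order. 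Your tail asymptotics are essentially right (cf.\ (\ref{zeta223}), (\ref{zetato0}), (\ref{gammasimre})): $s_n\sim(1+A(\alpha)/c)\gamma_{\alpha,n}$ while $(\mu_0\ast\mu_0)(\{n\})\sim 2\gamma_{\alpha,n}$, so in fact $d_n=s_n-\tilde r_n<0$ and therefore $b_n>0$ for all large $n$, and the infinite-mean and condition-(\ref{LTYprob00'}) checks go through.

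The genuine gap is the step you yourself flag as the main obstacle: you never establish that the finite collection of low-order inequalities ($s_n\geqslant 0$ and $b_n\geqslant 0$ for $n\leqslant\max(N_0,N_1)$) can actually be satisfied by a choice of $\mu_0(\{1\}),\dots,\mu_0(\{N_0\})$ subject to the fixed total mass. ``Fixed positive slack near the thresholds'' is a heuristic, not a proof; the slack in $s_n\geqslant 0$ degrades as $n$ decreases (indeed for $n=2$ the quantity $\zeta_{\alpha,2}+A(\alpha)$ can be of order $-1$, so with a pure Sibuya tail $s_2$ is negative whenever $c<1$), and the constraints are coupled because each $s_n$ depends on all $r_i$, $i\geqslant n$, and each $b_n$ depends on the convolution. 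Moreover, Lemma~\ref{uniqueulemma} is only usable if $\{s_n\}$ is a genuine probability mass function; if any $s_n<0$ your identity $\Psi(\{s_n\})=\mu_0$ does not make sense. The paper sidesteps exactly this issue by (i) scaling the tail by a small factor $a$ and loading the remaining mass onto $n=1$ — see $R(x)=(1-a+\varepsilon a)x+a(\cdots)$ — so that (\ref{zetaaupper}) forces $d_n<0$ for \emph{all} $n\geqslant 2$ once $a<1/4$ and $\varepsilon$ is small, and (ii) even then not assuming $s_n\geqslant 0$ for small $n$, but instead setting $P(T=n)=\max\{0,s_n\}$ and tracking through (\ref{r'inl6new})--(\ref{stochdomi2}) that the modified $r_n'$ still dominates $r_n$. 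Your sketch would need either a concrete verification of the feasibility (which I suspect requires introducing a scaling parameter like the paper's $a$, at which point you are close to the paper's construction) or the $\max\{0,s_n\}$ workaround with the accompanying bookkeeping. As written, the proof is incomplete at precisely the point where the paper does the hard work.
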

\begin{proof}
We will show that there exists a distribution with a probability generating function $R$ that has the following property. Suppose the Taylor series of the function $R^2(x)$ that centered at $x=0$ is $R^2(x)=\sum_{n=1}^\infty \tilde{r}_n x^n$. Suppose the Taylor series of the function $S(x)$ that satisfies $(\ref{SRPGF})$ centered at $x=0$ is $S(x)=\sum_{n=1}^\infty s_n x^n$. Then $\sum_{i=1}^ns_i\geqslant \sum_{i=1}^n\tilde{r}_i$ for all $n\in\mathbb{N}^+$. We will then use this property to show that this distribution satisfies $(\ref{lemma41})$.

Firstly, we will derive the Taylor series. Let 
$$R(x)=(1-a+\varepsilon a)x+a(1-(1-x)^{\alpha_c})-\varepsilon a(1-(1-x)^\beta),$$
where $\beta\in(\alpha_c,2\alpha_c\wedge 1)$, $a\in(0,1]$, and $\varepsilon\in[0,\alpha_c/\beta)$ will be chosen later. Then $R(1)=1$ and
\begin{equation}\label{rangelemma28sw}
	1-a+\varepsilon a<1.
\end{equation}
Recall from $(\ref{def11xalpha1})$ that
	$1-(1-x)^{\alpha}=\sum_{n=1}^\infty \gamma_{\alpha,n}x^n.$
Then the Taylor series for the function $R(x)$ centered at $x=0$ is 
\begin{equation}\label{taylorofR1}
	R(x)=\sum_{n=1}^\infty r_n x^n,
\end{equation}
where
\begin{equation}\label{taylorofR2}
r_n=
\begin{cases}	
1-a+\varepsilon a+a\gamma_{\alpha_c,1}-\varepsilon a \gamma_{\beta,1}, & n=1,\\
a\gamma_{\alpha_c,n}-\varepsilon a \gamma_{\beta,n}, & n\geqslant 2.
\end{cases}
\end{equation}
Since $\varepsilon<\alpha_c/\beta$, we have $\gamma_{\alpha_c,1}=\alpha_c>\varepsilon\beta=\varepsilon\gamma_{\beta,1}$. If $\gamma_{\alpha_c,n}>\varepsilon\gamma_{\beta,n}$, then
$$\gamma_{\alpha_c,n+1}=\frac{n-\alpha_c}{n+1}\gamma_{\alpha_c,n}>\frac{n-\beta}{n+1}\varepsilon\gamma_{\beta,n}=\varepsilon\gamma_{\beta,n+1}.$$
It follows by induction that $r_n>0$ for all $n\geqslant 1$, which implies that $R(x)$ is the probability generating function of a positive integer-valued random variable. 

We can also expand $R(X+(1-X)x)$ and $R((1-X)x)$ at $x=0$. We get
\begin{align}\label{taylorofR12}
R(X+(1-X)x)
&=(a-\varepsilon a-a(1-X)^{\alpha_c}+\varepsilon a(1-X)^{\beta})+(1-a+\varepsilon a)(X+(1-X)x)\nonumber\\
&\quad\quad\quad\quad\quad\quad\quad+a\sum_{n=1}^\infty (1-X)^{\alpha_c}\gamma_{\alpha_c,n} x^n -\varepsilon a\sum_{n=1}^\infty (1-X)^{\beta}\gamma_{\beta,n} x^n,	
\end{align}
and
\begin{align}\label{taylorofR11}
R((1-X)x)=(1-a+\varepsilon a)(1-X)x+a\sum_{n=1}^\infty \gamma_{\alpha_c,n} (1-X)^n x^n -\varepsilon a\sum_{n=1}^\infty \gamma_{\beta,n} (1-X)^n x^n.	
\end{align}
Let 
\begin{equation}\label{Sx1}
S(x)=\frac{1}{c}E\left[\frac{R(x)-R((1-X)x)}{X^2}\right]-\frac{x}{c}E\left[\frac{R(X+(1-X)x)-R((1-X)x)}{X^2}\right]+R(x).	
\end{equation}
Note that 
\begin{equation}\label{gamman_1}
\gamma_{\alpha,n-1} =\frac{n}{n-1-\alpha}\gamma_{\alpha,n}=\gamma_{\alpha,n}+\frac{1+\alpha}{n-1-\alpha}\gamma_{\alpha,n}.
\end{equation}
By $(\ref{taylorofR1})$, $(\ref{taylorofR2})$, $(\ref{taylorofR12})$, $(\ref{taylorofR11})$, $(\ref{Sx1})$ and $(\ref{gamman_1})$, the Taylor series for the function $S(x)$ centered at $x=0$ is
\begin{align*}
	S(x)=\sum_{n=1}^\infty s_n x^n,
\end{align*}
where 
\begin{equation*}
s_n=
\begin{cases}
	\frac{a}{c}E\left[\frac{(1-X)^{\alpha_c}+\alpha_c X-1}{X^2}\right]-\frac{\varepsilon a}{c}E\left[\frac{(1-X)^{\beta}+\beta X-1}{X^2}\right]+a\gamma_{\alpha_c,1}-\varepsilon a\gamma_{\beta,1}+1-a+\varepsilon a, & n=1,\\
	\frac{a}{c}E\left[\frac{1-(1-X)^n-\frac{n(1-X)^{\alpha_c}}{n-1-\alpha_c}+\frac{n(1-X)^{n-1}}{n-1-\alpha_c}}{X^2}\right]\gamma_{\alpha_c,n}-\frac{\varepsilon a}{c}E\left[\frac{1-(1-X)^n-\frac{n(1-X)^{\beta}}{n-1-\beta}+\frac{n(1-X)^{n-1}}{n-1-\beta}}{X^2}\right]\gamma_{\beta,n}\\
	\quad\quad\quad\quad\quad\quad\quad\quad\quad\quad\quad\quad\quad\quad\quad\quad\quad\quad\quad\quad +a\gamma_{\alpha_c,n}-\varepsilon a\gamma_{\beta,n}. & n\geqslant 2.
\end{cases}
\end{equation*}
We also have 
\begin{align*}
	R(x)^2&=\left((1-a+\varepsilon a)x+a-\varepsilon a-a(1-x)^{\alpha_c}+\varepsilon a(1-x)^\beta\right)^2\\
	      &=(1-a+\varepsilon a)^2x^2+2a(a-\varepsilon a)\left(1-(1-x)^{\alpha_c}\right)+2a(1-a+\varepsilon a)x\left(1-(1-x)^{\alpha_c}\right)\\
	      &\quad -2\varepsilon a(a-\varepsilon a)\left(1-(1-x)^\beta\right)-2\varepsilon a(1-a+\varepsilon a)x\left(1-(1-x)^\beta\right) \\
	      &\quad -a^2\left(1-(1-x)^{2\alpha_c}\right)+2\varepsilon a^2\left(1-(1-x)^{\alpha_c+\beta}\right)-\varepsilon^2 a^2 \left(1-(1-x)^{2\beta}\right).
\end{align*}
Note that the Taylor series of $x\left(1-(1-x)^\alpha\right)$ at $x=0$ is $\sum_{n=2}^\infty \gamma_{\alpha,n-1} x^n$.
Let 
\begin{align}\label{eq123lemma281}
	\tilde{r}'_n\coloneqq2a\gamma_{\alpha_c,n}+&\frac{2a(1-a+\varepsilon a)(1+\alpha_c)}{n-1-\alpha_c}\gamma_{\alpha_c,n}-2\varepsilon a\gamma_{\beta,n}-\frac{2\varepsilon a(1-a+\varepsilon a)(1+\beta)}{n-1-\beta}\gamma_{\beta,n}\nonumber\\
	&-a^2\gamma_{2\alpha_c,n}+2\varepsilon a^2\gamma_{\alpha_c+\beta,n}-\varepsilon^2 a^2\gamma_{2\beta,n}.
\end{align}
Then the Taylor series for the function $R(x)^2$ centered at $x=0$ is
\begin{align*}
	R(x)^2&=\sum_{n=1}^\infty \tilde{r}_n x^n,
\end{align*}
where
\begin{equation}\label{eq22slemma282}
\tilde{r}_n=
	\begin{cases}
		0, & n=1,\\
		(1-a+\varepsilon a)^2+\tilde{r}_2', & n=2,\\
		\tilde{r}_n', & n\geqslant3.
	\end{cases}
\end{equation}
Note that $R^2(x)$ is the probability generating function of the sum of two independent random variables with probability generating function $R(x)$, which implies 
$$\tilde{r}_n\geqslant 0,~\forall n\in\mathbb{N}^+.$$ 

Our next goal is to show that for fixed $\beta\in(\alpha_c,2\alpha_c\wedge1)$, there exists $a\in(0,1]$ and $\varepsilon\in[0,\alpha_c/\beta)$ such that
\begin{equation}\label{sumincrease1} 
\sum_{i=1}^ns_i\geqslant \sum_{i=1}^n\tilde{r}_i,~\forall n\in\mathbb{N}^+.
\end{equation} 
Let
$$d_n\coloneqq s_n-\tilde{r}_n,~\forall n\in\mathbb{N}^+.$$
Firstly, we will show $ d_n< 0$ for all $n\geqslant 2$.
For $0<\alpha<1$ and $n\geqslant 2$, let
\begin{equation}\label{defzetal61}
\zeta_{\alpha,n}\coloneqq E\left[\frac{1-(1-X)^n-\frac{n(1-X)^{\alpha}}{n-1-\alpha}+\frac{n(1-X)^{n-1}}{n-1-\alpha}}{X^2}\right]-E\left[\frac{1-(1-X)^\alpha}{X^2}\right].
\end{equation}
Let 
\begin{equation*}\label{Createcbeta}
c_\beta\coloneqq E\left[\frac{1-(1-X)^{\beta}}{X^2}\right].
\end{equation*}
Recall from $(\ref{alphac})$ that $c=E[\left(1-(1-X)^{\alpha_c}\right)/X^2]$.
Let 
\begin{align}\label{diff_n}
d_n'\coloneqq &\frac{a}{c}\left(\zeta_{\alpha_c,n}-\frac{2c(1-a+\varepsilon a)(1+\alpha_c)}{n-1-\alpha_c}\right)\gamma_{\alpha_c,n} -\frac{\varepsilon a}{c}\left(\zeta_{\beta,n}-\frac{2c(1-a+\varepsilon a)(1+\beta)}{n-1-\beta}\right)\gamma_{\beta,n}\nonumber\\
   &\quad\quad\quad\quad-\frac{\varepsilon a}{c}(c_\beta-c)\gamma_{\beta,n}+a^2\gamma_{2\alpha_c,n}-2\varepsilon a^2\gamma_{\alpha_c+\beta,n}+\varepsilon^2 a^2\gamma_{2\beta,n}.
\end{align}
Then 
\begin{equation*}
d_n=
\begin{cases}
	d_2'-(1-a+\varepsilon a)^2, & n=2,\\
	d_n', & n\geqslant 3.
\end{cases}
\end{equation*} 
Therefore, it suffices to show $d_n'<0$ for all $n\geqslant2$.
Note that $\zeta_{\alpha,n}$ can be rewritten as
\begin{align}\label{zeta223}
\zeta_{\alpha,n}=\frac{1}{n-1-\alpha}\bigg((1+\alpha)&E\left[\frac{1-(1-X)^\alpha}{X^2}\right]-\alpha E\left[\frac{1-(1-X)^n}{X^2}\right]\nonumber\\
&-E\left[\frac{1-(1-X)^n-nX(1-X)^{n-1}}{X^2}\right]\bigg).
\end{align}
Recall from (\ref{alphac}) that $c=E[(1-(1-X)^{\alpha_c})/X^2]$. Since $0<\alpha_c<1$ and $0\leqslant X\leqslant 1$, we have
\begin{align*}\label{zeta1}
	\zeta_{\alpha_c,n}
	\leqslant \frac{1+\alpha_c}{n-1-\alpha_c}c,~\forall n\geqslant 2.
\end{align*} 
By choosing
\begin{equation}\label{boundbbb}
	a\in\left(0,\frac{1}{4}\right),
\end{equation}
we can make $1-a+\varepsilon a>3/4$, which implies that for all $n\geqslant2$,
\begin{equation}\label{zetaaupper}
	\zeta_{\alpha_c,n}-\frac{2c(1-a+\varepsilon a)(1+\alpha_c)}{n-1-\alpha_c}\leqslant \frac{(1+\alpha_c)c-2c(1-a+\varepsilon a)(1+\alpha_c)}{n-1-\alpha_c}<-\frac{(1+\alpha_c)c}{2(n-1-\alpha_c)}<0.
\end{equation}
Note that $E[1/X]<\infty$ and
$$0\leqslant \frac{1-(1-X)^n}{(n-1-\alpha)X^2}\leqslant \frac{n}{(n-1-\alpha)X}\leqslant \frac{2}{(1-\alpha)X},~\forall n\geqslant2.$$
Therefore, for any given $\alpha\in(0,1)$, by the Dominated Convergence Theorem, we have
$$\frac{1}{n-1-\alpha}E\left[\frac{1-(1-X)^n}{X^2}\right]\to0,~\text{as }n\to\infty,$$
and
$$\frac{1}{n-1-\alpha}E\left[\frac{1-(1-X)^n-nX(1-X)^{n-1}}{X^2}\right]\to0,~\text{as }n\to\infty.$$
It follows that
\begin{equation}\label{zetato0}
	\zeta_{\alpha,n}\to0,~\text{as }n\to\infty.
\end{equation}
Therefore, 
$$\zeta_n^{\beta}+c_\beta-c-\frac{2c(1+\beta)}{n-1-\beta}\to c_\beta-c,~\text{as }n\to\infty.$$
Since $\beta>\alpha_c$, we have $c_\beta>c$, which implies that there exists $N_1\in\mathbb{N}^+$ such that 
\begin{equation*}\label{zetablower}
	 \zeta_{\beta,n}+c_\beta-c-\frac{2c(1+\beta)}{n-1-\beta} \geqslant0,~\forall n>N_1.
\end{equation*}
It is easy to see that for any $\alpha\in(0,1)$, we have $\gamma_{\alpha,n}>0$ for all $n\in\mathbb{N}^+$. Then 
$$\left(-\frac{(1+\alpha_c)c}{2(n-1-\alpha_c)}\right)\gamma_{\alpha_c,n} -\varepsilon \left(\zeta_{\beta,n}+c_\beta-c-\frac{2c(1+\beta)}{n-1-\beta}\right)\gamma_{\beta,n}<0,~\forall n>N_1,~\forall\varepsilon>0.$$
By choosing $\varepsilon$ such that
\begin{equation}\label{boundvare1}
0<\varepsilon<\frac{\frac{(1+\alpha_c)c}{2(N_1-1-\alpha_c)}\gamma_{\alpha_c,N_1}}{\left|\min_{2\leqslant n\leqslant N_1}  \left(\zeta_{\beta,n}+c_\beta-c-\frac{2c(1+\beta)}{n-1-\beta}\right)\gamma_{\beta,n}\right|},
\end{equation}
we have
$$\left(-\frac{(1+\alpha_c)c}{2(n-1-\alpha_c)}\right)\gamma_{\alpha_c,n} -\varepsilon \left(\zeta_{\beta,n}+c_\beta-c-\frac{2c(1+\beta)}{n-1-\beta}\right)\gamma_{\beta,n}<0,~\forall n\in\{2,\ldots,N_1\}.$$
For $n\geqslant 2$, let
\begin{equation*}\label{etadef}
    \eta_n\coloneqq \left(-\frac{(1+\alpha_c)c}{2(n-1-\alpha_c)}\right)\gamma_{\alpha_c,n} -\varepsilon \left(\zeta_n^{\beta}+c_\beta-c-\frac{2c(1+\beta)}{n-1-\beta}\right)\gamma_{\beta,n}.	
\end{equation*}
Then 
\begin{equation}\label{etabound}
	\eta_n <0,~\forall n\geqslant 2.
\end{equation}
It follows from (\ref{rangelemma28sw})  and $(\ref{zetaaupper})$ that for all $n\geqslant2$,
\begin{align}\label{etabound3}
	\eta_n &>\left(\zeta_{\alpha_c,n}-\frac{2c(1-a+\varepsilon a)(1+\alpha_c)}{n-1-\alpha_c}\right)\gamma_{\alpha_c,n}\nonumber\\
	&\quad\quad\quad\quad-\varepsilon \left(\zeta_{\beta,n}+c_\beta-c-\frac{2c(1-a+\varepsilon a)(1+\beta)}{n-1-\beta}\right)\gamma_{\beta,n}.
\end{align}
It follows from $(\ref{zetato0})$ that there exists $N_2\in\mathbb{N}^+$ such that for all $n>N_2$, we have
$$\frac{1}{c\gamma_{\beta,n}}\eta_n=\left(-\frac{(1+\alpha_c)c}{2(n-1-\alpha_c)}\right)\frac{\gamma_{\alpha_c,n}} {c\gamma_{\beta,n}}-\frac{\varepsilon }{c}\left(\zeta_{\beta,n}+c_\beta-c-\frac{2c(1+\beta)}{n-1-\beta}\right)<\frac{-\varepsilon(c_\beta-c)}{2c},$$
which implies 
$$\ell_1\coloneqq \inf_{n\geqslant 2}\left\{\left|\frac{1}{c\gamma_{\beta,n}}\eta_n\right|\right\}\neq0.$$
Recall from (\ref{gammasimre}) that $\gamma_{\alpha,n}=O(n^{-1-\alpha})$ for $\alpha>0$ and $\alpha\notin\mathbb{Z}$. Note that $\gamma_{1,n}=0$ for all $n\geqslant 2$. Therefore,
for $0<\alpha_1<\alpha_2<2$ and $\alpha_1\neq 1$, 
\begin{equation}\label{limitgammaratio}
\frac{\gamma_{\alpha_2,n}}{\gamma_{\alpha_1,n}}
\to0,~\text{as }n\to\infty.
\end{equation}
Then for $\beta\in(\alpha_c,2\alpha_c\wedge1)$, we have
$$\frac{\gamma_{2\alpha_c,n}-2\varepsilon \gamma_{\alpha_c+\beta,n}+\varepsilon^2 \gamma_{2\beta,n}}{\gamma_{\beta,n}}\to 0,~\text{as }n\to\infty.$$
It follows that 
$$\ell_2\coloneqq \max_{n\geqslant 2}\left|\frac{\gamma_{2\alpha_c,n}-2\varepsilon \gamma_{\alpha_c+\beta,n}+\varepsilon^2 \gamma_{2\beta,n}}{\gamma_{\beta,n}}\right|<\infty.$$
By choosing $a$ such that (\ref{boundbbb}) holds and
\begin{equation}\label{boundofb2}
	a\in\left(0,\frac{\ell_1}{\ell_2}\right), 
\end{equation}
it follows from $(\ref{etabound})$ that 
\begin{equation}\label{etabound2}
	\frac{1}{c\gamma_{\beta,n}}\eta_n+a\frac{\gamma_{2\alpha_c,n}-2\varepsilon \gamma_{\alpha_c+\beta,n}+\varepsilon^2 \gamma_{2\beta,n}}{\gamma_{\beta,n}}<0, ~\forall n\geqslant2.
\end{equation}
By $(\ref{diff_n})$, $(\ref{etabound3})$ and $(\ref{etabound2})$, we have
$$d_n'< a\gamma_{\beta,n}\left(\frac{1}{c\gamma_{\beta,n}}\eta_n+a\frac{\gamma_{2\alpha_c,n}-2\varepsilon \gamma_{\alpha_c+\beta,n}+\varepsilon^2 \gamma_{2\beta,n}}{\gamma_{\beta,n}}
\right)<0, ~\forall n\geqslant2.$$

Now we will show that $\sum_{n=1}^\infty d_n=0$. Recall that for $n\geqslant 2$, 
$$s_n=\frac{a}{c}(\zeta_{\alpha_c,n}+c)\gamma_{\alpha_c,n}-\frac{\varepsilon a}{c}(\zeta_{\beta,n}+c_\beta)\gamma_{\beta,n}+a\gamma_{\alpha_c,n}- \varepsilon a \gamma_{\beta,n}.$$
By $(\ref{zetato0})$ and $(\ref{limitgammaratio})$, we have, as $n\to\infty$,
$$\frac{s_n}{\gamma_{\alpha_c,n}}=\frac{a}{c}(\zeta_{\alpha_c,n}+c)-\frac{\varepsilon a}{c}(\zeta_{\beta,n}+c_\beta)\frac{\gamma_{\beta,n}}{\gamma_{\alpha_c,n}}+a- \varepsilon a \frac{\gamma_{\beta,n}}{\gamma_{\alpha_c,n}}\to2a>0,$$
which implies that there exists $N_3\in\mathbb{N}^+$ such that $s_n\geqslant0$ for all $n>N_3$. 
Note that $\sum_{n=1}^\infty r_n=1$ and  $(1-x)R'(x)=O((1-x)^{\alpha_c})$ as $x\to1^-$, which implies $(1-x)R'(x)$ is bounded on $[0,1)$. Then $\sum_{n=1}^\infty s_n =1$ because of Lemma \ref{lemmasrfunction} and Lemma \ref{sumsn1lemma}.  
Since $\sum_{n=1}^\infty\tilde{r}_n =R^2(1)=1$, we have 
$$\sum_{n=1}^\infty d_n=\sum_{n=1}^\infty s_n -\sum_{n=1}^\infty \tilde{r}_n=0.$$ 
The result $(\ref{sumincrease1})$ follows.

Now we are going to show that the distribution with probability generating function $R(x)$ satisfies $(\ref{lemma41})$.
Since $s_n-\tilde{r}_n=d_n<0$ for all $n\geqslant 2$, we have  
$$\sum_{n=2}^\infty\max\{0,s_n\}\leqslant \sum_{n=2}^\infty\max\{0,\tilde{r}_n\}=\sum_{n=2}^\infty\tilde{r}_n\leqslant \sum_{n=1}^\infty\tilde{r}_n\leqslant1.$$
Therefore, we can define a random variable $T$ on $\mathbb{N}^+$ with
\begin{equation*}\label{tdist1}
P(T=n)=
\begin{cases}
	1-\sum_{n=2}^\infty \max\{0,s_n\}, & n=1,\\
	\max\{0,s_n\}, & n\geqslant 2.\\
\end{cases}	
\end{equation*}
Let $\{r_n'\}_{n=1}^\infty$ be a sequence of real numbers such that $r_n'=r_n$ for all $n>N_3$, and 
\begin{equation}\label{r'inl6new} 
r_n'=\frac{c}{\lambda_n+c}P(T=n)+\frac{1}{\lambda_n+c}\sum_{i=n+1}^{\infty} \binom{i}{i-n+1}\lambda_{i, i-n+1}r_i',~\forall n\leqslant N_3.
\end{equation} 
 By $(\ref{Sx1})$ and Lemma \ref{lemmasrfunction}, we have 
\begin{equation*}\label{r'inl6new2}
r_n=\frac{c}{\lambda_n+c}s_n+\frac{1}{\lambda_n+c}\sum_{i=n+1}^{\infty} \binom{i}{i-n+1}\lambda_{i, i-n+1}r_i, ~\forall n\in\mathbb{N}^+.
\end{equation*}
Since $s_n\geqslant 0$ for all $n>N_3$, we have $P(T=n)=s_n$ for all $n>N_3$. Then
\begin{equation}\label{r'inl6newl6688} 
 r_n'=\frac{c}{\lambda_n+c}P(T=n)+\frac{1}{\lambda_n+c}\sum_{i=n+1}^{\infty} \binom{i}{i-n+1}\lambda_{i, i-n+1}r_i',~\forall  n>N_3.
 \end{equation}
 Recall from $(\ref{def11xalpha122})$ that $\gamma_{\alpha,n}\coloneqq(-1)^{n+1}\binom{\alpha}{n}$. Then for $n\geqslant 3$,
\begin{align*}
r_{n-1}-r_n&=a(-1)^{n}	\left(\binom{\alpha_c}{n-1}+\binom{\alpha_c}{n}\right)-\varepsilon a(-1)^{n}	\left(\binom{\beta}{n-1}+\binom{\beta}{n}\right)\\
&=-a\gamma_{\alpha_c+1,n}+\varepsilon a \gamma_{\beta+1,n}.
\end{align*}
It follows from (\ref{gammasimre}) and (\ref{limitgammaratio}) that 
\begin{equation}\label{diffrnn1ratio1}
	\frac{r_{n-1}-r_n}{n^{-2-\alpha_c}}=\frac{-a\gamma_{\alpha_c+1,n}+\varepsilon a \gamma_{\beta+1,n}}{n^{-2-\alpha_c}}\to\frac{a}{\Gamma(-1-\alpha_c)},~\text{as }n\to\infty.
\end{equation}
By Lemma \ref{uniqueulemma}, $(\ref{r'inl6new})$, $(\ref{r'inl6newl6688})$ and (\ref{diffrnn1ratio1}), we have
$$P(L_T(Y)=n)=r_n', ~\forall n\in\mathbb{N}^+.$$
Since $r_n'=r_n$ for $n>N_3$ and $P(T=n)\geqslant s_n$ for $n\geqslant2$, we have $r_n'\geqslant r_n>0$ for all $n\geqslant2$. 
Let $W_1$ be a random variable with $P(W_1=n)=r_n$ for $n\in\mathbb{N}^+$. Note that
$$P(L_T(Y)\geqslant 1)=1= P(W_1\geqslant 1).$$
Since $r_n'\geqslant r_n$ for all $n\geqslant 2$, we have
$$P(L_T(Y)\geqslant n)\geqslant P(W_1\geqslant n),~\forall n\geqslant 2,$$
which implies 
\begin{equation}\label{stochdomi1}
	L_T(Y)\succeq W_1.
\end{equation}
Let $W_2$ be a random variable with the same distribution as $W_1$ and independent of $W_1$. Then $R^2(x)$ is the probability generating function of $W_1+W_2$ and $P(W_1+W_2=n)=\tilde{r}_n$ for all $n\geqslant1$. Recall that $d_n=s_n-\tilde{r}_n<0$ for all $n\geqslant2$. Then
$$P(T=n)=\max\{0,s_n\}\leqslant \tilde{r}_n=P(W_1+W_2=n),~\forall n\geqslant 2,$$
which implies 
\begin{equation}\label{stochdomi2}
	T\preceq W_1+W_2.
\end{equation}
By $(\ref{stochdomi1})$ and $(\ref{stochdomi2})$, we have
$$ W_1\preceq L_T(Y)\preceq L_{W_1+W_2}(Y).$$
It follows that by letting $\mu_0$ be a distribution with 
\begin{equation}\label{taylorofRdown2}
\mu_0(\{n\})=r_n=
\begin{cases}	
1-a+\varepsilon a+a\gamma_{\alpha_c,1}-\varepsilon a \gamma_{\beta,1}, & n=1,\\
a\gamma_{\alpha_c,n}-\varepsilon a \gamma_{\beta,n}, & n\geqslant 2.
\end{cases}
\end{equation}
where $\beta\in(\alpha_c,2\alpha_c\wedge1)$, $\varepsilon$ is chosen to satisfy $(\ref{boundvare1})$ and $a$ is chosen to satisfy $(\ref{boundbbb})$ and $(\ref{boundofb2})$,  
we have 
$$\mu_0\preceq G_c(\mu_0).$$
Because $R'(1)=\infty$, the distribution $\mu_0$ has infinite mean.
\end{proof}

\begin{Lemma}
\label{lemmadecrease}
Suppose that $0<c< E[(1-(1-X)^{1/2})/X^2]$ and $E[1/X]<\infty$. If there exists $p<1$ such that
\begin{equation}\label{downtbound}
	n^{-p}E\left[\frac{1-(1-X)^n}{X^2}\right]\to 0\text{ as }n\to\infty,
\end{equation}
then there exists a distribution $\nu_0\in\mathcal{S}$ with infinite mean such that 
\begin{equation}\label{Gdedown}
G_c(\nu_0)\preceq \nu_0.
\end{equation}
\end{Lemma}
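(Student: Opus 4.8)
The plan is to mirror the proof of Lemma~\ref{lemmaincrease}, with every stochastic comparison reversed. I would build $\nu_0$ from its probability generating function $R$, taken to be a perturbation of $1-(1-x)^{\alpha_c}$ involving a second exponent $\beta$, now chosen in a window of the form $\beta\in\bigl(\alpha_c,(1+\alpha_c-p)\wedge1\bigr)$ — this is nonempty since $p<1$, and we may take $p\in(0,1)$ because (\ref{downtbound}) with any valid $p$ continues to hold with every larger exponent — together with small mixing parameters $a,\varepsilon>0$ to be fixed later and, if needed, a small linear term. Exactly as in Lemma~\ref{lemmaincrease}, one checks that $R$ is a genuine probability generating function of a distribution $\nu_0$ on $\mathbb{N}^+$ with $r_n>0$ for $n\geqslant2$, and that $R'(1)=\infty$, so $\nu_0$ has infinite mean (and no mass at $\infty$). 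One then expands $R$, the auxiliary function $S$ of (\ref{SRPGF}), and $R^2$ in power series with coefficients $r_n$, $s_n$, $\tilde r_n$, and — repeating the computation that produces (\ref{diff_n}) but with the sign of the $\beta$-perturbation reversed — writes down $d_n:=s_n-\tilde r_n$, noting $s_n=d_n+\tilde r_n$ with $\tilde r_n\geqslant0$.

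The heart of the argument is to choose $\beta,a,\varepsilon$ so that $d_n>0$ for every $n\geqslant2$ while $\sum_{n\geqslant1}d_n=0$, the latter forcing $d_1=s_1<0$ and hence $\sum_{i=1}^m d_i<0$ for every finite $m$. For $\sum_n d_n=0$ I would use Lemma~\ref{lemmasrfunction} together with Lemma~\ref{sumsn1lemma}, whose hypotheses are met because $(1-x)R'(x)=O((1-x)^{\alpha_c})$ is bounded on $[0,1)$ and $s_n$ is eventually positive. For the positivity of $d_n$: after the sign reversal the contributions of $1-(1-x)^{2\alpha_c}$, $1-(1-x)^{\alpha_c+\beta}$ and $1-(1-x)^{2\beta}$ to $d_n$ are positive, and since $c_\beta:=E[(1-(1-X)^\beta)/X^2]>c$, for large $n$ the dominant term is $\tfrac{\varepsilon a}{c}(c_\beta-c)\gamma_{\beta,n}=\Theta(n^{-1-\beta})>0$. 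The one negative term — the ``free'' term $\tfrac{a}{c}\bigl(\zeta_{\alpha_c,n}-\tfrac{2c(1+\alpha_c)(1-a-\varepsilon a)}{n-1-\alpha_c}\bigr)\gamma_{\alpha_c,n}$, which was an ally in Lemma~\ref{lemmaincrease} and is now an adversary — must be dominated by $\gamma_{\beta,n}$; this is exactly where (\ref{downtbound}) is used, since the rewriting (\ref{zeta223}) and the bound $\lambda_n\leqslant E[(1-(1-X)^n)/X^2]$ give $\zeta_{\alpha_c,n}\geqslant-\tfrac{1+\alpha_c}{n-1-\alpha_c}E[(1-(1-X)^n)/X^2]=-o(n^{p-1})$, making the negative term $-o(n^{p-2-\alpha_c})$, which is $o(\gamma_{\beta,n})$ precisely because $\beta<1+\alpha_c-p$. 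As in Lemma~\ref{lemmaincrease} one then fixes an index beyond which this asymptotic domination holds and adjusts the free parameters to take care of the finitely many small $n$; I anticipate real care is needed here, since for small $n$ the free term is negative and the $\beta$-term need not be positive (the bracket $\zeta_{\beta,n}+c_\beta-c-\tfrac{2c(1+\beta)}{n-1-\beta}$ is only asymptotically positive) while only the $O(a^2)$ terms are positive — so the linear coefficient of $R$ has to be kept small (its square feeds $\tilde r_2$) and the balance of $a$ against $\varepsilon$ has to be arranged with the hypothesis (\ref{downtbound}) in hand. This sign analysis is the main obstacle.

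With the sign pattern established the coupling is routine. Since $s_n>0$ for $n\geqslant2$ but $\sum_{n\geqslant2}s_n=1-s_1>1$, the variable $T$ cannot be given law $(s_n)_{n\geqslant1}$; I would instead take $n_0$ minimal with $\sum_{n\geqslant n_0}s_n\leqslant1$, set $P(T=n)=s_n$ for $n\geqslant n_0$ and $P(T=1)=0$, and distribute the leftover mass $1-\sum_{n\geqslant n_0}s_n$ over $\{2,\dots,n_0-1\}$ from the largest index downwards, so that $P(T=n)\leqslant s_n$ for all $n$. Then $\sum_{i=1}^m P(T=i)\leqslant\sum_{i=1}^m\tilde r_i$ for every $m$ — for $m\geqslant n_0$ the left-hand side equals $\sum_{i=1}^m\tilde r_i+\sum_{i=1}^m d_i<\sum_{i=1}^m\tilde r_i$, and for $m<n_0$ the top-down allocation makes it $\max\{0,\sum_{i=2}^m s_i-|d_1|\}\leqslant\sum_{i=1}^m\tilde r_i$ (using $\sum_{i=2}^m d_i<|d_1|$) — so $T\succeq W_1+W_2$ for independent $W_1,W_2\sim\nu_0$. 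Setting $r_n'=r_n$ for $n\geqslant n_0$ and, for $n<n_0$, $r_n'=\tfrac{c}{\lambda_n+c}P(T=n)+\tfrac1{\lambda_n+c}\sum_{i>n}\binom{i}{i-n+1}\lambda_{i,i-n+1}r_i'$, one verifies, just as at the end of Lemma~\ref{lemmaincrease}, that $P(T=n)$ is the inverse formula of Lemma~\ref{lemmadistLTY} applied to $(r_n')$, that $(r_n')$ inherits the hypotheses of Lemma~\ref{uniqueulemma} from $(r_n)$ (they agree in the tail), and hence that $P(L_T(Y)=n)=r_n'$; a downward induction from $P(T=n)\leqslant s_n$ then gives $r_n'\leqslant r_n$ for $n\geqslant2$, i.e.\ $L_T(Y)\preceq\nu_0$. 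Since $L$ is monotone in its starting number of lineages, $L_{W_1+W_2}(Y)\preceq L_T(Y)\preceq\nu_0$, which is (\ref{Gdedown}), and $\nu_0$ has infinite mean because $R'(1)=\infty$.
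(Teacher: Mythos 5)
The step you identify as ``the main obstacle'' --- proving $d_n>0$ for the finitely many small $n$ --- is a genuine gap in your plan, and it cannot be closed inside the family $R(x)=(1-a-\varepsilon a)x+a\bigl(1-(1-x)^{\alpha_c}\bigr)+\varepsilon a\bigl(1-(1-x)^\beta\bigr)$. Already at $n=2$ the argument breaks. If $a$ is small, $\tilde{r}_2$ contains $(1-a-\varepsilon a)^2\approx1$, giving $d_2\leq d_2'-(1-a-\varepsilon a)^2<0$ --- an $O(1)$ deficit that the $O(a)$ and $O(a^2)$ positive contributions cannot absorb. If instead you shrink the linear coefficient (forcing $a(1+\varepsilon)\approx1$), then $d_2$ is governed by $\frac{a}{c}\zeta_{\alpha_c,2}\gamma_{\alpha_c,2}+\frac{\varepsilon a}{c}(\zeta_{\beta,2}+c_\beta-c)\gamma_{\beta,2}$ plus uniformly bounded quadratic terms; but $\zeta_{\alpha_c,2}$ and $\zeta_{\beta,2}$ can be very negative while the hypotheses hold (they scale roughly like $-E[1/X]$, which is only assumed finite), so no single scheme for choosing $\beta,a,\varepsilon$ gives $d_2>0$ across all admissible $\Lambda$. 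Note also that your window $\beta\in(\alpha_c,(1+\alpha_c-p)\wedge1)$ drops the constraints $\beta<2\alpha_c$ and $\beta<1/2$ which the paper needs; the second is exactly why the lemma carries the extra hypothesis $0<c<E[(1-(1-X)^{1/2})/X^2]$ (i.e.\ $\alpha_c<1/2$), which ensures $2\alpha_c,\alpha_c+\beta,2\beta\in(0,1)$ so the coefficients $\gamma_{2\alpha_c,n},\gamma_{\alpha_c+\beta,n},\gamma_{2\beta,n}$ have the sign your decomposition tacitly assumes.

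The missing idea is to use a completely different $R$: a normalised, truncated sum of two Sibuya-type generating functions,
$$R(x)=\frac{\bigl(1-(1-x)^{\alpha_c}\bigr)-\sum_{i<k}\gamma_{\alpha_c,i}x^i+\bigl(1-(1-x)^{\beta}\bigr)-\sum_{i<k}\gamma_{\beta,i}x^i}{\sum_{i\geq k}\gamma_{\alpha_c,i}+\sum_{i\geq k}\gamma_{\beta,i}},$$
so that $r_n=0$ for $n<k$ and hence $\tilde{r}_n=0$ for $n<2k$. Then every index at which one must show $d_n>0$ is $\geq k$, where the asymptotics $\zeta_{\alpha,n}\to0$ and the hypothesis (\ref{downtbound}) (through $\beta<\alpha_c+1-p$, which makes $\zeta_{\alpha_c,n}\gamma_{\alpha_c,n}=o(\gamma_{\beta,n})$) can be brought to bear; one simply chooses $k$ so large that $\zeta_{\alpha_c,n}\gamma_{\alpha_c,n}+\zeta_{\beta,n}\gamma_{\beta,n}+(c_\beta-c)\gamma_{\beta,n}>0$ for all $n\geq k$, and there are no small $n$ left to worry about. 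The remaining differences between your outline and the paper's proof --- the paper works with tail sums of $s_n$ (getting $\sum_{n\geq k}s_n\geq2$ directly, rather than your prefix-sum argument from $\sum_n d_n=0$ and $d_1<0$) and takes $\nu_0$ to be the law of $L_T(Y)$ rather than the law with pgf $R$ --- are secondary next to the truncation, which is what makes the sign analysis go through.
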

\begin{proof}
We will show that there exists a function $R(x)$, which is infinitely differentiable at $x=0$ and has the following property. Suppose the Taylor series of the function $R^2(x)$ that centered at $x=0$ is $R^2(x)=\sum_{n=1}^\infty \tilde{r}_n x^n$. Suppose the Taylor series of the function $S(x)$ that satisfies $(\ref{SRPGF})$ centered at $x=0$ is $S(x)=\sum_{n=1}^\infty s_n x^n$. Then there exists some $M\in\mathbb{N}^+$ such that $\sum_{i=n}^\infty s_i\geqslant \sum_{i=n}^\infty \tilde{r}_i$ for all $n\geqslant M$. We will then use this property to show that this distribution satisfies $(\ref{Gdedown})$. 
Since $0<c< E[(1-(1-X)^{1/2})/X^2]$, we have $\alpha_c<1/2$.
Recall from $(\ref{def11xalpha1})$ that for $\alpha>0$, we have $1-(1-x)^\alpha=\sum_{n=1}^\infty\gamma_{\alpha,n} x^n$.
Let
\begin{equation}\label{lemma29rk}
R(x)=\frac{1}{\sum_{i=k}^\infty\gamma_{\alpha_c,i}+\sum_{i=k}^\infty\gamma_{\beta,i}}\left((1-(1-x)^{\alpha_c})-\sum_{i=1}^{k-1}\gamma_{\alpha_c,i}x^i+(1-(1-x)^\beta)-\sum_{i=1}^{k-1}\gamma_{\beta,i}x^i\right),
\end{equation}
where 
$$\beta\in\left(\alpha_c,\min\{\alpha_c+1-p,2\alpha_c,1/2\}\right),$$ 
and $k$ is a positive integer which will be chosen later.
Then
\begin{equation}\label{lemma29orderbeta}
	0<\alpha_c<\beta<2\alpha_c<\alpha_c+\beta<2\beta<1.
\end{equation}

Firstly, we will derive the Taylor series of $R(x)$, $R^2(x)$ and $S(x)$. The Taylor series for the function $R(x)$ centered at $x=0$ is 
\begin{equation*}\label{taylorofRdown1change}
	R(x)=\sum_{n=1}^\infty r_n x^n,
\end{equation*}
where
\begin{equation*}\label{taylorofRdown2change}
r_n=
\begin{cases}	
0, & n<k,\\
\frac{1}{\sum_{i=k}^\infty\gamma_{\alpha_c,i}+\sum_{i=k}^\infty\gamma_{\beta,i}}\left(\gamma_{\alpha_c,n}+\gamma_{\beta,n}\right), & n\geqslant k.
\end{cases}
\end{equation*}
Let 
\begin{equation}\label{Sx1downchange}
S(x)=\frac{1}{c}E\left[\frac{R(x)-R((1-X)x)}{X^2}\right]-\frac{x}{c}E\left[\frac{R(X+(1-X)x)-R((1-X)x)}{X^2}\right]+R(x).	
\end{equation}
We can do a calculation similar to what we did in Lemma \ref{lemmaincrease} to derive the Taylor series for $S(x)$ and $R^2(x)$. The Taylor series for the function $S(x)$ centered at $x=0$ is
\begin{align*}
	S(x)=\sum_{n=1}^\infty s_n x^n,
\end{align*}
where for $n\geqslant k$, we have
\begin{align}\label{sndownenalphac}
s_n=&\frac{1}{\sum_{i=k}^\infty\gamma_{\alpha_c,i}+\sum_{i=k}^\infty\gamma_{\beta,i}}\Bigg(\frac{1}{c}E\left[\frac{1-(1-X)^n-\frac{n(1-X)^{\alpha_c}}{n-1-\alpha_c}+\frac{n(1-X)^{n-1}}{n-1-\alpha_c}}{X^2}\right]\gamma_{\alpha_c,n}\nonumber\\
&\quad\quad\quad\quad+\frac{1}{c}E\left[\frac{1-(1-X)^n-\frac{n(1-X)^{\beta}}{n-1-\beta}+\frac{n(1-X)^{n-1}}{n-1-\beta}}{X^2}\right]\gamma_{\beta,n}+\gamma_{\alpha_c,n}+\gamma_{\beta,n}\Bigg).
\end{align}
For $R(x)^2$, after taking square of the right-hand side of (\ref{lemma29rk}), we get that
the Taylor series centered at $x=0$ is
\begin{align*}
	R(x)^2&=\sum_{n=1}^\infty \tilde{r}_n x^n,
\end{align*}
where
\begin{align*}\label{rndown1}
\tilde{r}_n=
\begin{cases}
	0, & n<2k,\\
	\frac{1}{(\sum_{i=k}^\infty\gamma_{\alpha_c,i}+\sum_{i=k}^\infty\gamma_{\beta,i})^2}\bigg(4\gamma_{\alpha_c,n}+4\gamma_{\beta,n}-2\sum_{i=1}^{k-1}\gamma_{\alpha_c,i}\gamma_{\alpha_c,n-i}-2\sum_{i=1}^{k-1}\gamma_{\alpha_c,i}\gamma_{\beta,n-i}\\
	\quad\quad-2\sum_{i=1}^{k-1}\gamma_{\beta,i}\gamma_{\alpha_c,n-i}-2\sum_{i=1}^{k-1}\gamma_{\beta,i}\gamma_{\beta,n-i}-\gamma_{2\alpha_c,n}-2\gamma_{\alpha_c+\beta,n}-\gamma_{2\beta,n}\bigg), & n\geqslant2k.
\end{cases}
\end{align*}
For $n\geqslant 2k$, we can rewrite $\tilde{r}_n$ as
\begin{align*}
\tilde{r}_n&= \frac{2\gamma_{\alpha_c,n}+2\gamma_{\beta,n}}{\sum_{i=k}^\infty\gamma_{\alpha_c,i}+\sum_{i=k}^\infty\gamma_{\beta,i}}+\frac{1}{(\sum_{i=k}^\infty\gamma_{\alpha_c,i}+\sum_{i=k}^\infty\gamma_{\beta,i})^2}\bigg(2\sum_{i=1}^{k-1}\gamma_{\alpha_c,i}(\gamma_{\alpha_c,n}-\gamma_{\alpha_c,n-i})\\
&\quad\quad+2\sum_{i=1}^{k-1}\gamma_{\alpha_c,i}(\gamma_{\beta,n}-\gamma_{\beta,n-i})+2\sum_{i=1}^{k-1}\gamma_{\beta,i}(\gamma_{\alpha_c,n}-\gamma_{\alpha_c,n-i})+2\sum_{i=1}^{k-1}\gamma_{\beta,i}(\gamma_{\beta,n}-\gamma_{\beta,n-i})\\
&\quad\quad-\gamma_{2\alpha_c,n}-2\gamma_{\alpha_c+\beta,n}-\gamma_{2\beta,n}\bigg).
\end{align*}

Our next goal is to show that we can find $k>0$ such that 
\begin{equation}\label{diffsrdown2change}
 \sum_{i=n}^\infty s_i> \sum_{i=n}^\infty \tilde{r}_i, \text{ and } s_n>0, ~\forall n\geqslant k, \text{ and } \sum_{n=k+1}^\infty s_n> 1.
\end{equation}
Let 
\begin{equation*}\label{Diffsrdownchange}
d_n\coloneqq s_n-\tilde{r}_n,~\forall n\in\mathbb{N}^+. 
\end{equation*} 
Let 
$$c_{\beta}\coloneqq E\left[\frac{1-(1-X)^{\beta}}{X^2}\right].$$ 
Note that
$$\gamma_{\alpha,n+1}=\frac{n-\alpha}{n+1}\gamma_{\alpha,n}<\gamma_{\alpha,n},~\forall n\geqslant1, ~\forall\alpha\in(0,1),$$
which implies $\gamma_{\alpha,n}-\gamma_{\alpha,n-i}<0$ for all $\alpha\in(0,1)$.
Recall from (\ref{lemma29orderbeta}) that $\alpha_c,\beta,2\alpha_c,\alpha_c+\beta$ and $2\beta$ are all in $(0,1)$, which implies $\gamma_{\alpha_c,n}$, $\gamma_{\beta,n}$, $\gamma_{2\alpha_c,n}$, $\gamma_{\alpha_c+\beta,n}$, $\gamma_{2\beta,n}$ are all positive for all $n\geqslant1$. Recall the definition of $\zeta_{\alpha,n}$ from $(\ref{defzetal61})$. 
Then
\begin{align}\label{Dndown1change}
d_n\geqslant
\begin{cases}
	\frac{1}{\sum_{i=k}^\infty\gamma_{\alpha_c,i}+\sum_{i=k}^\infty\gamma_{\beta,i}}(\frac{1}{c}\zeta_{\alpha_c,n}\gamma_{\alpha_c,n}+\frac{1}{c}\zeta_{\beta,n}\gamma_{\beta,n}+\frac{1}{c}(c_{\beta}-c)\gamma_{\beta,n}+2\gamma_{\alpha_c,n}+2\gamma_{\beta,n}),&k\leqslant n<2k,\\
	\frac{1}{\sum_{i=k}^\infty\gamma_{\alpha_c,i}+\sum_{i=k}^\infty\gamma_{\beta,i}}\left(\frac{1}{c}\zeta_{\alpha_c,n}\gamma_{\alpha_c,n}+\frac{1}{c}\zeta_{\beta,n}\gamma_{\beta,n}+\frac{1}{c}(c_{\beta}-c)\gamma_{\beta,n}\right),&n\geqslant 2k.
\end{cases}
\end{align}
By $(\ref{zeta223})$, we have
$$-\frac{1+\alpha}{n-1-\alpha}E\left[\frac{1-(1-X)^n}{X^2}\right]
\leqslant \zeta_{\alpha,n}\leqslant \frac{1+\alpha}{n-1-\alpha}E\left[\frac{1-(1-X)^\alpha}{X^2}\right],~\forall n>1+\alpha.$$
It then follows from $(\ref{downtbound})$ that
$$n^{1-p}\zeta_{\alpha,n}\to 0,\text{ as } n\to\infty.$$
By ($\ref{gammasimre}$), we have $\gamma_{\alpha_c,n}/\gamma_{\beta,n}=O(n^{\beta-\alpha_c})$.
Since $\beta<\alpha_c+1-p$, we have
\begin{equation*}\label{limiten1change}
	\frac{\zeta_{\alpha_c,n}\gamma_{\alpha_c,n}}{\gamma_{\beta,n}}\to 0, \text{ as }n\to\infty.
\end{equation*}
Since $E[(1-(1-X)^\alpha)/X^2]$ increases as $\alpha>0$ increases and $0<\alpha_c<\beta$, we have $c<c_{\beta}$.
Recall from $(\ref{zetato0})$ that $\zeta_{\alpha,n}\to0$ as $n\to\infty$ for all $\alpha\in(0,1)$. Then there exists $k\in\mathbb{N}^+$ such that 
\begin{equation}\label{bounddowncase12change}
\zeta_{\alpha_c,n}\gamma_{\alpha_c,n}+\zeta_{\beta,n}\gamma_{\beta,n}+(c_{\beta}-c)\gamma_{\beta,n}>0,~\forall n\geqslant k.
\end{equation}
We will choose this value of $k$ to use in (\ref{lemma29rk}).
By (\ref{Dndown1change}) and (\ref{bounddowncase12change}), we have 
$$d_n> 0, ~\forall n\geqslant k.$$
By (\ref{sndownenalphac}) and (\ref{bounddowncase12change}), we have 
\begin{equation}\label{usedinproofoftheo6}
	s_n\geqslant \frac{2\gamma_{\alpha_c,n}+2\gamma_{\beta,n}}{\sum_{i=k}^\infty\gamma_{\alpha_c,i}+\sum_{i=k}^\infty\gamma_{\beta,i}},~\forall n\geqslant k,
\end{equation}
which implies
$$\sum_{n=k}^\infty s_n\geqslant \frac{\sum_{n=k}^\infty(2\gamma_{\alpha_c,n}+2\gamma_{\beta,n})}{\sum_{i=k}^\infty\gamma_{\alpha_c,i}+\sum_{i=k}^\infty\gamma_{\beta,i}}=2.$$
The result $(\ref{diffsrdown2change})$ follows.

Now we are going to construct a distribution which satisfies $(\ref{Gdedown})$. Note that $\sum_{n=1}^\infty r_n=1$ and  $(1-x)R'(x)=O((1-x)^{\alpha_c})$ as $x\to1^-$, which implies $(1-x)R'(x)$ is bounded on $[0,1)$. Then $\sum_{n=1}^\infty s_n =1$ because of Lemma \ref{lemmasrfunction} and Lemma \ref{sumsn1lemma}. By $(\ref{diffsrdown2change})$, there exists $M_1\geqslant k$ such that 
$$\sum_{n=M_1+1}^\infty s_n \leqslant 1 \text{ and } \sum_{n=M_1}^\infty s_n > 1.$$
Let $T$ be a random variable taking its values in $\mathbb{N}^+$ with 
\begin{equation}\label{tdistdown1change}
P(T=n)=
\begin{cases}
	0, & n<M_1,\\
	1-\sum_{i=M_1+1}^\infty s_i, & n=M_1,\\
	s_n, &n>M_1.
\end{cases}	
\end{equation}
Let $\{r_n'\}_{n=1}^\infty$ be a sequence of real numbers such that $r_n'=r_n$ for all $n>M_1$, and
\begin{equation}\label{r'inl6newl7} 
r_n'=\frac{c}{\lambda_n+c}P(T=n)+\frac{1}{\lambda_n+c}\sum_{i=n+1}^{\infty} \binom{i}{i-n+1}\lambda_{i, i-n+1}r_i',~\forall  n\leqslant M_1.
\end{equation} 
It follows from $(\ref{Sx1downchange})$ and Lemma \ref{lemmasrfunction} that
\begin{equation*}\label{r'inl6new2}
r_n=\frac{c}{\lambda_n+c}s_n+\frac{1}{\lambda_n+c}\sum_{i=n+1}^{\infty} \binom{i}{i-n+1}\lambda_{i, i-n+1}r_i, ~\forall n\in\mathbb{N}^+.
\end{equation*}
Since $r_n'=r_n$ and $P(T=n)=s_n$ for all $n>M_1$, we have 
\begin{equation}\label{r'inl6newl788} 
 r_n'=\frac{c}{\lambda_n+c}P(T=n)+\frac{1}{\lambda_n+c}\sum_{i=n+1}^{\infty} \binom{i}{i-n+1}\lambda_{i, i-n+1}r_i',~\forall  n>M_1.
 \end{equation}
By (\ref{gammasimre}), we have
$$\frac{r_{n-1}-r_n}{n^{-2-\alpha_c}}\to\frac{1}{\sum_{i=k}^\infty\gamma_{\alpha_c,i}+\sum_{i=k}^\infty\gamma_{\beta,i}}\frac{1}{\Gamma(-1-\alpha_c)},~\text{as }n\to\infty.$$ 
Therefore, by Lemma \ref{uniqueulemma}, (\ref{r'inl6newl7}) and (\ref{r'inl6newl788}), we have
$$P(L_T(Y)=n)=r_n', ~\forall n\in\mathbb{N}^+.$$
Since $P(T=n)=0< s_n$ for all $n\in\{k,\ldots,M_1-1\}$ and
$$P(T=M_1)=1-\sum_{n=M_1+1}^\infty s_n<\sum_{n=M_1}^\infty s_n-\sum_{n=M_1+1}^\infty s_n =s_{M_1},$$  
we have $r_n'\leqslant r_n$ for all $n\in\{k,\ldots,M_1\}$. 
It follows that $P(L_T(Y)=n)=r_n$ for all $n>M_1$, and $P(L_T(Y)=n)\leqslant r_n$ for all $n\in\{k,\ldots,M_1\}$. Also, $P(L_T(Y)=n)\geqslant0=r_n$ for all $n<k$. Let $W$ be a random variable with probability mass function $P(W=n)=r_n$. Then 
$$ L_T(Y)\preceq W.$$
Let $T'$ be a random variable that has the same distribution as $T$. Let $Y',Y''\sim \text{Exp}(c)$. Let $T$, $T'$, $Y$, $Y'$ and $Y''$ all be independent of one another. Then $L_T(Y)$ and $L_{T'}(Y')$ are independent and identically distributed. Let $W'$ be a random variable that has the same distribution as $W$ and is independent of $W$. Then
$$ L_T(Y)+L_{T'}(Y')\preceq W+W',$$
which implies 
$$P(L_T(Y)+L_{T'}(Y')\geqslant n)\leqslant P(W+W'\geqslant n)=\sum_{i=n}^\infty\tilde{r}_i.$$
By $(\ref{diffsrdown2change})$ and (\ref{tdistdown1change}), we have 
$$\sum_{i=n}^\infty\tilde{r}_i\leqslant \sum_{i=n}^{\infty}s_i=\sum_{i=n}^\infty P(T=i),~\forall n>M_1.$$
For $n<M_1$, we have
$$P(L_T(Y)+L_{T'}(Y')= n)\geqslant 0=P(T=n),~\forall n\in\{1,\ldots, M_1-1\}.$$
Therefore, we have $L_T(Y)+L_{T'}(Y')\preceq T$, which implies $L_{L_T(Y)+L_{T'}(Y')}(Y'')\preceq L_T(Y)$. Let $\nu_0$ be the distribution of $L_T(Y)$. Then 
$$G_c(\nu_0)\preceq \nu_0.$$
The distribution $\nu_0$ has infinite mean because $P(L_T(Y)=n)=r_n$ for all $n>M_1$ and $\sum_{n=1}^\infty nr_n$ diverges.
\end{proof}

\subsection{Proof of Theorem \ref{Theoconvlambda1}}
\begin{proof}[Proof of Theorem \ref{Theoconvlambda1}]
Recall from $(\ref{taylorofRdown2})$ in the proof of Lemma $\ref{lemmaincrease}$ that there exists a distribution $\mu_0$ with
\begin{equation*}
\mu_0(\{n\})=
\begin{cases}	
1-a+\varepsilon a+a\gamma_{\alpha_c,1}-\varepsilon a \gamma_{\beta,1}, & n=1,\\
a\gamma_{\alpha_c,n}-\varepsilon a \gamma_{\beta,n}, & n\geqslant 2,
\end{cases}
\end{equation*}
where $\beta\in(\alpha_c, 2\alpha_c\wedge 1)$ and $a$ and $\varepsilon$ are sufficiently small, such that $G(\mu_0)\succeq \mu_0$. The convolution of $\mu_0$ and itself is $\left(\mu_0\ast\mu_0\right)(\{n\})=\tilde{r}_n$, where $\tilde{r}_n$ was given in (\ref{eq123lemma281}) and (\ref{eq22slemma282}).
Recall that $\gamma_{\alpha,n}>0$ for all $n\in\mathbb{N}^+$ when $0<\alpha<1$, and $\gamma_{\alpha_2,n}/\gamma_{\alpha_1,n}\to 0$ as $n\to\infty$ for $0<\alpha_1<\alpha_2$, $\alpha_1\notin\mathbb{Z}$. Then for some positive constant $\ell_1$, we have
 $$ \left(\mu_0\ast\mu_0\right)(\{n,n+1,\ldots\})\leqslant \sum_{i=n}^\infty \ell_1a\gamma_{\alpha_c,i},~ \forall n\in\mathbb{N}^+.$$
Recall from (\ref{usedinproofoftheo6})  and (\ref{tdistdown1change}) in the proof of Lemma $\ref{lemmadecrease}$ that
there exists a random variable $T$ with 
$$ P(T\geqslant n)\geqslant \min\left\{1,\frac{\sum_{i= n}^\infty(2\gamma_{\alpha_c,i}+2\gamma_{\beta,i})}{\sum_{i=k}^\infty\gamma_{\alpha_c,i}+\sum_{i=k}^\infty\gamma_{\beta,i}} \right\},\forall n\geqslant 1,$$
where $\beta\in(\alpha_c,\min\{\alpha_c+1-p,2\alpha_c,1/2\})$ and $k$ is sufficiently large, such that the distribution of $L_T(Y)$, which will be denoted by $\nu_0$, satisfies $G_c(\nu_0)\preceq \nu_0$. By choosing $a$ small enough such that $\ell_1 a<2$ and choosing $k$ large enough such that $\sum_{i=k}^\infty\gamma_{\alpha_c,i}+\sum_{i=k}^\infty\gamma_{\beta,i}<1$, we have
$$P(T\geqslant n)\geqslant \left(\mu_0\ast\mu_0\right)(\{n,n+1,\ldots\}),~\forall n\in\mathbb{N}^+,$$
which will imply 
\begin{equation}\label{succeq1}
\nu_0\succeq G_c(\mu_0).
\end{equation}
Since $G_c(\mu_0)\succeq \mu_0$ and $G_c$ is monotone, we have that $\{G_c^n(\mu_0)\}_{n=1}^\infty$ is an increasing sequence of distributions and $\lim_{n\to\infty}G_n(\mu_0)$ exists. Similarly, we have that $\{G_c^n(\nu_0)\}_{n=1}^\infty$ is a decreasing sequence of distributions and $\lim_{n\to\infty}G_c^n(\nu_0)$ exists. By $(\ref{succeq1})$ and the monotonicity of $G$, we have
$$\nu_0\succeq \lim_{n\to\infty}G_c^n(\nu_0)\succeq \lim_{n\to\infty}G_c^n(\mu_0)\succeq \mu_0.$$
By a similar argument to the one in (\ref{lemma18proof111}) in Lemma \ref{lemma18nup1}, we have that the limits $\lim_{n\to\infty}G_c^n(\mu_0)$ and $\lim_{n\to\infty}G_c^n(\nu_0)$ are solutions to (\ref{theoremequ1}).
The result follows because distributions $\lim_{n\to\infty}G_c^n(\mu_0)$ and $\lim_{n\to\infty}G_c^n(\nu_0)$ are in $\mathcal{S}$. 
\end{proof}

\section*{Acknowledgements}
The author would like to convey most sincere gratitude and appreciation to Professor Jason Schweinsberg for his guidance, patience and encouragement. His methodological support, constructive suggestions and valuable feedback have played an important role in the development of this paper.

\bibliographystyle{abbrv}
\bibliography{references.bib}

\end{document}